\tikzset{>=latex}
\title{Topological Hecke eigenforms.}
\date{August 2020}
\author{L. Candelori}
\author{A. Salch}
\newtheorem{prop}{Proposition}[section]
\newtheorem{lemma}[prop]{Lemma}
\newtheorem{definition-proposition}[prop]{Definition-Proposition}
\newtheorem{theorem}[prop]{Theorem}
\newtheorem{corollary}[prop]{Corollary}
\theoremstyle{definition}
\newtheorem{remark}[prop]{Remark}
\newtheorem{definition}[prop]{Definition}
\newtheorem{example}[prop]{Example}
\newtheorem{observation}[prop]{Observation}
\renewcommand{\smash}{\wedge}
\DeclareMathOperator{\ev}{{\rm ev}}
\DeclareMathOperator{\pt}{{\rm pt.}}
\DeclareMathOperator{\op}{{\rm op}}
\DeclareMathOperator{\Spec}{{\rm Spec}}
\DeclareMathOperator{\tmf}{{\it tmf}}
\DeclareMathOperator{\Tmf}{{\it Tmf}}
\DeclareMathOperator{\im}{{\rm im}}
\DeclareMathOperator{\Ext}{{\rm Ext}}
\DeclareMathOperator{\Cotor}{{\rm Cotor}}
\DeclareMathOperator{\Ell}{{\it Ell}}
\DeclareMathOperator{\elll}{{\it ell}}
\DeclareMathOperator{\Mod}{{\rm Mod}}
\DeclareMathOperator{\cof}{{\rm cof}}
\DeclareMathOperator{\gr}{{\rm gr}}
\DeclareMathOperator{\et}{{\rm et}}
\DeclareMathOperator{\id}{{\rm id}}
\DeclareMathOperator{\Gal}{{\rm Gal}}
\DeclareMathOperator{\GL}{{\rm GL}}
\DeclareMathOperator{\Tr}{{\rm Tr}}
\DeclareMathOperator{\Frob}{{\rm Frob}}
\newcommand{\Qbar}{\overline{\mathbb{Q}}}
\newcommand{\tb}[1]{\textcolor{blue}{#1}}
\newcommand{\tr}[1]{\textcolor{red}{#1}}
\newcommand{\QQ}{\mathbb{Q}}
\newcommand{\ZZ}{\mathbb{Z}}
\begin{document}

\begin{abstract}
We study the eigenforms of the action of A. Baker's Hecke operators on the holomorphic elliptic homology of various topological spaces. 
We prove a multiplicity one theorem (i.e., one-dimensionality of the space of these ``topological Hecke eigenforms'' for any given eigencharacter) for some classes of topological spaces, and we give examples of finite CW-complexes for which multiplicity one fails. 
 We also develop some abstract ``derived eigentheory'' whose motivating examples arise from the failure of classical Hecke operators to commute with multiplication by various Eisenstein series. Part of this ``derived eigentheory'' is an identification of certain derived Hecke eigenforms as the obstructions to extending topological Hecke eigenforms from the top cell of a CW-complex to the rest of the CW-complex. Using these obstruction classes together with our multiplicity one theorem, we calculate the topological Hecke eigenforms explicitly, in terms of pairs of classical modular forms, on all $2$-cell CW complexes obtained by coning off an element in $\pi_n(S^m)$ which stably has Adams-Novikov filtration $1$.
\end{abstract}
\maketitle

\tableofcontents

\section{Introduction.}
\label{Introduction.}

In the study of classical modular forms, Hecke operators play a fundamental role: for example, if a normalized cusp form $f$ is an eigenform for the action of the Hecke operators, then the $n$th coefficient in the $q$-expansion of $f$ is equal to the eigenvalue of the Hecke operator $T_n$ on $f$. Therefore Hecke operators can be used to access the coefficients of interesting generating series (e.g. the number of points of a rational elliptic curve mod $p$) and can be used to construct Galois representations associated to modular forms. Much of modern algebraic number theory is concerned with the study of such Galois representations.

Meanwhile, elliptic homology $\Ell_*$ is a generalized homology theory with the property that, when evaluated on the $0$-sphere, $\Ell_*(S^0)$ is the classical graded ring of weakly holomorphic modular forms over $\mathbb{Z}[\frac{1}{6}]$ of level one. 
In \cite{MR1037690}, A. Baker constructed certain operations $\tilde{T}_2, \tilde{T}_3, \tilde{T}_4,\tilde{T}_5, \dots$ on elliptic homology, with the property that, when evaluated on $S^0$, Baker's operations agree with the classical Hecke operators $T_2, T_3, T_4, T_5, \dots$ on modular forms. A straightforward modification of Baker's argument yields ``topological Hecke operators'' on holomorphic elliptic homology $\elll_*$, which, when evaluated on the zero-sphere $S^0$, agree with the classical Hecke action on holomorphic modular forms over $\mathbb{Z}[\frac{1}{6}]$ of level $1$.
Naturally, one wants to know how these ``topological Hecke operators'' act on the holomorphic elliptic homology of topological spaces other than spheres\footnote{In \cite{MR1037690}, Baker remarks ``Because our operations are merely additive (and not multiplicative...) they appear to be hard to compute explicitly except in a few simple situations.'' In A. Ranicki's review of \cite{MR1037690} on the AMS's Mathematical Reviews service, Ranicki includes that quotation from Baker's paper, evidently to emphasize that it indeed seems difficult to calculate Baker's topological Hecke operators. Part of the purpose of this paper is to remedy this situation, by demonstrating how to calculate the action of Baker's topological Hecke operators on the elliptic homology of various CW-complexes---specifically, those whose attaching maps are identifiable as classes in the Adams-Novikov spectral sequence with known cocycle representatives in the cobar complex for Brown-Peterson homology---and giving explicit results in a certain reasonable class of examples.}, and in particular, one wants a calculation of the eigenforms of the topological Hecke action on the elliptic homology of various topological spaces $X$. Ideally, such a calculation would let us translate between topological properties of $X$ and number-theoretic properties of the topological Hecke eigenforms over $X$.

We carry out such a calculation in this paper. Here is a synopsis of the main ideas and results:
\begin{itemize}

\item \Cref{Derived eigentheory.} is purely algebraic, and offers a kind of ``derived eigentheory.'' The idea is that, if a ring $A$ acts on some module $V$, then for some ring homomorphism $\lambda: A \rightarrow \mathcal{O}_K$ into a ring of algebraic integers (i.e. an {\em eigencharacter of $A$}), the eigenvectors for the action of $A$ on $V$ with eigencharacter $\lambda$ are given by a certain Hochschild cohomology group $HH^0(A; V^{\lambda})$ defined in Definition \ref{bimodules definition 1}. We then define the ``derived eigenvectors'' with eigencharacter $\lambda$ as the elements of $HH^n(A; V^{\lambda})$ for $n>0$, although in this paper we are almost exclusively concerned with the case $n=1$. 
We present this theory of derived eigenvectors in a general algebraic setting, although its cogent case is when $A$ is an abstract Hecke algebra of level one Hecke operators and $V$ a graded ring of level one holomorphic modular forms. 

Definition-Proposition \ref{def of dot-cup product} produces certain operations on derived eigenvectors, given by taking the ``dot-cup'' product with certain elements of $HH^n(A; \hom_R(M_*,M_*))$. Natural elements of $HH^1(A; \hom_R(M_*,M_*))$ are produced in Definition-Propositions \ref{def of delta} and \ref{def of kappas}. Specializing to the case of modular forms, for each odd prime $p$ and each positive integer $n$, we get a cohomology class $\kappa^{E_{p-1}}_n\in HH^1(A; \hom_R(M_*,M_*))$ which measures the failure of each Hecke operator $\tilde{T}_{\ell}$ to commute with the $R$-module map given by multiplication by the $n$th power of the Eisenstein series $E_{p-1}$. In Theorem \ref{nontriviality of all kappas}, we show that $\kappa^{E_{p-1}}_n$ is in general nontrivial, and generates a subgroup of $HH^1(A; \hom_R(M_*,M_*))$ of order $p^{1+\nu_p(n)}$, where $\nu_p$ denotes $p$-adic valuation.

\item In \cref{Review of established...} we give a very brief review of holomorphic and weakly holomorphic elliptic homology, their relations to various versions of the spectrum of topological modular forms, and Baker's topological Hecke operations on them.

\item In \cref{Topological Hecke eigenforms and...}, we define topological Hecke eigenforms, and the multiplicity one condition for topological Hecke eigenforms. Proposition \ref{multiplicity one is convenient} shows that, if $X$ has multiplicity one for some set of primes $P$ and $f\in \elll_{2k}(X)\otimes_{\mathbb{Z}}\mathcal{O}_{\overline{\mathbb{Q}}}[P^{-1}]$ is an eigenform for the action of $\tilde{T}_{\ell}$ for all primes $\ell\in P$, then $f$ is an eigenform for the action of $\tilde{T}_n$ for all positive integers $n$ whose prime factors are in $P$. 

Multiplicity one is a well-known result for classical level $1$ modular forms, but its topological analogue holds over some topological spaces and does not hold over others. One reason why this matters is as follows. In general, in the absence of a multiplicity one result, an element of elliptic homology which is an eigenform for the action of $\tilde{T}_{\ell}$ for all primes $\ell\in P$ is not guaranteed to be an eigenform for the action of $\tilde{T}_{\ell^2}$, for example, since the topological Hecke operators satisfy $\tilde{T}_{\ell^{r+2}} = \tilde{T}_{\ell}\tilde{T}_{\ell^{r+2}} - \frac{1}{\ell}\Psi^{\ell}\tilde{T}_{\ell^r}$, and $f$ might not be an eigenform for the action of the Adams operation $\Psi^{\ell}$. This subtlety with Adams operations does not arise in the classical number-theoretic context of modular forms, where $\Psi^{\ell}$ always acts on weight $k$ modular forms simply as multiplication by $\ell^k$.
\item
Theorem \ref{multiplicity one thm} establishes that, if $X$ is a finite CW-complex with torsion-free homology and at most one cell in each dimension, and $P$ is a cofinite set of primes, then $X$ has the multiplicity one property for Hecke operators whose prime factors are in $P$. 
\item In Proposition \ref{examples of hecke eigenforms}, we calculate the topological Hecke eigenforms over any finite wedge product of spheres. Suppose that $X$ is a finite wedge product of spheres and $P$ is a cofinite set of primes (i.e, $P$ is the complement of some finite set of primes), and write $V_X(\lambda)_*$ for the graded $\mathbb{Z}[P^{-1}]$-module of topological Hecke eigenforms over $X$, for all primes in $P$, with eigencharacter $\lambda$.
Write $V(\lambda)_*$ for the graded $\mathbb{Z}[P^{-1}]$-submodule of $M_*\otimes_{\mathbb{Z}} \mathbb{Z}[P^{-1}]$ consisting of eigenforms for the action of $T_p$, for all $p\in P$, with eigencharacter $\lambda$. Write $D(V(\lambda)_*)$ for $V(\lambda)_*$ with all grading degrees doubled. Proposition \ref{examples of hecke eigenforms} shows that
we have an isomorphism of graded $\mathbb{Z}[P^{-1}]$-modules
$V_X(\lambda)_* \cong D(V(\lambda)_{*})\otimes_{\mathbb{Z}} \tilde{H}_{*}(X;\mathbb{Z})\otimes_{\mathbb{Z}} \mathbb{Z}[P^{-1}]$.
\item In \cref{Extending a Hecke eigenform...}, we study the problem of extending a topological Hecke eigenform from the top cell of a $2$-cell CW-complex to a topological Hecke eigenform on the whole complex. (The opposite problem, of extending a topological Hecke eigenform from the bottom cell to the whole complex, is trivial: every topological Hecke eigenform on the bottom cell extends uniquely to the whole complex.) In particular, in Corollary \ref{main thm cor 123409}, we show that, if $X$ is the homotopy cofiber of the map $p^jv_1^n\alpha_1 \in \pi_{(2p-2)(n+1) - 1}(S^0)$ for some prime $p$ and positive integer $n$ and some nonnegative integer $j$ (see the next paragraph for some explanation of what $v_1^n\alpha_1$ is), and if $f$ is a topological Hecke eigenform on the top cell of $X$, then $f$ extends to a topological Hecke eigenform on all of $X$ if and only if the derived Hecke eigenform $f\cupdot p^j \kappa^{E_{p-1}}_n\in HH^1\left(A; M_{k+(p-1)n}^{\lambda}\right)$ is trivial. Here $\cupdot$ is the ``dot-cup'' product on Hochschild cohomology, defined in Definition \ref{def of dot-cup product}, and the cohomology class $\kappa^{E_{p-1}}_n$ is the one described above, which measures the failure of Hecke operators to commute with multiplication by $E_{p-1}^n$. 
\item
Theorem \ref{main thm 304jf} then uses the obstruction theory and the multiplicity one theorems described above, together with a Hochschild cohomology calculation carried out in Proposition \ref{nonvanishing of dotcup 0}, to 
give a calculation of all topological Hecke eigenforms over $2$-cell CW-complexes of a certain natural type. This ``natural type'' deserves some explanation. Every connected CW-complex $X$ with two cells is built by attaching an $n$-cell to an $m$-cell with $n\geq m$; the homotopy type of that CW-complex is determined by the homotopy class of the map from the boundary $\delta e^n \cong S^{n-1}$ of the $n$-cell $e^n$ to the $m$-cell, whose boundary has been collapsed to the $m-1$-skeleton, which must be a single point\footnote{By convention, we do not count the basepoint as a $0$-cell when we say that a $2$-cell complex has ``two cells.''}. Consequently, the homotopy type of $X$ is determined by an element of the $(n-1)$st homotopy group $\pi_{n-1}(S^m)$ of the $m$-sphere. Since elliptic homology is a generalized homology theory and since Baker's topological Hecke operators are stable operators, the topological Hecke operators on a space depend only on its {\em stable} homotopy type. The stable homotopy type of $X$ depends only on the stable homotopy class of the attaching map $\delta e^n \rightarrow S^m$, so $X$ is determined by an element of $\pi_{n-1-m}^{st}(S^0)$, the stable homotopy groups\footnote{Since Baker's topological Hecke operators are stable operations, we are free to use methods from stable homotopy throughout this paper. Consequently, in the rest of this paper after the introduction, we drop the superscript ${}^{st}$ to indicate stability, and we simply write $\pi_*$ for {\em stable} homotopy groups.} of spheres. 

The most familiar of the stable homotopy groups $\pi_*^{st}(S^0)$ of spheres is the zeroth homotopy group $\pi_0^{st}(S^0)\cong \mathbb{Z}$: attaching a cell to $S^m$ by a map whose stable class is some nonzero integer $d\in\mathbb{Z}\cong \pi_0^{st}(S^0)$ simply cones off the degree $d$ map on $S^m$, yielding a Moore space. The holomorphic elliptic homology of the resulting space is the modulo $d$ reduction of the ring of holomorphic modular forms over $\mathbb{Z}[\frac{1}{6}]$, and the topological Hecke operators on it agree with the modulo $d$ reduction of the classical Hecke operators. Consequently, the case where our attaching map lives stably in $\pi_0^{st}(S^0)$ is relatively unmysterious.

Consequently we focus on the next nontrivial case, that is, the case where the attaching map is an integer multiple of $v_1^m\alpha_1$ for some\footnote{Both $v_1\in \pi_{2p-2}(BP)$ and $\alpha_1\in \pi_{2p-3}^{st}(S^0)$ depend on the choice of prime $p$, but the prime $p$ is suppressed from the notation for $v_1$ and $\alpha_1$.} prime $p$ and some nonnegative integer $m$. Here is some explanation: if $n>0$, then the $n$th stable homotopy group $\pi_n^{st}(S^0)$ of the zero-sphere is known to be a {\em finite} abelian group, hence decomposes as the direct sum, across all primes $p$, of its $p$-localizations. Consequently the stable homotopy groups of spheres are usually studied $p$-locally, at each prime $p$. A very common approach to the $p$-local stable homotopy groups of spheres is to filter them by the $BP$-Adams filtration\footnote{$BP$ denotes $p$-local Brown-Peterson homology, which depends on the prime $p$, but the prime $p$ is traditionally suppressed from the notation $BP$. A standard reference is the book \cite{MR860042}.}, which arises naturally from the $p$-local Adams-Novikov spectral sequence; one way to describe the $p$-local Adams-Novikov spectral sequence is that it is a spectral sequence whose $E_2$-term is the flat cohomology $H^*_{fl}(\mathcal{M}_{fg}; \omega^*)$ of the moduli stack of one-dimensional formal groups over $\Spec \mathbb{Z}_{(p)}$, and which converges to the $p$-localization $\left( \pi_*^{st}(S^0)\right)_{(p)}$ of the stable homotopy groups of spheres. An element of $\left(\pi_*^{st}(S^0)\right)_{(p)}$ then is said to have $BP$-Adams degree $n$ if, in the spectral sequence, it is detected by an element of $H^n_{fl}(\mathcal{M}_{fg}; \omega^*)$. For all primes $p$, the only elements of $BP$-Adams degree $0$ are the elements of $\left(\pi_0^{st}(S^0)\right)_{(p)}\cong \mathbb{Z}_{(p)}$, which we already described. At odd\footnote{Since $2$ and $3$ are already inverted in elliptic homology, we need only consider odd primes.} primes $p$, there are no elements of $\pi_n^{st}(S^0)$ of $BP$-Adams degree $1$ unless $n\equiv -1$ modulo $2p-2$, in which case we have a cyclic subgroup of $\pi_{(2p-2)(m+1) -1}^{st}(S^0)$ of order $p^{1+\nu_p(m)}$ which has $BP$-Adams degree $1$. The element named $v_1^{m}\alpha_1$ is a relatively standard choice of generator for this cyclic subgroup of $\pi_{(2p-2)(m+1) -1}^{st}(S^0)$. Readers interested in further information about the stable homotopy groups of spheres and the Adams-Novikov spectral sequence should consult the standard reference \cite{MR860042}.

With this in mind, the next natural class of CW-complexes whose topological Hecke eigenforms ought to be calculated are those $2$-cell complexes obtained by attaching a cell via an element of $BP$-Adams degree $1$ for some odd prime $p$, i.e., attaching a cell via an integer multiple of $v_1^m\alpha_1$ for some nonnegative integer $m$ and some odd prime $p$.
Theorem \ref{main thm 304jf} accomplishes that task:
the topological Hecke eigenforms over any two-cell CW-complex whose attaching map has stable homotopy class $p^jv_1^m\alpha_1\in \pi_{2(p-1)(m+1)-1}^{st}(S^0)$ are of exactly two types: 
\begin{itemize}
\item those supported on the bottom cell, of which there is exactly one, with eigencharacter $\lambda$, for each classical Hecke eigenform with eigencharacter $\lambda$; and
\item those nontrivial on the top cell, of which there is exactly one, with eigencharacter $\lambda$, for each classical Hecke eigenform $g$ {\em which is divisible by } $p^{1+\nu_p(m)-j}$ and which has eigencharacter $\lambda$. The component on the bottom cell is then the holomorphic modular form (not generally a Hecke eigenform!) $-p^{j-1-\nu_p(m)}gE_{p-1}^n$. 
\end{itemize}
The appearance of a divisibility condition here is not surprising: the coefficient ring of holomorphic elliptic homology is the ring of holomorphic modular forms {\em over $\mathbb{Z}[\frac{1}{6}]$}, so not every element is divisible by every integer; and if we eliminate questions of divisibility by inverting all primes, then all the nontrivial topological information is lost, since inverting all primes also kills off the stable homotopy classes of all attaching maps in $\pi_n(S^m)$ with $n>m$, and the resulting ``rational topological Hecke eigenforms'' would simply be the same as on a wedge of spheres (which are described completely in Proposition \ref{examples of hecke eigenforms}).
\end{itemize}

Throughout this paper, we confine our attention to elliptic homology and modular forms of level $1$. Generalizations to other levels are possible, in some cases trivially, and in some cases quite nontrivially. In particular, multiplicity one results like Theorem \ref{multiplicity one thm} for level $>1$ require some care, since in the classical case for level $>1$ one only has multiplicity one for cuspidal {\em newforms}. Formulating a suitable treatment of topological newforms and topological oldforms is beyond the scope of this paper, although it ought to be doable: for example, the orthogonality of newforms and oldforms under the Petersson product classically plays a fundamental role in the subject, and we make an effort to lay some kind of groundwork for a topological Petersson product in \cite{peterssonproductpreprint}.

In the present paper we have tried to strike a balance between being readable and interesting to number theorists, and being readable and interesting to topologists. We hope we have succeeded, and we apologize to readers who may be unhappy with any compromises between these two audiences which we have made in our exposition.

We are grateful to Jack Davies for timely and insightful comments which improved this paper. The first author acknowledges support by the U.S. Department of Energy, Office of Science, Basic Energy Sciences, under Award Number DE-SC-SC0022134.

\section{Derived eigentheory.}
\label{Derived eigentheory.}

\subsection{Abstract calculus of commutators of operations.}
\label{Abstract calculus....}
Let $R$ be a commutative ring, $A$ a commutative $R$-algebra, and let $M$ be a graded $A$-module equipped with the structure of a commutative graded $R$-algebra . Of course, the action of a given element $T\in A$ on $M$ may or may not commute with multiplication by a given homogeneous element $E$ of $M$; that is, we may or may not have $T(Em) = T(m)E$ for all homogeneous $m\in M$. Starting from this observation, we show in this section that it is possible to construct interesting non-trivial Hochschild 1-cocycles for the algebra $A$. Our motivating example is when $A$ is the ``abstract'' Hecke algebra, defined below:

\begin{definition}
\label{def: abstract hecke algebra}
Let  $\Pi \subseteq \ZZ$ be the set of all prime numbers, and let $\{T_p\}_{p\in \Pi} = \{T_2, T_3, T_5, T_7, T_{11}, \ldots\}$ be a set of indeterminate variables indexed by the primes. The {\em abstract Hecke algebra} $\mathbf{A}$ is the polynomial $\ZZ$-algebra 
$$
\mathbf{A} := \ZZ[\{T_p\}_{p\in \Pi}] = \ZZ[T_2, T_3, T_5, T_7, T_{11}, \ldots ].
$$
\end{definition}

Let $R$ be $\mathbb{Z}$ or a localization of $\mathbb{Z}$. For any $k \in \ZZ_{\geq 0}$, let $M_k$ be the $R$-module of weight $k$, level one, holomorphic modular forms with Fourier coefficients in $R$. Let 
$
M_* = \bigoplus_{k\in \ZZ_{\geq 0}} M_k
$
be the graded ring of holomorphic modular forms over $R$. Let $\mathbf{A}$ act on $M_*$ as follows: if $f_k = \sum_{n=0}^{\infty} a_n q^n \in M_k$, then the action of $T_p$ on $f_k$ is by the usual action of Hecke operators on weight $k$ modular forms, 
$$
T_p(\sum_{n=0}^{\infty} a_n q^n) = \sum_{n=0}^{\infty} a_{pn}q^n + p^{k-1}\sum_{n=0}^{\infty} a_n q^{np}.
$$
\begin{remark}
For each weight $k$, there are also operators $T_n$ for all $n\in \ZZ_{\geq 0}$ (not necessarily prime), defined recursively in terms of the $T_p$, $p\in \Pi$, by the Hecke relations
\begin{align}
\label{hecke rel 1} T_{mn} &= T_mT_n\quad (m,n) = 1 \\
\label{hecke rel 2} T_{p^{r+2}} &= T_pT_{p^{r+1}} - p^{k-1}T_{p^r} \quad p \text{ prime }, r\geq 0.
\end{align} 
\end{remark}

Fix a prime $p>3$, and let 
$$
E_{p-1} = 1 - \frac{2(p-1)}{B_{p-1}}\sum_{n=1}^{\infty} \sigma_{p-2}(n)q^n \in M_{p-1}
$$
be the Eisenstein series of weight $p-1$. It satisfies the following well-known integrality property: 

\begin{prop}
\label{prop: p-integrality}
$E_{p-1}$ is $p$-integral, that is, it belongs to $M_*\otimes_{\ZZ} \ZZ_{(p)}$ and its reduction mod $p$ satisfies $E_{p-1} \equiv 1 \mod p$. 
\end{prop}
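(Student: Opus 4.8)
The plan is to deduce both claims from the von Staudt--Clausen theorem, which controls the $p$-adic valuation of the Bernoulli number $B_{p-1}$. Recall that von Staudt--Clausen asserts, for even $k$, that $B_k + \sum_{(q-1)\mid k} \frac{1}{q}$ is a rational integer, the sum ranging over primes $q$ with $(q-1)\mid k$. Applying this with $k = p-1$ (which is even, since $p$ is odd), the prime $q=p$ contributes a term $\frac{1}{p}$ because $(p-1)\mid(p-1)$, while every other contributing prime $q$ satisfies $q-1 \leq p-1$ with $q \neq p$, hence $q < p$ and is coprime to $p$. Thus $B_{p-1} = -\frac{1}{p} + u$ for some $u \in \ZZ_{(p)}$, and in particular $\nu_p(B_{p-1}) = -1$.

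First I would use this to compute the valuation of the leading coefficient $-\frac{2(p-1)}{B_{p-1}}$. Writing $B_{p-1} = -\frac{1}{p}(1 - pu)$ with $1 - pu$ a $p$-adic unit, we obtain $\nu_p\!\left(\frac{1}{B_{p-1}}\right) = 1$. Since $p>3$, both $2$ and $p-1$ are units in $\ZZ_{(p)}$, so
$$
\nu_p\!\left( \frac{2(p-1)}{B_{p-1}} \right) = 1.
$$
In particular $\frac{2(p-1)}{B_{p-1}} \in \ZZ_{(p)}$ and is divisible by $p$.

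Next I would observe that the divisor-power sums $\sigma_{p-2}(n) = \sum_{d\mid n} d^{p-2}$ are ordinary integers, hence lie in $\ZZ_{(p)}$. Combining this with the previous step, every non-constant Fourier coefficient $-\frac{2(p-1)}{B_{p-1}}\sigma_{p-2}(n)$ of $E_{p-1}$ lies in $\ZZ_{(p)}$ and is divisible by $p$, while the constant term is $1$. This shows $E_{p-1}\in M_*\otimes_{\ZZ}\ZZ_{(p)}$, proving $p$-integrality, and reduces all non-constant coefficients to $0$ modulo $p$, yielding $E_{p-1}\equiv 1 \pmod p$.

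The argument is essentially routine once von Staudt--Clausen is in hand; the only point requiring care is extracting the \emph{exact} valuation $\nu_p(B_{p-1}) = -1$ rather than a mere lower bound, since the congruence $E_{p-1}\equiv 1 \pmod p$ needs the non-constant coefficients to be divisible by $p$ and not merely to be $p$-integral. I do not anticipate any serious obstacle beyond correctly bookkeeping this valuation.
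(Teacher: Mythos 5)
Your proof is correct and follows essentially the same route as the paper: both arguments apply von Staudt--Clausen to pin down $\nu_p(B_{p-1})=-1$ and then conclude that $\frac{2(p-1)}{B_{p-1}}$, and hence every non-constant Fourier coefficient of $E_{p-1}$, is divisible by $p$. Your write-up is if anything slightly more explicit about isolating the $q=p$ term in the von Staudt--Clausen sum.
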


\begin{proof}
By the von Staudt-Clausen Theorem, 
$$
B_{p-1} + \sum_{\substack {q \text{ prime } \\ (q-1) \mid (p-1)}} \frac{1}{q}  
\in \ZZ 
$$
which implies that $pB_{p-1} \in 1 + p\ZZ_{(p)}$. Therefore 
$$
0 = v_p( p B_{p-1}) = v_p(p) + v_p(B_{p-1}) = 1 + v_p(B_{p-1}),
$$
that is, $v_p(B_{p-1}) = -1$ and therefore the entire expression $\frac{2(p-1)}{B_{p-1}} \sum_{n=1}^{\infty} \sigma_{p-2}(n) q^n$ is divisible by $p$. 
\end{proof}

Going back to our original problem, with $A= \mathbf{A}, M = M_*$, and $E = E_{p-1}$, we do not generally have $T_{\ell}(E_{p-1}f) = T_{\ell}(f)E_{p-1}$ for all modular forms $f\in M_*$ and primes $\ell$. However, since $E_{p-1} \equiv 1 \mod p$ by Proposition \ref{prop: p-integrality}, we have $T_{\ell}(E_{p-1}f) \equiv T_{\ell}(f)E_{p-1}$ mod $p$. This suggests that perhaps there is more to be said about the relationship between $T_{\ell}(E_{p-1}f)$ and $T_{\ell}(f)E_{p-1}$.

The following lemma will be important for the proof of Proposition \ref{interpretation of Dn}, and later, in Theorem \ref{eigenform conditions thm}. The proof is extremely elementary, but we include it for the sake of completeness.
\begin{lemma}\label{binomial coeffs lemma}
Let $k,n$ be integers such that $0\leq k\leq n$. Then
\begin{align*}
 \sum_{i=0}^{n-k}(-1)^i\binom{n}{i+k}\binom{i+k}{i}
  &= \left\{\begin{array}{ll} 0 &\mbox{ if\ }k<n \\ 1 &\mbox{ if\ }k=n .\end{array}\right.
\end{align*}
\end{lemma}
\begin{proof}
Given a set $S$ with $n$ elements, the product $\binom{n}{i+k}\binom{i+k}{i}$ is the number of choices of an $(i+k)$-element subset $T$ of $S$ together with an $i$-element subset $U$ of $T$. Such a pair $(T,U)$ is equivalently specified, uniquely, by giving a $k$-element subset $V$ of $S$ and a $i$-element subset $U$ of $S$ which is disjoint from $V$. We then have $T = V\cup U$ and $V = T\backslash U$. The number of such pairs $(V,U)$ is $\binom{n}{k}\binom{n-k}{i}$. Consequently we have equalities
\begin{align*}
 \sum_{i=0}^{n-k}(-1)^i\binom{n}{i+k}\binom{i+k}{i} 
  &= \sum_{i=0}^{n-k}(-1)^i\binom{n}{k}\binom{n-k}{i}\\
  &= \binom{n}{k}\sum_{i=0}^{n-k}(-1)^i\binom{n-k}{i}\\
  &= \left\{\begin{array}{ll} \binom{n}{k}\cdot 1 &\mbox{ if\ }n-k=0 \\ \binom{n}{k}\cdot 0 &\mbox{ if\ }n-k>0.\end{array}\right.
\end{align*}
\end{proof}

For the moment, we return to the general situation of a commutative $R$-algebra $A$, a graded $A$-module $M$ with $R$-algebra structure, and a homogeneous element $E\in M$. The action of each element $T\in A$ defines an element of $\hom_{\Mod(R)}(M,M)$, which we will also call $T$; and multiplication by $E$ also defines an element of $\hom_{\Mod(R)}(M,M)$, which we will call $e$.
Under composition, $\hom_{\Mod(R)}(M,M)$ is an associative $R$-algebra. Suppose, as in the case of $T_{\ell}$ and $E_{p-1}$, that there exists some element $\mu\in R$ such that the commutator of $T$ and $e$ is congruent to zero modulo $\mu$. Suppose that $M$ is $\mu$-torsion-free, and let $T'\in \hom_{\Mod(R)}(M,M)$ denote the commutator of $T$ and $e$ divided by $\mu$, that is,
\begin{align*}
 T'(m) &= \frac{1}{\mu}(-eT + Te)(m) \\
       &= \frac{1}{\mu}(-ET(m) + T(Em)).
\end{align*}
Of course $T'$ depends on the choice of $E$ and $\mu$, and not only $T$. The derivative-like notation $T'$ is motivated by the formal properties of $T'$:
\begin{prop}
The operation $T\mapsto T'$ on $\hom_{\Mod(R)}(M,M)$ satisfies the conditions
\begin{align*}
 (rS)' &= r(S') \mbox{\ \ \ for\ all\ } r\in R,\\
 (ST)' &= S'T + ST',\mbox{\ \ \ and} \\
 (S+T)' &= S' + T'.
\end{align*}
\end{prop}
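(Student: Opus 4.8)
The plan is to recognize $T'$ as a central-scalar multiple of the inner derivation of the associative $R$-algebra $\hom_{\Mod(R)}(M,M)$ determined by the element $e$, and to observe that all three identities are just the standard formal properties of such an inner derivation. Writing the commutator as $[T,e] = Te - eT$, the defining formula reads $T' = \frac{1}{\mu}[T,e]$, so up to the scalar $\frac{1}{\mu}$ the operation $T\mapsto T'$ is exactly $T\mapsto \mathrm{ad}_e(T) = [T,e]$.

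First I would verify the three identities at the level of the unnormalized commutator $[-,e]$, where they hold unconditionally for all $S,T\in \hom_{\Mod(R)}(M,M)$ and $r\in R$. Additivity $[S+T,e] = [S,e] + [T,e]$ is immediate from bilinearity of composition. For $R$-linearity I would use that $r$ is central in $\hom_{\Mod(R)}(M,M)$ --- since every element of this algebra is an $R$-module map, multiplication by $r$ commutes with it --- to pull $r$ through, giving $[rS,e] = rSe - erS = r(Se - eS) = r[S,e]$. The one computation worth displaying is the Leibniz rule, obtained by inserting and cancelling the term $SeT$:
\[
[ST,e] = STe - eST = S(Te - eT) + (Se - eS)T = S[T,e] + [S,e]T.
\]

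It then remains to divide by $\mu$. Because $\mu$ is a central scalar, it passes through composition and through $r$, so each identity for $[-,e]$ descends to the corresponding identity for $\frac{1}{\mu}[-,e]$ once the relevant quotients are known to exist as genuine elements of $\hom_{\Mod(R)}(M,M)$; this is precisely where the hypothesis that $M$ is $\mu$-torsion-free enters, guaranteeing that division by $\mu$, when possible, is unique. For the product rule one notes that if $[S,e]$ and $[T,e]$ are each divisible by $\mu$, then so is $S[T,e] + [S,e]T = [ST,e]$, so $(ST)'$ is well-defined and equals $S'T + ST'$.

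I do not anticipate any genuine obstacle here: the mathematical content is the classical fact that bracketing against a fixed element of an associative algebra is a derivation, and the normalization by $\frac{1}{\mu}$ is harmless given $\mu$-torsion-freeness. The only point requiring a moment's care is well-definedness --- ensuring the commutators really are divisible by $\mu$ so that $T'$ exists --- and the cancellation in the Leibniz computation in fact propagates this divisibility automatically from $S$ and $T$ to the composite $ST$.
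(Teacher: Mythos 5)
Your proof is correct and is exactly the standard verification the paper has in mind: it records the identities for the inner derivation $[-,e]$ and then divides by $\mu$, using $\mu$-torsion-freeness of $M$ for uniqueness of the quotient. The paper simply declares this an elementary exercise, so your writeup (including the worthwhile observation that divisibility of $[S,e]$ and $[T,e]$ by $\mu$ propagates to $[ST,e]$, making $(ST)'$ well-defined) fills in precisely what was left to the reader.
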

\begin{proof}
Elementary exercise.
\end{proof}
In other words, the operator $T\mapsto T'$, as an element in the $R$-algebra $\mathcal{O}$ of $R$-linear functions $A\rightarrow\hom_R(M,M)$, is an $R$-linear derivation. The function $T'$ vanishes if and only if $T$ commutes with multiplication by $E$; pursuing the analogy with differential calculus, it is as though we are considering a peculiar kind of calculus in which the ``constant functions'' are those which commute with multiplication by $E$. There is a more subtle difference between our situation and ordinary calculus, which makes the analogy with calculus somewhat strained: since the multiplication on $A$ is composition of the operations on $M$ (i.e., if $S,T\in A$, then $(ST)m = S(T(m))$), the rule $(ST)' = S'T + ST'$ is as though we have a product rule, rather than a chain rule, for function composition. 

It is not necessarily the case that iterated derivatives like $T \mapsto T''$ are defined, since $S'$ is only defined if $SE - ES$ is a multiple of $\mu$, and generally in the case $S = T'$ one cannot expect $T'E - ET' = \frac{1}{\mu} \left( SEE - 2ESE + EES\right)$ to be a multiple of $\mu$. 
However, all that is needed is to multiply by an appropriate power of $\mu$: if $T'$ is defined (i.e., if $TE - ET$ is a multiple of $\mu$), then $\mu^{n-1} T^{(n)}$ is defined for all $n\geq 1$. 
\begin{definition}\label{def of Dn}
Let $\delta\in\mathcal{O}$ denote the derivative $\delta T = T^{\prime}$ defined above.
For each nonnegative integer $j$ and each positive integer $n$, let $e^j \mu^{n-1}\delta^n\in \mathcal{O}$ denote $E^j\mu^{-1}$ times the $n$th iterate $\mu \delta\circ \dots \circ\mu \delta$ of $\mu\delta$.
Let 
\begin{align}
\label{eq 0493085} D_n := \sum_{i=1}^n \binom{n}{i} e^{n-i} \mu^{i-1} \delta^i\in \mathcal{O}.\end{align}
\end{definition}

Clearly $D_1T = \delta T = T'$ measures the failure of $T$ to commute with multiplication by $E$, but it is less obvious what $D_nT$ measures, for $n>1$. Happily, there is a simple answer to that question:
\begin{prop}\label{interpretation of Dn}
Let $n$ be a positive integer. Then $D_nT = 0$ if and only if $T$ commutes with multiplication by $E^n$.
\end{prop}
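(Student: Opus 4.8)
The plan is to reduce the statement to a single operator identity in $\hom_R(M,M)$, namely
\[ \mu D_n T = Te^n - e^n T, \]
where $e^n$ denotes the operator ``multiplication by $E^n$.'' Once this identity is established, the proposition follows at once: since $M$ is $\mu$-torsion-free, so is $\hom_R(M,M)$ (if $\mu\phi=0$ then $\mu\cdot\phi(m)=0$ for all $m$, whence $\phi=0$), and therefore $D_nT=0$ if and only if $\mu D_nT=0$, i.e. if and only if $Te^n=e^nT$, which is precisely the assertion that $T$ commutes with multiplication by $E^n$.

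To prove the identity I would work inside the associative $R$-algebra $\hom_R(M,M)$ and introduce the two \emph{commuting} $R$-linear endomorphisms $\mathsf{L},\mathsf{R}$ of this algebra given by left and right composition with $e$, i.e. $\mathsf{L}(S)=eS$ and $\mathsf{R}(S)=Se$; they commute since $e(Se)=(eS)e$. The whole point of this change of viewpoint is that the derivation $\mu\delta$ becomes the single operator $\mathsf{R}-\mathsf{L}$, because $\mu\delta(T)=Te-eT$, while composing with $e^{n-i}$ on the left is just $\mathsf{L}^{n-i}$. Using $\mu^{i-1}\delta^i=\mu^{-1}(\mathsf{R}-\mathsf{L})^i$, the definition of $D_n$ in Definition~\ref{def of Dn} then rearranges to
\[ \mu D_n T = \sum_{i=1}^{n}\binom{n}{i}\mathsf{L}^{n-i}(\mathsf{R}-\mathsf{L})^i(T). \]
Adding and subtracting the missing $i=0$ summand and applying the binomial theorem to the two commuting operators $\mathsf{L}$ and $\mathsf{R}-\mathsf{L}$ gives
\[ \mu D_n T = \bigl(\mathsf{L}+(\mathsf{R}-\mathsf{L})\bigr)^n(T)-\mathsf{L}^n(T)=\mathsf{R}^n(T)-\mathsf{L}^n(T)=Te^n-e^nT, \]
which is the claimed identity.

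I expect the only real (though modest) obstacle to be the bookkeeping that verifies all the intermediate terms $e^aTe^b$ with $a+b=n$ and $0<a,b$ genuinely cancel; this is exactly the content of Lemma~\ref{binomial coeffs lemma}, with its index $k$ playing the role of the right-hand exponent $b$. It is worth stressing that the lower limit $i=1$ in the definition of $D_n$ is precisely what makes the result a commutator rather than just $Te^n$: it is the single removed term $\mathsf{L}^n(T)=e^nT$ that supplies the subtracted $-e^nT$, and on the combinatorial side this corresponds to the missing $i=0$ term in Lemma~\ref{binomial coeffs lemma} for $k=0$. Finally, I would remark that each $\mu^{i-1}\delta^i T$ is a legitimate element of $\hom_R(M,M)$: since $Te-eT$ is assumed to be a multiple of $\mu$, we have $(\mathsf{R}-\mathsf{L})^i(T)=(\mathsf{R}-\mathsf{L})^{i-1}(Te-eT)$, which is divisible by $\mu$, so dividing by $\mu$ is harmless in the $\mu$-torsion-free module $\hom_R(M,M)$.
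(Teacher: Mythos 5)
Your proof is correct, and it establishes exactly the identity the paper's proof hinges on, namely $\mu D_nT = Te^n - e^nT$, followed by the same $\mu$-torsion-freeness argument to pass from $\mu D_nT=0$ to $D_nT=0$. Where you differ is in how that identity is verified. The paper first proves $(\mu\delta)^nT=\sum_{i=0}^n(-1)^i\binom{n}{i}E^iTE^{n-i}$ by induction, then substitutes this into the definition of $D_n$, reindexes the resulting double sum by the exponent $k$ of $E$ on the right, and invokes the combinatorial identity of Lemma \ref{binomial coeffs lemma} to kill the cross terms. Your reformulation in terms of the commuting left- and right-multiplication operators $\mathsf{L}$ and $\mathsf{R}$ replaces both the induction and the combinatorial lemma with a single application of the binomial theorem, $\sum_{i=0}^n\binom{n}{i}\mathsf{L}^{n-i}(\mathsf{R}-\mathsf{L})^i=\mathsf{R}^n$, so the identity drops out after removing the $i=0$ term $\mathsf{L}^n$. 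This is cleaner and arguably more conceptual; you are slightly too modest in suggesting that the cancellation of the intermediate terms $e^aTe^b$ still needs separate bookkeeping --- in your formalism the binomial theorem already does all of that work, and Lemma \ref{binomial coeffs lemma} is simply the coefficient-by-coefficient shadow of it. The one thing your route does not buy is reusability of the lemma itself, which the paper needs again in the proof of Theorem \ref{eigenform conditions thm}. Your closing remarks on the well-definedness of $\mu^{i-1}\delta^iT$ and on the $\mu$-torsion-freeness of $\hom_R(M,M)$ are both correct and worth having made explicit.
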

\begin{proof}
A straightforward induction proves that
\begin{align*}
 (\mu\delta)^nT &= \sum_{i=0}^n (-1)^i\binom{n}{i} E^i TE^{n-i}
\end{align*}
for all $n\geq 0$. 
Consequently we have
\begin{align}
\nonumber D_nT 
  &= \sum_{j=1}^n \binom{n}{j} E^{n-j}\mu^{-1}\delta^j T \\
\nonumber  &= \sum_{j=1}^n \binom{n}{j} E^{n-j}\mu^{-1}\sum_{i=0}^j(-1)^i\binom{j}{i}E^iTE^{j-i} \\
\nonumber  &= \mu^{-1}\sum_{j=1}^n\sum_{i=0}^j (-1)^i\binom{n}{j}\binom{j}{i}E^{n-j+i}TE^{j-i} \\
\nonumber  &= \mu^{-1}\sum_{k=0}^n \left( \sum_{i=0,i+k>0}^{n-k} (-1)^i\binom{n}{i+k}\binom{i+k}{i}\right) E^{n-k}TE^k \\
\label{use of lemma 1}  &= \mu^{-1}\left( TE^n - E^nT\right),
\end{align}
where \eqref{use of lemma 1} is due to Lemma \ref{binomial coeffs lemma}.
Since $M$ is assumed to be $\mu$-torsion-free, $D_nT = 0$ if and only if $TE^n = E^nT$ in $\hom_R(M,M)$.
\end{proof}

\begin{observation}\label{derivation observation}
Certainly not every element of $\mathcal{O}$ is a derivation. In particular, the iterated derivatives $\mu^{n-1}\delta^n$ are, in general, not derivations (unless $n=1$). However, it follows from the equation $D_nT = \mu^{-1}Te^{n} - \mu^{-1}e^nT$ obtained in the proof of Proposition \ref{interpretation of Dn} that $D_n$ is an $R$-linear derivation.
\end{observation}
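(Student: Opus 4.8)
The plan is to avoid the defining expression $D_n = \sum_{i=1}^n \binom{n}{i} e^{n-i}\mu^{i-1}\delta^i$ entirely and to argue only from the closed form $\mu D_nT = Te^n - e^nT$ recorded in the proof of Proposition~\ref{interpretation of Dn}. The key observation is that this formula exhibits $D_n$ as $\mu^{-1}$ times the inner derivation $X\mapsto Xe^n - e^nX$ of the associative $R$-algebra $\hom_R(M,M)$, precomposed with the action homomorphism $A\to\hom_R(M,M)$, $T\mapsto T$. Since the commutator with a fixed element is always a derivation of an associative algebra, and since scaling by the central element $\mu^{-1}\in R$ preserves that property, $D_n$ should inherit all three derivation axioms with no work beyond bookkeeping.

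Concretely, I would verify the three conditions directly from $\mu D_nT = Te^n - e^nT$. Additivity $D_n(S+T)=D_nS+D_nT$ and $R$-linearity $D_n(rS)=rD_nS$ for $r\in R$ are immediate, since the commutator is additive and $R$-linear in its first argument and the action map $A\to\hom_R(M,M)$ is itself $R$-linear. For the Leibniz rule I would use that the product in $A$ corresponds under the action map to composition of operators, reducing the claim to the standard one-line expansion
\begin{align*}
\mu\,D_n(ST) &= STe^n - e^nST \\
 &= S(Te^n - e^nT) + (Se^n - e^nS)T \\
 &= \mu\,S(D_nT) + \mu\,(D_nS)T,
\end{align*}
in which the middle step merely adds and subtracts $Se^nT$. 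Because $M$ is $\mu$-torsion-free, so is $\hom_R(M,M)$, and I may cancel the common factor $\mu$ to obtain $D_n(ST) = S(D_nT) + (D_nS)T$, which is exactly the Leibniz identity for an $R$-linear derivation.

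I expect no genuine obstacle here; the only point requiring care is conceptual rather than computational. The individual summands $e^{n-i}\mu^{i-1}\delta^i$ are not derivations when $i>1$, so the derivation property of $D_n$ is invisible term-by-term and cannot be deduced by reducing to the $n=1$ case, in which $D_1=\delta$ was already seen to be a derivation. What makes the argument go through is precisely that Lemma~\ref{binomial coeffs lemma} has already telescoped the entire sum into the single commutator $\mu^{-1}(Te^n-e^nT)$; once that collapse is in hand, the derivation property is a purely formal consequence of the inner-derivation structure and requires no further information about the iterates $\delta^i$.
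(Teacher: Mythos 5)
Your proposal is correct and takes essentially the same route as the paper: the Observation is stated without a written-out proof precisely because, as you say, once $D_nT = \mu^{-1}(Te^n - e^nT)$ is in hand, $D_n$ is $\mu^{-1}$ times an inner derivation and the three axioms follow formally. Your care in working with $\mu D_n(ST)$ and cancelling $\mu$ via the $\mu$-torsion-freeness of $\hom_R(M,M)$ (inherited from that of $M$) is exactly the right way to make the division by $\mu$ rigorous.
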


Another consequence of the equation $D_nT = \mu^{-1}Te^{n} - \mu^{-1}e^nT$ is further $p$-divisibility of $D_nT$ when $p$ divides $n$:

\begin{definition-proposition}\label{def of delta}
Suppose the element $E\in M$ is congruent to $1$ modulo $\mu$. 
We write $\nu_p$ for $p$-adic valuation.
Suppose that $\mu$ is a multiple (in $R$) of $p$. Then for each $m\in M$,
the element $D_nTm$ of $M$ is divisible by $p^{\nu_p(n)}$.
Consequently we write $\Delta_n$ for the (unique, since $p\mid \mu$ and $M$ is assumed $\mu$-torsion-free) element of $\hom_R(A,\hom_R(M,M))$ such that $p^{\nu_p(n)} \Delta_n = D_n$.
\end{definition-proposition} 
\begin{proof}
Since $E\equiv 1 \mod \mu$, we have that $E-1 = \mu x$ for some $x\in M$, 
and so $E^n = (1 + \mu x)^n =  \sum_{j=0}^n \binom{n}{j}\mu^j x^j$.
Since the $p$-adic valuation of $\mu$ is at least $1$, the unique term of least $p$-adic valuation in the sum
$E^n - 1 = \sum_{j=1}^n \binom{n}{j}\mu^j x^j$ is the $j=1$ term $n\mu x$, i.e., the $p$-adic valuation of $E^n - 1$ is equal to $\nu_p(n) + \nu_p(\mu) + \nu_p(x)$. In particular, $E^n \equiv 1$ modulo $p^{\nu_p(n) + \nu_p(\mu)}$.

Consequently, for each $m\in M$, we have that $E^nm \equiv m$ modulo $p^{\nu_p(n) + \nu_p(\mu)}$, and so $TE^nm \equiv Tm$ modulo $p^{\nu_p(n) + \nu_p(\mu)}$.
Meanwhile, $E^nTm \equiv Tm$ modulo $p^{\nu_p(n) + \nu_p(\mu)}$ as well,
so $\frac{1}{\mu}\left( TE^n - E^nT\right) = D_nT$ is divisible by $p^{\nu_p(n)}$.
\end{proof}
So, for example, when $T = T_{\ell}$ is a Hecke operator and $E = E_{p-1}$ is the weight $p-1$ Eisenstein series, let $p = \mu$, and then $\Delta_1T$ is the operator on modular forms given by $\Delta_1Tf = D_1Tf = \frac{1}{p}\left( T(E_{p-1}f) - E_{p-1}Tf\right))$, and in general $\Delta_nTf = D_nTf = \frac{1}{p}\left( T(E_{p-1}^nf) - E_{p-1}^nTf\right)$ as long as $p\nmid n$. If $n$ is a multiple of $p$ but not $p^2$, then we have $\Delta_nTf = \frac{1}{p}D_nTf = \frac{1}{p^2}\left( T(E_{p-1}^nf) - E_{p-1}^nTf\right)$, and so on.

\subsection{Hochschild cohomology and derived eigenforms}
\label{Hochschild cohomology and derived eigenforms}

Recall (Def. \ref{def: abstract hecke algebra}) that the abstract Hecke algebra $\mathbf{A}$ is the polynomial algebra on the set of $T_p$, as $p$ runs through the set $\Pi$ of all primes. For the purposes of this article it will be useful to slightly generalize this notion, as follows:

\begin{definition}
\label{def:abstract Hecke for P}
Let $P\subseteq \Pi$ be a subset of the set of all prime numbers. 
By the {\em abstract Hecke algebra for $P$} we will mean the polynomial $\mathbb{Z}[P^{-1}]$-algebra on the set of generators $\{ T_p: p\in P\}$. We adopt the notation $\mathbf{A}_P$ for this algebra.
\end{definition}

This abstract Hecke algebra acts on the graded ring $M_*\otimes_{\ZZ} \ZZ[P^{-1}]$ as before. Note that if $P = \Pi$ is taken to be the set of all primes, then $\mathbf{A}_{\Pi} = \mathbf{A}\otimes_{\ZZ}\QQ$, where $\mathbf{A}$ is the abstract Hecke algebra of Def. \ref{def: abstract hecke algebra}. 


We will need to consider the Hochschild cohomology of abstract Hecke algebras. 
We refer to \cite{MR1269324} for basics of Hochschild cohomology. Most immediately relevant are $HH^0$ and $HH^1$: given a commutative $R$-algebra $A$, and an $A$-bimodule $M$, the degree $0$ Hochschild cohomology $HH^0(A; M)$ is isomorphic to the $R$-module of elements $m\in M$ such that $am = ma$ for all $a\in A$. Meanwhile, the degree $1$ Hochschild cohomology $HH^1(A; M)$ is isomorphic to the $R$-module of $1$-cochains modulo $1$-coboundaries. Here the $1$-cochains are the $R$-linear functions $\phi: A\rightarrow M$ such that $a_0\phi(a_1) - \phi(a_0a_1) + \phi(a_0)a_1$ for all $a_0,a_1\in A$, while the $1$-coboundaries are the $1$-cocycles of the form $a \mapsto am - ma$ for some fixed element $m\in M$. We will only consider bimodules of a particular form:

\begin{definition}\label{bimodules definition 1}
Suppose $A$ is a commutative $R$-algebra, and $M$ is a left $A$-module.
Suppose we are given an algebraic field extension $K/\mathbb{Q}$ and a $R$-algebra morphism $\lambda: A \rightarrow \mathcal{O}_K\otimes_{\mathbb{Z}} R$.
 Equip $\mathcal{O}_K\otimes_{\mathbb{Z}} M$ with the structure of an $A$-bimodule by 
\begin{align*}
 a\cdot m &= am \mbox{,\ the\ assumed\ left\ $A$-action\ on\ $M$,\ and} \\
 m\cdot a &= \lambda(a)m.
\end{align*}
We will write $M^{\lambda}$ for $\mathcal{O}_K \otimes_{\mathbb{Z}} M$ regarded as an $A$-bimodule in this particular way.
\end{definition}

Of course the motivating example of the above is the situation where $R = \mathbb{Z}[P^{-1}]$ for some subset of primes $P\subseteq \Pi$, $A$ is the abstract Hecke algebra $\mathbf{A}_P$, and $M = M_*\otimes_{\ZZ} \ZZ[P^{-1}]$ is the ring of holomorphic level one modular forms over $\mathbb{Z}[P^{-1}]$. In this case it is customary to call $\lambda$ an {\em eigencharacter} of $\mathbf{A}_P$:
\begin{definition}
An {\em eigencharacter} of the abstract Hecke algebra $\mathbf{A}_P$ is a ring homomorphism from $\mathbf{A}_P$ to a commutative ring.
\end{definition} 
If $P$ is a cofinite set of primes, then $HH^0(\mathbf{A}_P; M^{\lambda})$ recovers the $\mathcal{O}_K[P^{-1}]$-module of classical Hecke eigenforms with eigencharacter $\lambda$. Consequently, we regard the elements of the Hochschild groups $HH^n(\mathbf{A}_P; M^{\lambda})$, for $n>0$, as ``derived eigenforms'':

\begin{definition}
\label{def:derivedEigenform}
Let $P\subseteq \Pi$, let $\lambda: \mathbf{A}_P \rightarrow \mathcal{O}_K\otimes_{\mathbb{Z}} \mathbb{Z}[P^{-1}]$ be an eigencharacter and let $M = M_*\otimes_{\ZZ} \ZZ[P^{-1}]$ be the ring of holomorphic level one modular forms over $\mathbb{Z}[P^{-1}]$. A {\em derived eigenform} is an element of $HH^n(\mathbf{A}_P ; M^{\lambda})$, for some $n>0$.
\end{definition}

One of the main goals of this article is to demonstrate that derived eigenforms occur in two natural ways: in Corollary \ref{main thm cor 123409} below, we describe `topological' Hecke eigenforms over $2$-cell complexes in terms of derived Hecke eigenforms; and below, in Definition-Proposition \ref{def of dot-cup product}, we construct degree-raising operations on Hochschild cohomology which naturally take (classical) Hecke eigenforms to derived Hecke eigenforms. In Corollary \ref{main thm cor 123409}, we see that derived Hecke eigenforms which describe topological Hecke eigenforms indeed arise from these degree-raising operations, in particular from the dot-cup product with certain fixed classes in Hochschild cohomology.

We now give some general algebraic constructions that will lead to explicit examples of derived Hecke eigenforms. 
First, in Definition \ref{bimodules definition 2} we recall the standard $A$-bimodule structure on $\hom_R(M,N)$:
\begin{definition}\label{bimodules definition 2}
Given left $A$-modules $M$ and $N$, we equip $\hom_R(M,N)$ with the structure of an $A$-bimodule by
letting, for any $a\in A$ and $f\in \hom_R(M,N)$, 
\begin{align*}
 (a \cdot f)(m) &= af(m) \mbox{,\ and} \\
 (f \cdot a)(m) &= f(am).
\end{align*}
In other words: the left action of $A$ on $\hom_R(M,N)$ is by post-composition, while the right action of $A$ on $\hom_R(M,N)$ is by pre-composition. Consequently, \[ HH^0(A; \hom_R(M,N))\cong \hom_A(M,N).\]
\end{definition}

Definition-Proposition \ref{def of dot-cup product} refers to the cup product on Hochschild cochains. It is defined as follows: given an $R$-algebra $A$ and $A$-bimodules $M,N$, an $i$-cochain $f: A^{\otimes_R i}\rightarrow M$, and a $j$-cochain $g: A^{\otimes_R j}\rightarrow N$, the cup product $f\cup g$ is the $(i+j)$-cochain $A^{\otimes_R (i+j)}\rightarrow M\otimes_A N$ given by $(f\cup g)(a_1\otimes \dots \otimes a_{i+j}) = f(a_1\otimes \dots \otimes a_i)\otimes g(a_{i+1}\otimes \dots \otimes a_{i+j})$. (As far as the authors know, \cite{MR0161898} was where the cup product was first defined for a general associative algebra, although a version for group algebras was considered earlier in \cite{MR0019092}. A nice exposition can be found in the unpublished notes \cite{drupieskinotes}.)

\begin{definition-proposition}\label{def of dot-cup product}
Given $R,A,M,N,\lambda$ as in Definitions \ref{bimodules definition 1} and \ref{bimodules definition 2}, there exists a unique $A$-bimodule map
$\chi: \hom_R(M,N)\otimes_A M^{\lambda} \rightarrow N^{\lambda}$
such that the composite of $\chi$ with the natural quotient map $\hom_R(M,N)\otimes_R M^{\lambda}\rightarrow \hom_R(M,N)\otimes_A M^{\lambda}$
is the evaluation map $\ev: \hom_R(M,N)\otimes_R M\rightarrow N (= N^{\lambda})$ sending $f\otimes m$ to $f(m)$. 

Consequently the composite of the cup product 
\[ HH^*(A; \hom_R(M,N)) \otimes_R HH^*(A; M^{\lambda})\stackrel{\cup}{\longrightarrow} HH^*(A; \hom_R(M,N)\otimes_A M^{\lambda})\]
with the map $HH^*(A; \hom_R(M,N)\otimes_A M^{\lambda}) \stackrel{HH^*(A; \chi)}{\longrightarrow} HH^*(A; N^{\lambda})$ is a $R$-linear map
\begin{equation}\label{dotcup} HH^*(A; \hom_R(M,N)) \otimes_R HH^*(A; M^{\lambda}) \rightarrow HH^*(A; N^{\lambda}).\end{equation}
We call the resulting $R$-bilinear product map $HH^*(A; \hom_R(M,N)) \times HH^*(A; M^{\lambda}) \rightarrow HH^*(A; N^{\lambda})$ the {\em dot-cup product}, written $\cupdot$.
\end{definition-proposition}
\begin{proof}
Let $q$ denote the quotient map $\hom_R(M,N)\otimes_R M^{\lambda}\rightarrow \hom_R(M,N)\otimes_A M^{\lambda}$.
If $\ev$ factors through $q$, then it does so uniquely, since $q$ is surjective. So we need to check that $\ev$ vanishes on the kernel of $q$, i.e., that
$\ev(fa\otimes m - f\otimes am)$ vanishes for all $a\in A$ and $m\in M$ and $f\in\hom_R(M,N)$.
This is routine and follows immediately from the definitions.

We need to check that the resulting factor map $\chi$ is an $A$-bimodule map. This is also routine:
\begin{align*}
 a\chi(f\otimes m)
  &= af(m)
  = \chi(af\otimes m), \mbox{\ \ \ and} \\
 (\chi(f\otimes m))a 
  &= \lambda(a)\chi(f\otimes m)
  = \chi(f\otimes \lambda(a)m)
  = \chi((f\otimes m)a).
\end{align*}

That the cup product is $R$-bilinear is proven in \cite{MR0161898}. Since $\cupdot$ is the composite of $\cup$ with the $R$-linear map $HH^*(A; \chi)$, the composite \eqref{dotcup} is also $R$-linear, i.e., $\cupdot$ is $R$-bilinear.
\end{proof}

Given a Hecke eigenform $f\in HH^0(\mathbf{A}_P; M^{\lambda})$ and an element $g$ of \linebreak $HH^0(\mathbf{A}_P; \hom_R(M,N))$, their dot-cup product $f\cupdot g\in HH^0(\mathbf{A}_P; N^{\lambda})$ is simply the image of $f$ under the map $\hom_{\mathbf{A}_P}(M,N)$ associated to $g\in HH^0(\mathbf{A}_P; \hom_R(M,N))$ $\cong \hom_{\mathbf{A}_P}(M,N)$. As we show below in Proposition \ref{prop:HH0}, this construction is not very interesting for modular forms.
On the other hand, given $f\in HH^0(\mathbf{A}_P; M^{\lambda})$ and an element $\kappa\in HH^1(\mathbf{A}_P; \hom_R(M,N))$ of degree one, the dot-cup product $f\cupdot \kappa\in HH^1(\mathbf{A}_P; N^{\lambda})$ is a derived Hecke eigenform, in the sense of Definition \ref{def:derivedEigenform}. Whenever the space $HH^1(\mathbf{A}_P; \hom_R(M,N))$ is non-zero, dot-cupping with elements of $HH^1(\mathbf{A}_P; \hom_R(M,N))$ then yields an ample supply of algebraic operations which take as input a classical Hecke eigenform and produce as output a derived Hecke eigenform. We are thus led to establish some structural and vanishing results for the $HH^0$- and $HH^1$- terms:

\begin{prop}
\label{prop:HH0}
Let $P$ be a set of primes which is cofinite, i.e., $P$ contains all but finitely many prime numbers. Let $k,k^{\prime}$ be integers with $k\neq k^{\prime}$, and let $M_k$ (respectively, $M_{k^{\prime}}$) be the $\mathbb{Z}[P^{-1}]$-module of holomorphic modular forms over $\mathbb{Z}[P^{-1}]$ of level $1$ and weight $k$ (respectively, weight $k^{\prime}$).
Then $HH^0(\mathbf{A}_P; \hom_{\mathbb{Z}[P^{-1}]}(M_k,M_{k^{\prime}}))$ vanishes.
\end{prop}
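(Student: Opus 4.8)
The plan is to reduce the statement to a concrete assertion about Hecke eigensystems and then dispose of it by a comparison in characteristic zero. By the identification recorded at the end of Definition \ref{bimodules definition 2}, we have $HH^0(\mathbf{A}_P; \hom_{\mathbb{Z}[P^{-1}]}(M_k,M_{k'})) \cong \hom_{\mathbf{A}_P}(M_k, M_{k'})$, the $\mathbb{Z}[P^{-1}]$-module of $\mathbb{Z}[P^{-1}]$-linear maps $\phi\colon M_k \to M_{k'}$ commuting with the action of $T_p$ for every $p\in P$. So it suffices to show every such $\phi$ vanishes. If either $M_k$ or $M_{k'}$ is zero there is nothing to prove, so I assume both are nonzero, free $\mathbb{Z}[P^{-1}]$-modules of finite rank; in particular $k,k'$ are even and nonnegative. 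Since $M_k$ and $M_{k'}$ are torsion-free and $\mathbb{Z}[P^{-1}]\hookrightarrow \mathbb{C}$, a nonzero $\phi$ induces a nonzero $\mathbb{C}$-linear, Hecke-equivariant map $\phi_{\mathbb{C}}\colon M_k\otimes_{\mathbb{Z}[P^{-1}]}\mathbb{C} \to M_{k'}\otimes_{\mathbb{Z}[P^{-1}]}\mathbb{C}$.

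Next I would diagonalize. The operators $\{T_p\}_{p\in P}$ are simultaneously diagonalizable on each of these spaces, each $T_p$ being normal for the Petersson product on the cuspidal part and acting by a scalar on the (one-dimensional, level one) Eisenstein part. Equivariance forces $\phi_{\mathbb{C}}$ to carry the joint eigenspace for a system $(c_p)_{p\in P}$ in the source into the joint eigenspace for the \emph{same} system in the target, so $\phi_{\mathbb{C}}\ne 0$ produces a weight-$k$ level-one Hecke eigenform $f$ and a weight-$k'$ level-one Hecke eigenform $g$ with $a_p(f)=a_p(g)$ for all $p\in P$. (Having fixed a joint $\{T_p\}_{p\in P}$-eigenspace, one further diagonalizes the finitely many remaining $T_q$, $q\notin P$, to produce genuine eigenforms realizing the system on $P$.) The entire content of the proposition is now that two level-one eigenforms of different weights cannot share a Hecke eigensystem on a cofinite set of primes.

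I would derive the contradiction from $k\ne k'$ using the associated $\ell$-adic Galois representations, which treats all cases uniformly. Fix a prime $\ell$. To a cuspidal eigenform of weight $\kappa$ Deligne attaches a continuous irreducible two-dimensional representation $\rho$ of $\Gal(\Qbar/\QQ)$ with $\Tr\rho(\Frob_p)=a_p$ and $\det\rho(\Frob_p)=p^{\kappa-1}$ for $p\ne \ell$; to an Eisenstein eigenform of weight $\kappa$ one attaches the reducible representation $\mathbf{1}\oplus\chi^{\kappa-1}$, where $\chi$ is the $\ell$-adic cyclotomic character, again with $\det(\Frob_p)=p^{\kappa-1}$. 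Since $a_p(f)=a_p(g)$ on the cofinite, hence density-one, set $P$, Chebotarev density together with Brauer--Nesbitt shows the semisimplifications of the two representations are isomorphic. In the mixed Eisenstein-versus-cuspidal case this is impossible, an irreducible representation not being isomorphic to a reducible one; in the remaining cases the isomorphism forces equality of determinant characters, whence $p^{k-1}=p^{k'-1}$ for all $p\in P$ and therefore $k=k'$, a contradiction. (The Eisenstein-versus-Eisenstein case is even more elementary, since $1+p^{k-1}=1+p^{k'-1}$ for infinitely many $p$ already forces $k=k'$.)

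The main obstacle is this last step, and within it the mixed case: size bounds alone do not suffice, because a cuspidal weight-$k'$ eigenvalue obeying Deligne's bound $|a_p|\le 2p^{(k'-1)/2}$ can in principle match an Eisenstein eigenvalue $1+p^{k-1}$ for all $p$ when $k-1\le (k'-1)/2$. The clean resolution is the irreducibility of the Galois representation attached to a level-one cusp form. If one prefers to avoid Galois representations, the mixed case can instead be settled analytically: the Eisenstein eigenvalues $1+p^{k-1}$ are positive for every $p$, whereas the Hecke eigenvalues of a cuspidal eigenform change sign for infinitely many primes (a consequence of Sato--Tate equidistribution, known for level-one holomorphic eigenforms), which is incompatible with $a_p(g)=1+p^{k-1}>0$. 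I would also note that some genuinely characteristic-zero input is unavoidable: modulo a prime $q\notin P$ the Eisenstein eigensystems of weights $k$ and $k'$ coincide whenever $k\equiv k'\pmod{q-1}$, so no purely congruence-theoretic argument can separate the weights—indeed these very congruences are what the classes $\kappa^{E_{p-1}}_n$ later measure.
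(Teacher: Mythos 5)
Your proposal is correct and follows essentially the same route as the paper: identify $HH^0$ with $\hom_{\mathbf{A}_P}(M_k,M_{k'})$, diagonalize the Hecke action after extending scalars to a characteristic-zero field, and rule out a shared eigensystem in distinct weights via the attached Galois representations and Chebotarev density. The only differences are cosmetic (Brauer--Nesbitt versus the explicit $2\times 2$ trace--determinant identity the paper uses to compare determinants, plus your optional alternative treatments of the mixed Eisenstein/cuspidal case), and your remark about further diagonalizing the finitely many $T_q$ with $q\notin P$ is a welcome point of care that the paper's proof leaves implicit.
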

\begin{proof}
Let $\gamma \in HH^0(\mathbf{A}_P; \hom_{\mathbb{Z}[P^{-1}]}(M_k,M_{k'})) = \hom_{\mathbf{A}_P}(M_k,M_{k'})$. We want to show that $\gamma$ is the zero map. By extending scalars to $\overline{\QQ}$, we will do so by applying $\gamma$ to (normalized) Hecke eigenforms in $M_k\otimes \overline{\QQ}$. These come in two types:
\begin{itemize} 
\item[(1)] Cuspidal eigenforms $f$. In this case the Hecke eigenvalue of $T_{\ell}$ is an algebraic integer $a_{\ell}$ lying in a totally real number field $K$. There is a corresponding 2-dimensional irreducible Galois representation 
$$
\rho_{f}: \Gal(\overline{\QQ}/\QQ) \rightarrow \GL_2(K\otimes \mathbb{Q}_p)
$$
satisfying $\Tr\rho_{f}(\Frob_{\ell}) = a_{\ell}$ and $\det \rho_{f}(\Frob_{\ell})=\ell^{k-1}$, for all $\ell \neq p$. 

\item[(2)] Scalar multiples of the Eisenstein series $E_k$. In this case the Hecke eigenvalue of $T_{\ell}$ is $\sigma_{k-1}(\ell) = 1 + \ell^{k-1}$. The corresponding 2-dimensional Galois representation 
$$
\rho_{E_k}: \Gal(\overline{\QQ}/\QQ) \rightarrow \GL_2(\mathbb{Q}_p)
$$
decomposes as $1\oplus \chi^{k-1}_p$, where $\chi_p: \Gal(\overline{\QQ}/\QQ)  \rightarrow \ZZ_p^{\times}$ is the $p$-adic cyclotomic character, satisfying $\chi_p(\Frob_{\ell}) = \ell$, for all $\ell \neq p$. 

\end{itemize}

As is well-known, the space $M_k\otimes \overline{\QQ}$ has a basis of eigenforms $\{g_1, \ldots, g_{d_k}\}$. We will show that $\gamma$ is the zero map in this basis. Indeed, if $g$ is an eigenform then 
$$
\gamma (T_{\ell}g) = a_{\ell} \gamma(g) = T_{\ell}\gamma(g)
$$
for all Hecke operators $T_{\ell}$ with $\ell \in P$. This means that $h := \gamma(g) $ is an eigenform of weight $k'$ with the same eigencharacter $\lambda(T_{\ell}) = a_{\ell}$ as $g$, which is of weight $k$. This is impossible. Indeed let $K$ be the number field containing the Fourier coefficients of $h$ (which contains those of $g$ as well) and let 
$$
\rho_{h}, \rho_g : \Gal(\overline{\QQ}/\QQ) \rightarrow \GL_2(K\otimes \QQ_p)
$$
be the 2-dimensional $p$-adic Galois representations attached to $h,g$, respectively. If the eigencharacters of $h$ and $g$ are the same, then 
$$
\Tr \rho_g (\Frob_{\ell}) = \Tr \rho_h (\Frob_{\ell}), \quad \ell \in P-\{p\}.
$$
Since $P$ is cofinite, by the Chebotarev density theorem (\cite{Serre}, II.2.2) this means that the characters of the entire Galois representations are the same. Therefore 
$$
\ell^{k-1} = \det\rho_{g}(\Frob_{\ell}) = \det\rho_{h}(\Frob_{\ell}) = \ell^{k'-1}
$$
for all primes $\ell$, since for two-by-two matrices we have the identity $\det M  = (\Tr(M)^2 - \Tr(M^2))/2$. This implies that $k -1 = k'-1$, which is impossible since $k \neq k'$.
\end{proof}
\begin{corollary}\label{HH0 splitting}
Let $P$ be a set of primes which is cofinite, i.e., $P$ contains all but finitely many prime numbers. Let $M_*$ be the graded $\mathbb{Z}[P^{-1}]$-algebra of modular forms over $\mathbb{Z}[P^{-1}]$.
Then $HH^0(\mathbf{A}_P; \hom_{\mathbb{Z}[P^{-1}]}(M_*,M_*))$ splits as a direct sum
\[ HH^0(\mathbf{A}_P; \hom_{\mathbb{Z}[P^{-1}]}(M_*,M_*)) \cong 
 \coprod_{k\in\mathbb{Z}} HH^0(\mathbf{A}_P; \hom_{\mathbb{Z}[P^{-1}]}(M_k,M_k)).\]
\end{corollary}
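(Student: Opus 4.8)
The plan is to reduce the statement entirely to the vanishing result of Proposition \ref{prop:HH0}, using the weight grading to decompose an equivariant endomorphism into ``matrix components.'' First I would invoke the identification $HH^0(A; \hom_R(M,N))\cong \hom_A(M,N)$ recorded at the end of Definition \ref{bimodules definition 2}, which rewrites both sides of the claimed isomorphism as modules of $\mathbf{A}_P$-linear maps: the left-hand side becomes $\hom_{\mathbf{A}_P}(M_*,M_*)$, and the $k$-th summand on the right becomes $\hom_{\mathbf{A}_P}(M_k,M_k)$.

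Next I would exploit that each Hecke operator $T_p$ preserves weight, so that the weight decomposition $M_* = \bigoplus_{k} M_k$ is a decomposition of $M_*$ into $\mathbf{A}_P$-submodules. In particular, for each $k$ the inclusion $\iota_k\colon M_k \hookrightarrow M_*$ and the projection $\pi_k\colon M_*\twoheadrightarrow M_k$ are morphisms of $\mathbf{A}_P$-modules. Given any $\gamma \in \hom_{\mathbf{A}_P}(M_*,M_*)$, I would then form its components $\gamma_{k,k'} := \pi_{k'}\circ\gamma\circ\iota_k \in \hom_{\mathbf{A}_P}(M_k,M_{k'})$, each of which is again $\mathbf{A}_P$-linear because it is a composite of $\mathbf{A}_P$-linear maps.

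The heart of the argument is then Proposition \ref{prop:HH0}: it gives $\hom_{\mathbf{A}_P}(M_k,M_{k'}) = HH^0(\mathbf{A}_P; \hom_{\mathbb{Z}[P^{-1}]}(M_k,M_{k'})) = 0$ whenever $k\neq k'$, so every off-diagonal component $\gamma_{k,k'}$ vanishes. Since for $f \in M_k$ the element $\gamma(\iota_k f)\in \bigoplus_{k'}M_{k'}$ has only finitely many nonzero weight components, and each such component equals $\gamma_{k,k'}(f)$, the vanishing of the off-diagonal components forces $\gamma(\iota_k f) = \iota_k\gamma_{k,k}(f)$. Thus $\gamma$ preserves each weight space and is determined by the family $(\gamma_{k,k})_k$ of its diagonal parts, which yields the claimed splitting via $\gamma \mapsto (\gamma_{k,k})_k$.

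The only point that needs a little care is the bookkeeping of direct sums versus products. Maps out of the direct sum $\bigoplus_k M_k$ form a product of the various $\hom_{\mathbf{A}_P}(M_k,-)$, and any family $(\gamma_{k,k})_k$ of diagonal endomorphisms assembles, componentwise, into a single $\mathbf{A}_P$-linear endomorphism of $M_*$; conversely the off-diagonal vanishing shows that every $\gamma$ arises this way. I would also note that each $M_k$ is a finitely generated $\mathbb{Z}[P^{-1}]$-module, weight-$k$ forms being of finite rank, so that an $\mathbf{A}_P$-map out of $M_k$ automatically has image supported in finitely many weights; this is what lets one identify $\hom_{\mathbf{A}_P}(M_k,M_*)$ with the single diagonal term $\hom_{\mathbf{A}_P}(M_k,M_k)$. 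Beyond this indexing care, I expect no genuine obstacle: the corollary is essentially a repackaging of the vanishing statement of Proposition \ref{prop:HH0} into the language of the full endomorphism module.
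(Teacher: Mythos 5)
Your proof is correct and is exactly the argument the paper intends: the corollary is stated without proof as an immediate consequence of Proposition \ref{prop:HH0}, and your decomposition of an $\mathbf{A}_P$-equivariant endomorphism into weight components $\gamma_{k,k'}=\pi_{k'}\circ\gamma\circ\iota_k$, followed by the off-diagonal vanishing, is precisely how that implication goes. Your closing remark that the splitting is most naturally a product of the diagonal terms (with the finite generation of each $M_k$ handling maps out of a single weight) is a fair bookkeeping point about the paper's use of $\coprod$, but it does not affect the substance.
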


Corollary \ref{HH0 splitting} gives us some understanding of $HH^0(\mathbf{A}_P; \hom_{\mathbb{Z}[P^{-1}]}(M_*,M_*))$; in particular, it tells us that every nonzero element of $HH^0(\mathbf{A}_P; \hom_{\mathbb{Z}[P^{-1}]}(M_*,M_*))$ is a {\em weight-preserving} operation on modular forms.

Now we focus on gaining an understanding of $HH^1(\mathbf{A}_P; \hom_{\mathbb{Z}[P^{-1}]}(M_*,M_*))$. We begin abstractly, without the assumption that $M$ is a ring of modular forms or that $A$ is a Hecke algebra:
\begin{definition-proposition}\label{def of kappas}
Suppose $A$ is a commutative $R$-algebra, and $M$ is a left $A$-module equipped with the structure of a commutative $R$-algebra.
Suppose we are given an element $\mu\in R$ and an element $E\in M$ such that $M$ is $\mu$-torsion-free, and such that $TE \equiv ET$ modulo $\mu$ for all $T\in A$.
Then, for each positive integer $n$ and each prime number $p$ which divides $\mu$, we have a Hochschild $1$-cocycle $\phi^E_n: A \rightarrow \hom_R(M,M)$ given by $\phi^E_n(T) = \Delta_nT$, as defined in Definition-Proposition \ref{def of delta}. 
Let $\kappa^E_n\in HH^1(A; \hom_R(M,M))$ denote the cohomology class of $\phi^E_n$. 
\end{definition-proposition}
\begin{proof}
The claim that $\phi^E_n$ is a Hochschild $1$-cocycle is equivalent to the claim that $\Delta_n$ is an $R$-linear derivation, which follows from Observation \ref{derivation observation}.
\end{proof}
From Proposition \ref{interpretation of Dn}, we know that the condition that $\phi^E_n = 0$ is equivalent to the condition that the action of $T$ on $M$ commutes with multiplication by $E$ for all $T\in A$. 
In the case where $A$ is the abstract Hecke algebra over $\mathbb{Z}[P^{-1}]$ and $M = M_*$ is the ring of holomorphic modular forms, the vanishing of $\phi^E_n$ is equivalent to the action of every Hecke operator $T_{\ell}$, $\ell \in P$, on $M_*$ commuting with multiplication by the $n$th power of the modular form. Of course this condition almost always (depending on the choices of $P$ and $E$ and $n$) fails. 

Rather than asking for $\phi^E_n$ to be equal to zero, an apparently weaker condition is to ask that the Hochschild $1$-cocycle $\phi^E_n$ be merely cohomologous to zero, i.e, to ask that $\kappa^E_n = 0$. 
This is equivalent to the existence of some $R$-linear map $f: M_* \rightarrow M_*$ such that $Tf - fT = \phi^E_n$. Recall, from Definition-Proposition \ref{def of delta}, that $\mu\in R$ has the property that $\mu p^{\nu_p(n)}\phi^E_n$ is indeed a coboundary, since $\phi^E_n = \frac{1}{\mu p^{\nu_p(n)}} (Te^n - e^nT)$. Consequently $\mu p^{\nu_p(n)}\kappa^E_n = 0\in HH^1(A; \hom_R(M_*,M_*))$. 

We now specialize to the situation where $A = \mathbf{A}_P$ is the abstract Hecke algebra over $\mathbb{Z}[P^{-1}]$, $E$ is the Eisenstein series $E_{p-1}$, $\mu=p$, and $M = M_*$ is the ring of holomorphic modular forms of level $1$.
\begin{theorem}\label{nontriviality of all kappas}
Let $P$ be a cofinite set of primes. Then for each prime $p\notin P$ and each positive integer $n$,
the class of $\kappa^{E_{p-1}}_n$ is non-trivial and of order $p^{1+\nu_p(n)}$ in $HH^1\left(\mathbf{A}_P,\hom_{\mathbb{Z}[P^{-1}]}(M_*,M_*)\right)$.
\end{theorem}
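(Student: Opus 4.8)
The plan is to reduce the entire statement to a single non-vanishing assertion, and then to detect that non-vanishing by an integrality obstruction in which the grading plays an essential role.

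The discussion preceding the theorem already shows that $p^{1+\nu_p(n)}\kappa^{E_{p-1}}_n=0$: the cocycle $p^{1+\nu_p(n)}\phi^{E_{p-1}}_n$ equals the coboundary $\partial(e^n)$, where $e^n$ denotes multiplication by $E_{p-1}^n$. Hence the order of $\kappa^{E_{p-1}}_n$ is a power of $p$ dividing $p^{1+\nu_p(n)}$, and so the whole theorem (both non-triviality and the exact order) follows once I show that $p^{\nu_p(n)}\kappa^{E_{p-1}}_n\neq 0$: this forces the order to exceed $p^{\nu_p(n)}$, and the only power of $p$ which is both $>p^{\nu_p(n)}$ and a divisor of $p^{1+\nu_p(n)}$ is $p^{1+\nu_p(n)}$ itself. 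Unwinding Definition-Proposition \ref{def of delta}, the class $p^{\nu_p(n)}\kappa^{E_{p-1}}_n$ is represented by the cocycle $\psi(T)=\frac1p(Te^n-e^nT)$, so that $\psi(T)(f)=\frac1p\left(T(E_{p-1}^nf)-E_{p-1}^nT(f)\right)$. By Proposition \ref{interpretation of Dn} and Proposition \ref{prop: p-integrality} this is a well-defined $\mathbb{Z}[P^{-1}]$-linear cocycle, and it is homogeneous of internal (weight) degree $n(p-1)$, since multiplication by $E_{p-1}^n$ raises weight by $n(p-1)$ while each $T_\ell$ preserves weight.

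The key point is that the Hochschild cohomology is computed in the graded category, so any cochain trivializing $\psi$ must be a \emph{homogeneous} element of $\hom_{\mathbb{Z}[P^{-1}]}(M_*,M_*)$ of degree $n(p-1)$. I would argue by contradiction: suppose $\psi=\partial g$ for such a homogeneous integral $g$, i.e. $T_\ell g-gT_\ell=\psi(T_\ell)$ for all $\ell\in P$. After extending scalars to $\mathbb{Q}$, multiplication by $E_{p-1}^n/p$ is a degree-$n(p-1)$ endomorphism of $M_*\otimes\mathbb{Q}$ whose coboundary is also $\psi$, so $g-E_{p-1}^n/p$ represents an element of $HH^0\!\left(\mathbf{A}_P\otimes\mathbb{Q};\hom_{\mathbb{Q}}(M_*\otimes\mathbb{Q},M_*\otimes\mathbb{Q})\right)$ in degree $n(p-1)$. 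By Proposition \ref{prop:HH0} and Corollary \ref{HH0 splitting} (whose proof passes to $\overline{\mathbb{Q}}$, so the vanishing persists over $\mathbb{Q}$), the degree-$n(p-1)$ part of this $HH^0$ vanishes because $n(p-1)\neq 0$. Therefore $g$ must act rationally as multiplication by $E_{p-1}^n/p$. This is the desired contradiction: $E_{p-1}^n\equiv 1\pmod p$ by Proposition \ref{prop: p-integrality}, so $E_{p-1}^n$ is a $p$-adic unit and $E_{p-1}^n/p$ is not $\mathbb{Z}[P^{-1}]$-integral; concretely, applying $g$ to the weight-zero form $1\in M_0$ gives $g(1)=E_{p-1}^n/p$, whose constant Fourier coefficient is $1/p\notin\mathbb{Z}[P^{-1}]$ (recall $p\notin P$). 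Hence no homogeneous integral $g$ trivializes $\psi$, so $p^{\nu_p(n)}\kappa^{E_{p-1}}_n=[\psi]\neq 0$, and the exact order follows as above.

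I expect the main obstacle to be conceptual rather than computational, and it lies precisely in the use of the grading. Over $\mathbb{Z}[P^{-1}]$ there does exist a \emph{non-homogeneous} trivialization of $\psi$, namely multiplication by $(E_{p-1}^n-1)/p$, which is $p$-integral since $E_{p-1}^n\equiv 1\pmod p$; it is only the requirement that the trivializing cochain be homogeneous of degree $n(p-1)$, together with the failure of its degree-$n(p-1)$ component $E_{p-1}^n/p$ to be integral, that makes $\psi$ cohomologically non-trivial. Getting this bookkeeping correct --- and in particular justifying that the rational primitive of $\psi$ is \emph{unique} via the vanishing of $HH^0$ in nonzero degree --- is the crux of the argument, and it is exactly here that the cofiniteness of $P$ and the Chebotarev/Galois-representation input of Proposition \ref{prop:HH0} are needed.
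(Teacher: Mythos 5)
Your proof is correct and follows essentially the same route as the paper's: reduce to showing $p^{\nu_p(n)}\kappa^{E_{p-1}}_n\neq 0$, note that any rational homogeneous primitive of the cocycle $T\mapsto\frac{1}{p}\left(Te^n-e^nT\right)$ must equal multiplication by $E_{p-1}^n/p$ by the $HH^0$-vanishing of Proposition \ref{prop:HH0} (via the Chebotarev argument), and conclude from the non-integrality of $E_{p-1}^n/p$. Your explicit observation that the non-homogeneous operator ``multiplication by $(E_{p-1}^n-1)/p$'' is an integral primitive of the same cocycle --- so that the restriction to homogeneous cochains of internal degree $n(p-1)$ is doing genuine work --- is left implicit in the paper's proof and is a worthwhile clarification.
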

\begin{proof}
In this case $\mu=p$, and so the triviality of the cohomology class $p^{1+\nu_p(n)}\kappa^{E_{p-1}}_n$ has already been established. 
To show that the $p^{\nu_p(n)}\kappa^{E_{p-1}}_n$ is non-trivial, suppose there exists a $\mathbb{Z}[P^{-1}]$-linear map $f:M_* \rightarrow M_*$ such that 
$\frac{1}{p}\left(T E^n_{p-1}  - E^n_{p-1}T \right) = Tf - fT.$
Rearranging terms, we get 
$(\frac{E^n_{p-1}}{p} - f)T = T(\frac{E^n_{p-1}}{p} - f)$, so that the map $\frac{E^n_{p-1}}{p} - f$ is $\mathbf{A}_P$-equivariant and must send $M_k$ to $M_{k+n(p-1)}$. By Proposition \ref{prop:HH0} and the fact that inverting a prime commutes with Hochschild cohomology, this means that 
$f = \frac{E^n_{p-1}}{p}$,
that is, $f: M_* \rightarrow M_*$ must be the map given by $g \mapsto \frac{1}{p}E^n_{p-1}g$. But this map does not land in $M_{k'}$, since $p$ does not divide $E^n_{p-1} = 1 + \ldots $, so no such homomorphism $f$ exists.
\end{proof}

\begin{remark}
As a consequence of Theorem \ref{nontriviality of all kappas}, we have summands in \linebreak $HH^1(\mathbf{A}_P; \hom_R(M_*,M_*))$ which exhibit the same familiar Kummer congruence pattern as the denominators of the special values of $\zeta(s)$ at negative integers, the Adams-Novikov $1$-line, the first flat cohomology $H^1_{fl}(\mathcal{M}_{fg}; \omega^*)$ of the moduli stack of one-dimensional formal groups, the image of the Whitehead $J$-homomorphism in the stable homotopy groups of spheres, and so on.
It seems plausible that the summands of $HH^1(\mathbf{A}_P; \hom_R(M_*,M_*))$ generated by the cohomology classes $\kappa^{E_{p-1}}_n$ for various $n$ and $p\notin P$ exhaust all of $HH^1(\mathbf{A}_P; \hom_R(M_*,M_*))$, but we have not tried to prove that. We also do not make any attempt to compute $HH^n(\mathbf{A}_P; \hom_R(M_*,M_*))$ for $n>1$, and we do not know if there are summands in $HH^n(\mathbf{A}_P; \hom_R(M_*,M_*))$ for $n>1$ whose orders are number-theoretically or topologically meaningful.
\end{remark}

\subsection{Computation of derived Hecke eigenforms in degree one}

Next, we provide a method to produce explicit derived eigenforms in $HH^1$. Recall that, given a commutative ring $R$ and an $R$-algebra $A$, we write $A^{\epsilon}$ for the {\em enveloping $R$-algebra of $A$}, that is, $A^{\epsilon} = A\otimes_R A^{\op}$. We have a natural equivalence between left $A^{\epsilon}$-modules and $A$-bimodules, and $HH^n(A,M) \cong \Ext^n_{A^{\epsilon}/R}(A,M)$, where $\Ext^n_{A^{\epsilon}/R}$ is $\Ext$ relative to the allowable class consisting of the $R$-split sequences of $A^{\epsilon}$-modules; see e.g. Lemma 9.1.3 of \cite{MR1269324}.
\begin{prop}
Let $R$ be a commutative ring, let $S$ be a set, and let $A$ be the symmetric $R$-algebra on $S$, i.e., $A$ is the polynomial $R$-algebra on the set of generators $S$.
Given an $A$-bimodule $M$, we have isomorphisms of $A$-modules
\begin{align}
\label{iso 340958}  
 HH^1(A; M)
   &\cong \Ext^1_{A^{\epsilon}}(A,M) \\
\label{iso 340959}   &\cong M^{\prime}/M^{\prime\prime},
\end{align}
where $M^{\prime}$ is the submodule of $\prod_{s_0,s_1} M \{ e_{s_0,s_1}\}$ consisting of elements $\prod_{s_0,s_1\in S} m_{s_0,s_1}e_{s_0,s_1}$ satisfying 
\[ s_0m_{s_1} - m_{s_1}s_0 - s_1m_{s_0} + m_{s_0}s_1 =0\]
for all $s_0,s_1$, and $M^{\prime\prime}$ is the submodule of $M^{\prime}$ consisting of the 
elements of $\prod_{s_0,s_1} M \{ e_{s_0,s_1}\}$ of the form $\prod_{s\in S} (sm-ms)e_s$ for some $m\in M$. 
\end{prop}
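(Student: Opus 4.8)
The plan is to establish the second isomorphism \eqref{iso 340959} by building an explicit relative-projective resolution of $A$ as an $A^{\epsilon}$-module adapted to the polynomial structure, and then reading off $\Ext^1$ from the associated cochain complex. The first isomorphism \eqref{iso 340958} requires no work beyond what is recalled in the excerpt: it is the relative-$\Ext$ description $HH^n(A;M)\cong \Ext^n_{A^{\epsilon}/R}(A,M)$ of Lemma 9.1.3 of \cite{MR1269324}, where the allowable class is that of $R$-split sequences.

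For the second isomorphism I would use the Koszul resolution of $A$ over its enveloping algebra. Writing $V = RS$ for the free $R$-module on the generating set, the relevant initial segment is
$$ A^{\epsilon}\otimes_R \Lambda^2 V \xrightarrow{\ d_2\ } A^{\epsilon}\otimes_R V \xrightarrow{\ d_1\ } A^{\epsilon} \xrightarrow{\ \mathrm{mult}\ } A \to 0, $$
with $d_1(1\otimes e_s) = s\otimes 1 - 1\otimes s$ and $d_2$ the usual Koszul differential $d_2(1\otimes e_{s_0}\wedge e_{s_1}) = (s_0\otimes 1 - 1\otimes s_0)\otimes e_{s_1} - (s_1\otimes 1 - 1\otimes s_1)\otimes e_{s_0}$. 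Each term is a free $A^{\epsilon}$-module, hence allowable-projective, and $\mathrm{mult}$ is $R$-split by $a\mapsto a\otimes 1$, so the complex lies in the allowable class. The crux is relative exactness: the elements $s\otimes 1 - 1\otimes s$ form a regular sequence in $A^{\epsilon}$ precisely because $A$ is a polynomial algebra, so the Koszul complex on them resolves $A^{\epsilon}/(s\otimes 1 - 1\otimes s : s\in S) \cong A$.

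Then I would apply $\Hom_{A^{\epsilon}}(-,M)$. Using the adjunction $\Hom_{A^{\epsilon}}(A^{\epsilon}\otimes_R \Lambda^k V, M)\cong \Hom_R(\Lambda^k V, M)$, the degree-one cochains become $\Hom_R(V, M)\cong \prod_{s\in S} M$, whose elements I write as $\prod_s m_s e_s$; this identifies the middle terms of the cochain complex with the ambient product in the statement. A direct computation of the dualized differentials yields exactly the two modules in the statement: the coboundary $d^0\colon M\to \prod_s M$ sends $m$ to $\prod_s (sm - ms) e_s$, so $\im d^0 = M^{\prime\prime}$; and the cocycle equation $d^1 = 0$ evaluated on $e_{s_0}\wedge e_{s_1}$ reads $s_0 m_{s_1} - m_{s_1}s_0 - s_1 m_{s_0} + m_{s_0}s_1 = 0$, so $\ker d^1 = M^{\prime}$. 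Hence $\Ext^1_{A^{\epsilon}}(A,M)\cong \ker d^1/\im d^0 = M^{\prime}/M^{\prime\prime}$, as wanted. The antisymmetry of the cocycle equation means that indexing the relations by ordered pairs $(s_0,s_1)$ or by $\Lambda^2 V$ makes no difference.

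The main obstacle is the relative exactness of the Koszul complex over $A^{\epsilon}$, particularly when $S$ is infinite: one must argue that $\{s\otimes 1 - 1\otimes s\}_{s\in S}$ is regular in $A^{\epsilon} = A\otimes_R A$ without Noetherian hypotheses, and confirm the $R$-split exactness demanded by the relative $\Ext$. A clean alternative that avoids resolutions entirely, and which I would fall back on if the regularity argument becomes delicate, is the direct derivation description of $HH^1$: a class in $HH^1(A;M)$ is an $R$-derivation $D\colon A\to M$ modulo the inner derivations $m\mapsto (a\mapsto am - ma)$. Writing $A = T_R(V)/(xy - yx)$, a derivation out of the tensor algebra is freely determined by the values $m_s := D(s)$, and it descends to $A$ if and only if $D(xy-yx) = 0$ for all generators, i.e. $s_0 m_{s_1} - m_{s_1}s_0 - s_1 m_{s_0} + m_{s_0}s_1 = 0$; since commutators act as zero on the bimodule $M$, killing these generating relations forces $D$ to kill the entire relation ideal. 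This recovers $M^{\prime}$ as the derivations and $M^{\prime\prime}$ as the inner derivations directly, again giving $HH^1(A;M)\cong M^{\prime}/M^{\prime\prime}$.
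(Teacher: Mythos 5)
Your proposal is correct and follows essentially the same route as the paper: resolve $A$ by a Koszul-type complex of free $A^{\epsilon}$-modules whose first differential sends $e_s$ to $s\otimes 1 - 1\otimes s$, apply $\hom_{A^{\epsilon}}(-,M)$, and identify $\ker d^1/\im d^0$ with $M^{\prime}/M^{\prime\prime}$; the dualized differentials you write down agree with the paper's, and your remark that indexing by ordered pairs versus $\Lambda^2 V$ is immaterial for $\Ext^1$ is also how the paper implicitly treats the issue. The one point of genuine divergence is the justification of exactness of the resolution, which you correctly single out as the main obstacle: instead of arguing that $\{s\otimes 1 - 1\otimes s\}_{s\in S}$ is a regular sequence in $A\otimes_R A$ (delicate for infinite $S$ and without Noetherian hypotheses), the paper builds the resolution as the $\#(S)$-fold tensor power of the one-variable resolution $D^{\bullet} = (R[x]^{\epsilon})[y]/y^2$ with $d(y) = x\otimes 1 - 1\otimes x$, whose homology is visibly $R[x]$ concentrated in degree $0$; exactness of the tensor power then follows because each factor is a complex of free modules with free homology. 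This is precisely the maneuver that closes the gap you flagged, and it works for arbitrary $S$. Your fallback via derivations modulo inner derivations is a legitimate and more elementary alternative (the standard identification of $HH^1(A;M)$ with $R$-derivations modulo inner ones, specialized to a polynomial algebra), though it yields only the degree-one statement rather than a resolution usable in higher cohomological degrees.
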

\begin{proof}
Isomorphism \eqref{iso 340958} is an easy consequence of freeness of $A$ as an $R$-module; see e.g. Corollary 9.1.5 of \cite{MR1269324}. The rest of this proof will be concerned with isomorphism \eqref{iso 340959}.
Let $D^{\bullet}$ be the differential graded $R[x]^{\epsilon}$-algebra $(R[x]^{\epsilon})[y]/y^2$ with $\left| y\right| = 1$, and with differential $d(y) = x\otimes 1 - 1\otimes x$, Then the homology of $D^{\bullet}$ is $R[x]$ concentrated in grading degree $0$, i.e., $D^{\bullet}$ is a free $R[x]^{\epsilon}$-module resolution of $R[x]$. Consequently, given a set $S$,
if we write $\#(S)$ for the cardinality of $S$, and  
if we write $A$ for the $\#(S)$-fold tensor power $R[x]^{\otimes_R \#(S)}$,
then the $\#(S)$-fold tensor power $(D^{\bullet})^{\otimes_R \#(S)}$ of $D^{\bullet}$ is a DGA which is a free $A^{\epsilon}$-module resolution of $A$.
This resolution, in low degrees, is:
\begin{equation}\label{resolution 34091} 0 \leftarrow A^{\epsilon} \stackrel{d_{0}}{\longleftarrow} \coprod_{s\in S} A^{\epsilon}\{ e_s\} \stackrel{d_1}{\longleftarrow} \coprod_{s_0,s_1\in S} A^{\epsilon}\{ e_{s_0,s_1}\} \stackrel{d_{2}}{\longleftarrow} \dots \end{equation}
with differentials
\begin{align*}
 d_0(e_s) &= s\otimes 1 - 1\otimes s \\
 d_1(e_{s_0,s_1}) &= (s_0\otimes 1 - 1\otimes s_0) e_{s_1} - (s_1\otimes 1 - 1\otimes s_1) e_{s_0} .
\end{align*}
Now given an $A$-bimodule $M$, we regard $M$ as an $A^{\epsilon}$-module, and we apply $\hom_{A^{\epsilon}}(-, M)$ to \eqref{resolution 34091} to get the cochain complex
\[
 0 \rightarrow M \stackrel{d^0}{\longrightarrow} \prod_{s\in S} M\{ e_s\} \stackrel{d^1}{\longleftarrow} \prod_{s_0,s_1\in S} M\{ e_{s_0,s_1}\} \]
with 
\begin{align*}
 d^0(m) &= \sum_{s\in S} (sm-ms)e_s \\
 d^1(\sum_{s\in S}m_se_s) &= \sum_{s_0,s_1\in S} (s_0m_{s_1} - m_{s_1}s_0 - s_1m_{s_0} + m_{s_0}s_1 )e_{s_0,s_1}
\end{align*}
So we have an isomorphism between $\Ext^1_{A^{\epsilon}}(A, M)$ and $\ker d^1/\im d^0$. 
\end{proof}

Thanks to this result, we can give an explicit description of derived eigenforms belonging to $HH^1$, as follows. Let $P \subseteq \Pi$ be a set of primes, let $\mathbf{A}_P$ be the $\ZZ[P^{-1}]$-algebra generated by the Hecke operators $T_p$, $p\in P$, as above. Let $R$ be a commutative ring, let $\lambda: \mathbf{A}_P\otimes_{\ZZ} R\rightarrow R$ be an eigencharacter and let $M = (M_k\otimes_{\mathbb{Z}} R)^{\lambda}$ as above. Then the derived eigenforms in $HH^1(\mathbf{A}_P\otimes_{\ZZ} R; (M_k\otimes_{\mathbb{Z}} R)^{\lambda})$ consist of the submodule $M'$ of formal sums of pairs $\sum_{p,\ell \in P}(f_p, f_{\ell}) \in \prod_{p,\ell \in P} (M_k\otimes R)^2$ such that 
\begin{equation}
\label{eqn:cochainCondition}
T_p\,f_{\ell} - \lambda(T_p)\,f_{\ell} = T_{\ell}\,f_p - \lambda(T_{\ell})\,f_p
\end{equation}
modulo the submodule $M''$ of ``principal'' elements consisting of the formal sums
$$
\sum_{p,\ell\in P}(f_p, f_{\ell}) = (T_p\,f -\lambda(T_p)\,f, T_{\ell}\,f - \lambda(T_{\ell})\,f)
$$ 
corresponding to some fixed $f\in M_k\otimes R$. 

For example, when $\mathrm{rk}_{\ZZ} M_k = 1$ (so that $k=0,4,6,8,10,14$) and the eigencharacter $\lambda:\mathbf{A}_{\Pi} \rightarrow \QQ$  factors through the weight $k$ Hecke algebra $\mathbf{A}_k\otimes_{\mathbb{Z}}\QQ$ (i.e. it is an Eisenstein series eigencharacter given by $\lambda(T_p) = 1 + p^{k-1}$), then every element $f \in M_k\otimes_{\mathbb{Z}}\QQ$ is a $\lambda$-eigenform; therefore condition \eqref{eqn:cochainCondition} is always satisfied, so that  $M' \simeq \prod_{p,\ell \in \Pi} (M_k\otimes_{\mathbb{Z}}\QQ)^2$ and $M'' = 0$.

Next, fix a prime $p>3$ and let $P = \Pi - \{p\}$, so that instead of inverting every prime we invert all but one prime. The following example shows that in this case it is possible for $HH^1$ to be non-vanishing, i.e. that there are non-trivial derived Hecke eigenforms in degree one. 

\begin{example}\label{derived eigenform example} 

Let 
$$
\Delta = q\prod_{n=1}^{\infty} (1 - q^n)^{24} = \sum_{n=1}^{\infty} \tau(n)q^n,\quad\quad  q = e^{2\pi i z}, \Im[z]>0
$$
be the unique normalized cuspidal eigenform in $S_{12}$, with associated eigencharacter $\tau$, taking values in $\ZZ$. Let $p \geq 5$ be prime, let $P = \Pi - \{p\}$ and consider $\tau$ as an eigencharacter
$$\tau: \mathbf{A}_P \rightarrow \ZZ[P^{-1}] = \ZZ_{(p)}.$$
Let $k = 12 + p -1 = 11 + p$ and as usual denote by $T_{k,\ell}$ the classical (level one) weight $k$, $\ell$-Hecke operator. The operator $T_{k,\ell} - \tau(\ell)$ is integral on $M_k\otimes \ZZ_{(p)}$, since $T_{k,\ell}$ is integral and $\tau(\ell)\in \ZZ$. Therefore $\det(T_{k,\ell} - \tau(\ell)) \in \ZZ$. The operator $T_{k,\ell} - \tau(\ell)$ is invertible on $M_{k}\otimes_{\mathbb{Z}} \ZZ_{(p)}$ if and only if $p\nmid \det(T_{k,\ell} - \tau(\ell))$. This is never the case, that is, 
$$
p \mid \det(T_{k,\ell} - \tau(\ell)) \quad \forall \ell \in P
$$
Indeed, consider the modular form $f = \Delta\cdot E_{p-1} \in M_{k}\otimes_{\mathbb{Z}} \ZZ_{(p)}$. This is not an eigenform in $M_k$, as can be verified by direct computation. However, it is an eigenform for the mod $p$ Hecke algebra acting on $M_k\otimes_{\mathbb{Z}} \mathbb{F}_p$. Indeed, note that 
\begin{itemize}
\item [(1)] $T_{k,\ell} \equiv T_{12,\ell} \mod p$ for all primes $\ell$. For $\ell \neq p$, this follows from the explicit formula
$$
T_{k,\ell}(\sum a_n q^n) = \sum a_{n\ell} q^n + \ell^{k}\sum a_n q^{n\ell}
$$
recalling that $\ell^k \equiv \ell^{12}$ by Fermat's Little Theorem. For $p = \ell$, we have the congruence 
$$
T_{k,p}(\sum a_n q^n) \equiv \sum a_{np} q^n \equiv T_{12,p}(\sum a_n q^n) \mod p
$$
\item[(2)] $E_{p-1} \equiv 1 \mod p$. 
\end{itemize}
Therefore the operator $T_{k,\ell} - \tau(\ell)$ is never invertible mod $p$ and the above vanishing argument cannot be applied. Indeed, note that we know  from Theorem \ref{nontriviality of all kappas} that the group 
$$
HH^1\left(\mathbf{A}_P; \left( M_k\otimes_{\mathbb{Z}}\ZZ_{(p)}\right)^{\tau}\right)
$$
is non-zero, since it contains the non-trivial $p$-torsion element $\kappa_1^{E_{p-1}} \cupdot \Delta$, a derived eigenform in degree one. Explicitly, this class is represented by the derived eigenform whose components $f_{\ell}$, for every prime $\ell \in P$, are given by 
$$
f_{\ell} = \frac{1}{p}\left( T_{k,\ell}(\Delta\, E_{p-1}) - \tau(\ell)\Delta E_{p-1} \right).
$$
Note that $p f$ is indeed principal, since $pf_{\ell} =   T_{k,\ell}f - \tau(\ell)f $ with $f = \Delta\,E_{p-1} \in M_k \otimes \ZZ_{(p)}$. 
\end{example}

\section{Review of established ideas from modular forms and elliptic homology.}
\label{Review of established...}

\subsection{Review of some well-known level $1$ elliptic homology theories.}
Here is a definition from \cite{MR1037690} which is a slight refinement of that from \cite{MR970281}:
\begin{definition}\label{def of Ell}
Let $\Ell_*$ be the graded ring $\mathbb{Z}[\frac{1}{6}][g_2,g_3][\Delta^{-1}]$ of weakly holomorphic modular forms, with gradings
\begin{align*}
 \left| g_2\right| &= 8,\mbox{\ \ and} \\
 \left| g_3\right| &= 12,
\end{align*}
and where $\Delta$ is the discriminant of the Weierstrass cubic $Y^2 = 4X^3 - g_2X - g_3$, i.e., $\Delta = g_2^3 - 27g_3^2$. Equip $\Ell_*$ with the structure of a $MU_*$-algebra via the ring map $MU_*\stackrel{\cong}{\longrightarrow} L \rightarrow \Ell_*$ classifying the formal group law of the Weierstrass cubic $Y^2 = 4X^3 - g_2X - g_3$. The resulting graded $MU_*$-module $\Ell_*$ is Landweber exact, and by {\em weakly holomorphic elliptic homology} 
we mean the Landweber exact homology theory $\Ell_*(-) \cong MU_*(-)\otimes_{MU_*}\Ell_*$. 
\end{definition}
Put another way, $\Ell_*$ is the generalized homology theory represented by the spectrum of ``good reduction topological modular forms'' with $6$ inverted, $TMF[\frac{1}{6}]$.

In particular, since $\Delta$ is inverted in $\Ell_*$, for $p>3$ the $p$-localization of $\Ell_*$ is the ``good reduction topological modular forms'' generalized homology theory $(TMF_{(p)})_*$ whose global model is $TMF_*$, which is a coarser invariant than the ``semistable reduction topological modular forms'' homology theory whose global model is $Tmf_*$, and coarser still than the connective homology theory $tmf_*$. After inverting $6$, 
we have: 
\begin{definition} \label{def of elll-cohomology}
Let $\elll_*(-)$ be the generalized homology theory obtained by inverting $6$ in $\tmf_*$, so that $\elll_*(S^0)$ is the graded ring 
of holomorphic modular forms over $\mathbb{Z}[\frac{1}{6}]$, with gradings equal to those in Definition \ref{def of Ell}, i.e., the grading of a modular form is equal to $2$ times its weight.
We refer to $\elll_*$ as {\em holomorphic elliptic homology}.
\end{definition}

\begin{remark}\label{remark on why 6 is inverted 1}
It is well known that $tmf$ and $Tmf$ have special behavior at the primes $2$ and $3$, and in particular both of $\pi_*(tmf)$ and $\pi_*(Tmf)$ have $2$-torsion and $3$-torsion elements, hence cannot coincide with the ring of holomorphic modular forms unless $6$ is inverted. Throughout this paper, we almost always work with $6$-inverted $tmf$-homology, i.e., holomorphic elliptic homology. Here are two reasons why:
\begin{itemize}
\item
Baker gives a construction of a topological Hecke operator $\tilde{T}_n$ on a generalized homology theory in which $n$ is inverted. Baker's construction is described below in \cref{Baker's Hecke operators...}. This construction depends critically on the dual generalized cohomology theory being {\em complex-oriented}. 
The generalized cohomology theories $\tmf^*$ and $\Tmf^*$ do not become complex oriented until $6$ is inverted. In the literature one can find other constructions of Hecke operators as unstable operations, e.g. as a power operation in \cite{MR2219307} and \cite{MR1637129}, 
but these are not stable operations. We do not know any way to construct Hecke operators as stable cohomology operations on $\tmf$ or $\Tmf$ without inverting $6$, although we do not know any reason why this should be impossible.
\item
However, in Remark \ref{remark on why 6 is inverted 2}, we give a geometric argument why, even if Hecke operators as stable operations on $\tmf$ {\em can} be constructed without inverting $6$, the resulting Hecke operators must act on the $2$-torsion and $3$-torsion in $\tmf$-homology in a way which is determined by the action of $\mathbb{F}_p^{\times}$ on $\Gamma_1(p)$ by Diamond operators, and consequently is trivial for a positive-density set of primes $p$. Consequently the computation of an action of topological Hecke operators on $2$-torsion and $3$-torsion in $tmf$ winds up being trivial for many primes, and at primes where the Hecke action is nontrivial, its effect on $2$-cell complexes is algebraically approachable using tools similar to those used in the present paper. 
\end{itemize}
Our understanding from \cite{daviesinpreparation} is that J. M. Davies is preparing a paper with positive results on the construction of Hecke operators on $tmf$, and that Davies is considering the action of the Hecke operators on the $2$-torsion and $3$-torsion in $tmf$-homology. 
\end{remark}

\subsection{Adams operations and topological Hecke operators.}
\label{Baker's Hecke operators...}

\begin{definition}
Suppose that $E_*$ is a generalized cohomology theory, and suppose that $P\subseteq \Pi$ is a set of primes.
\begin{itemize}
\item 
We say that $E^*$ {\em has Adams operations in $P$} if $E^n(S^0)$ vanishes for all odd integers $n$, and if, for each prime $p\in P$, we have made a choice of stable natural transformation $\Psi^p: E^*(-) \rightarrow E^*(-)$ such that:
\begin{itemize}
\item each $\Psi^p$ preserves grading degrees, 
\item each $\Psi^p$ is multiplicative, i.e., $\Psi^p: E^*(X) \rightarrow E^*(X)$ is a ring endomorphism for every pointed space $X$, and
\item for each integer $k$, the operation $\Psi^p: E^{2k}(S^0) \rightarrow E^{2k}(S^0)$ agrees with\footnote{This requires, in particular, that ``multiplication by $p^{-k}$'' is defined on $E^{2k}(S^0)$. When this happens, it is usually because either $E^n(S^0)$ is trivial for $n>0$, or because $E^*(\pt)$ is a $\mathbb{Z}[\frac{1}{p}]$-module.} multiplication by $p^{-k}$. 
\end{itemize}
\item 
Suppose that $E^*$ has Adams operations in $P$. We say that $E^*$ {\em has Hecke operators in $P$} if, for each $n \in \mathbb{N}$ such that $1/n \in \ZZ[P^{-1}]$, we have made a choice of degree-preserving stable natural transformation 
$\tilde{T}_n: E^*(-) \rightarrow E^*(-)$ 
such that 
\begin{align}
\label{hecke relation 0} \tilde{T}_{mn} &= \tilde{T}_m\circ \tilde{T}_n, \quad (m,n) = 1\ \mbox{\ \ \ and} \\
\label{hecke relation} \tilde{T}_{p^{r+2}}(x) &= \tilde{T}_p\left(\tilde{T}_{p^{r+1}}(x)\right) - \frac{1}{p}\Psi^p\tilde{T}_{p^r}(x) \quad p\in P,\ r\geq 0,\ x\in E^*(X).\end{align}
\end{itemize}
\end{definition}


\begin{observation}\label{brown rep observation}
If $E^*$ has Hecke operators in $P$, then Brown representability allows us to represent the actions of $\Psi^p$ and of $\tilde{T}_p$ on $E^*$ by (defined only up to homotopy) maps $\mathbb{\Psi}^p: E \rightarrow E$ and $\mathbb{T}_p: E\rightarrow E$ of the representing spectrum $E$ of $E$-cohomology. Smashing with $\mathbb{\Psi}^p$ or with $\mathbb{T}_p$ before applying $\pi_*$ then yields an action of $\Psi^p$ and of $\tilde{T}_p$ on $E_*(-)$. Consequently, if $E^*$ has Hecke operators, then we get Adams and Hecke operations on $E$-homology as well. 
The Hecke operations in $E$-homology still satisfy relations \eqref{hecke relation 0} and \eqref{hecke relation}. Since cohomological degrees are equal to $-1$ times homological degrees, the Adams operation $\Psi^p$ on $E_{2k}(S^0)$ is multiplication by $p^k$ rather than $p^{-k}$. 
\end{observation}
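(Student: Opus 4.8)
The plan is to reduce everything to the representability of stable operations by spectrum maps and then to transport the already-assumed cohomological relations to homology by pure naturality; the only delicate point is the sign in the grading. First I would make the representability input precise: because $E$-cohomology is represented by the spectrum $E$, a degree-preserving stable cohomology operation is the same datum as a homotopy class of spectrum maps, i.e.\ an element of $E^0(E)=[E,E]$. Applying this to $\Psi^p$ and to each $\tilde{T}_n$ produces $\mathbb{\Psi}^p,\mathbb{T}_n\colon E\to E$, each well defined up to homotopy. Under this dictionary, composition of operations corresponds to composition of maps in the same order (the operation attached to $f$ sends $x\colon X\to E$ to $f\circ x$), the additive structure matches, and the scaling by $1/p$---which is meaningful because $p\in P$---matches the corresponding structure on $[E,E]$. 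Hence relations \eqref{hecke relation 0} and \eqref{hecke relation}, holding among the cohomology operations, hold verbatim among the homotopy classes $\mathbb{T}_n,\mathbb{\Psi}^p$ in $[E,E]$.

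Next I would produce the homology operations and transfer the relations. For a spectrum $X$, set $E_*(X)=\pi_*(E\wedge X)$, and to $f\in[E,E]$ assign the operation $\alpha\mapsto \pi_*(f\wedge\id_X)(\alpha)$; naturality in $X$ is immediate from functoriality of $-\wedge X$ and of $\pi_*$, and only the homotopy class of $f$ is used, so the ambiguity in the representing maps is harmless. The key formal fact is that the assignment $[E,E]\to\End\bigl(E_*(-)\bigr)$ is a homomorphism of rings that respects the scaling by $1/p$: it respects composition since $(f\wedge\id_X)\circ(g\wedge\id_X)=(f\circ g)\wedge\id_X$, and respects addition since smashing is biadditive in the stable homotopy category and $\pi_*$ is additive. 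Applying this homomorphism to the relations for $\mathbb{T}_n,\mathbb{\Psi}^p$ established in the previous paragraph yields relations \eqref{hecke relation 0} and \eqref{hecke relation} for the induced homology operations.

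Finally, for the grading statement I would specialize to $X=S^0$, where $E_{2k}(S^0)=\pi_{2k}(E)$ and the induced operation is simply $\pi_{2k}(\mathbb{\Psi}^p)$. This is where the only genuinely delicate bookkeeping occurs: under the canonical identification $\pi_{2k}(E)=E^{-2k}(S^0)$, this self-map of $\pi_{2k}(E)$ is exactly the cohomology operation $\Psi^p$ acting on $E^{-2k}(S^0)$, so the Adams axiom that $\Psi^p$ act as multiplication by $p^{-j}$ on $E^{2j}(S^0)$, read with $j=-k$, forces multiplication by $p^{-(-k)}=p^{k}$. The hazard to guard against is precisely an inconsistent use of the homological-versus-cohomological sign convention here (or an order-of-composition slip in the argument above); once those conventions are pinned down as stated, the exponent $-k$ flips to $+k$ and every assertion of the observation follows by formal functoriality.
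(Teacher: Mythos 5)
Your argument is correct and is essentially the same as the paper's, which states this Observation without further proof beyond the very sketch you have expanded: represent the stable operations by homotopy classes of self-maps of $E$ via Brown representability, smash with $X$ and apply $\pi_*$ to get homology operations satisfying the same relations, and flip the sign of the grading to turn $p^{-k}$ into $p^{k}$ on $E_{2k}(S^0)=E^{-2k}(S^0)$. The only point you (and the paper) gloss over is that the representing map is unique only up to phantom maps rather than being literally "the same datum," but since phantom maps induce zero on $\pi_*(E\wedge X)$, this does not affect any of the conclusions.
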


\begin{remark}\label{adams ops remark}
In Proposition 6 of \cite{MR1037690}, Baker explains how to use the image of the $p$-series of the universal formal group law on $MU_*$ to get that $\Ell^*[P^{-1}]$ has Adams operations in $P$.
Baker's construction also yields, for each $p$, a multiplicative degree-preserving Adams operation $\Psi^p$ on $\elll[\frac{1}{p}]^*$, and its effect on $\elll[\frac{1}{p}]^{2k}(\pt)$ agrees with multiplication by $p^{-k}$ since the same is true on $\Ell[\frac{1}{p}]^{2k}$ and since the natural transformation $\elll[\frac{1}{p}]^* \rightarrow \Ell[\frac{1}{p}]^*$ commutes with the Adams operations and is injective when evaluated on $S^0$.
Consequently, for any set of primes $P$, $\elll[P^{-1}]^*$ has Adams operations in $P$.
\end{remark}

Let now $\mathbf{A}_P$ be the abstract Hecke algebra for $P$, as defined above in Definition \ref{def:abstract Hecke for P}. In Theorem 7 of \cite{MR1037690}, Baker also proved the more difficult result:
\begin{theorem}\label{baker thm on hecke action}
For every set $P$ of primes, $P$-inverted weakly holomorphic elliptic cohomology $\Ell[P^{-1}]^*$ has Hecke operators in $P$. The action of $\tilde{T}_p$ on $\Ell[P^{-1}]^{-2k}(S^0) \cong \Ell[P^{-1}]_{2k}(S^0)$ coincides with the action of the classical Hecke operator $T_p$ on the weight $k$ weakly holomorphic modular forms.
\end{theorem}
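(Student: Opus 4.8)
The plan is to follow Baker's strategy, which exploits the decisive fact (Definition \ref{def of Ell}) that once $6$ is inverted, $\Ell[P^{-1}]^*$ is complex orientable, so that its stable degree-preserving operations are controlled by the formal group law of the universal Weierstrass cubic and by the associated Hopf algebroid of co-operations. Concretely, I would build $\tilde{T}_p$, for each $p\in P$, from the correspondence on the moduli of elliptic curves given by degree-$p$ isogenies, and then obtain $\tilde{T}_n$ for composite $n$ (those with $1/n\in\ZZ[P^{-1}]$) either by imposing the relations \eqref{hecke relation 0} and \eqref{hecke relation}, or equivalently by the analogous correspondence of order-$n$ subgroups. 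Since $\Ell_*$ classifies the formal group of the universal Weierstrass curve and its modular forms of weight $k$ are exactly $\Ell_{2k}$, the classical Hecke correspondence is the natural source of these operations.

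First I would record the point that forces us to invert $p$: over $\ZZ[\frac1p]$ a degree-$p$ isogeny $E\to E'$ is étale, hence induces an \emph{isomorphism} $\hat{E}\cong\hat{E'}$ of formal groups. The universal such isogeny therefore determines a map out of $MU_*MU$, compatible with both the source and target units $\Ell_*\to\Ell_*$, and hence a class in the co-operation Hopf algebroid $\Ell_*\Ell[\frac1p]$, which by Landweber exactness is $\Ell_*\otimes_{MU_*}MU_*MU\otimes_{MU_*}\Ell_*$. Summing over the $p+1$ order-$p$ subgroups of the universal curve produces a class that is invariant under the relevant automorphisms and so descends to level one; since stable operations on a Landweber-exact complex-oriented theory are determined by, and may be manufactured from, a compatible action on coefficients and co-operations, this yields the stable natural transformation $\tilde{T}_p$.

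Next I would verify the Hecke relations. Relation \eqref{hecke relation 0} and the recursion \eqref{hecke relation} are the standard identities among isogeny correspondences—composing a degree-$p^{r+1}$ correspondence with a degree-$p$ one splits into the degree-$p^{r+2}$ correspondence plus a contribution from the isogenies factoring through $[p]$—and these hold already at the level of coefficients and the Hopf algebroid, hence hold as operations. The one subtlety is identifying the correction term as $\tfrac1p\Psi^p\tilde{T}_{p^r}$, where $\Psi^p$ is the Adams operation from Remark \ref{adams ops remark}, built from the $p$-series $[p]_F$ of the formal group, which is precisely the formal-group avatar of multiplication-by-$p$ on the curve. Consistency with Observation \ref{brown rep observation} is a useful check: there $\Psi^p$ acts on $\Ell_{2k}(S^0)$ as $p^k$, so $\tfrac1p\Psi^p$ contributes the classical factor $p^{k-1}$, exactly matching the weight-$k$ relation \eqref{hecke rel 2}.

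Finally, to prove agreement with the classical operator on $S^0$, I would evaluate on $\Ell[P^{-1}]^*(S^0)=\Ell_*[P^{-1}]$ and compute through the $q$-expansion given by the Tate curve. The order-$p$ subgroups of the Tate curve and their quotients are completely explicit, so the sum-over-isogenies construction reduces on $q$-expansions to $\sum a_nq^n\mapsto\sum a_{pn}q^n+p^{k-1}\sum a_nq^{np}$, which is the classical $T_p$ on weight-$k$ weakly holomorphic forms. I expect the genuinely hard step to be the second one: promoting the isogeny correspondence from a construction on coefficient rings and moduli to an honest stable operation on the entire cohomology theory. This is exactly where complex orientability (hence the inversion of $6$) is indispensable, and where one must invoke the Landweber-exact description of stable operations and check that the $\Gamma_0(p)$-level construction descends correctly to level one.
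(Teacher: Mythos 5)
The paper does not prove this statement at all: it is quoted as Theorem 7 of Baker's paper \cite{MR1037690}, and the citation is the entire ``proof.'' So the useful comparison is between your sketch and Baker's actual construction, which the paper does recount later (in \cref{Extending a Hecke eigenform...}) when it needs to compute with the operators. Baker's construction is algebraic rather than geometric: $\tilde{T}_\ell$ is the composite
\[ \Ell_*(X) \cong \Ell_*\otimes_{MU_*}MU_*(X) \longrightarrow \Ell_*\otimes_{MU_*}MU_*MU\otimes_{MU_*}MU_*(X) \stackrel{\overline{H}\otimes 1}{\longrightarrow} \Ell_*\otimes_{MU_*}MU_*(X)\cong \Ell_*(X), \]
where the first map is induced by the $MU_*MU$-coaction and $\overline{H}$ is defined by rationalizing, writing an element of $\Ell_*\otimes_{MU_*}MU_*MU$ as $\sum_n f_n\otimes u_n$ with $u_n\in \eta_R(MU_*\otimes_{\ZZ}\QQ)$ and $f_n\in\Ell_*\otimes_{\ZZ}\QQ$, applying the classical $T_\ell$ to the factors $f_n$, and checking that the result is still $\ZZ[\frac{1}{\ell}]$-integral and independent of the chosen decomposition. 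Your isogeny-correspondence picture is the geometric reason this works --- an index-$\ell$ sublattice gives an isogeny which, after inverting $\ell$, is an isomorphism of formal groups and hence a point of the scheme corepresented by $\Ell_*\Ell[\frac{1}{\ell}]$ --- and the two formulations buy different things: Baker's makes stability and the verification on $S^0$ immediate (on coefficients $\overline{H}$ literally \emph{is} the classical $T_\ell$, so no Tate-curve computation is needed), while yours makes the Hecke relations and the identification of the correction term with $\frac{1}{p}\Psi^p$ transparent. Your outline is sound; the step you rightly identify as the hard one --- promoting the correspondence from coefficients and co-operations to an honest stable operation --- is precisely what Baker's explicit $\overline{H}$ accomplishes, and a complete write-up would have to supply the integrality and coaction-compatibility checks that your phrase ``descends correctly to level one'' leaves implicit.
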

Baker's proof works equally well to prove that
\begin{theorem}\label{baker thm on hecke action 2}
For every set $P$ of primes, $P$-inverted holomorphic elliptic cohomology $\elll[P^{-1}]^*$ has Hecke operators in $P$. The action of $\tilde{T}_p$ on $\elll[P^{-1}]^{-2k}(S^0) \cong \elll[P^{-1}]_{2k}(S^0)$ coincides with the action of the classical Hecke operator $T_p$ on the weight $k$ holomorphic modular forms.
\end{theorem}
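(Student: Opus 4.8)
The plan is to follow Baker's proof of Theorem~\ref{baker thm on hecke action} essentially verbatim, with $\elll[P^{-1}]$ in place of $\Ell[P^{-1}]$ throughout, checking that each ingredient survives the replacement. Baker's construction of the operations $\tilde{T}_n$ uses only two features of the cohomology theory: that it is complex orientable, and that it carries Adams operations $\Psi^p$ with the prescribed multiplicativity and effect on coefficients. Both hold for $\elll[P^{-1}]^*$: complex orientability after inverting $6$ is recorded in Remark~\ref{remark on why 6 is inverted 1}, and the Adams operations are produced in Remark~\ref{adams ops remark}. Running Baker's construction on these inputs yields stable, degree-preserving natural transformations $\tilde{T}_n$ on $\elll[P^{-1}]^*$, and the Hecke relations \eqref{hecke relation 0} and \eqref{hecke relation} are formal consequences of the defining formulas, exactly as in Baker. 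Since those identities are established at the level of the construction, they hold on $\elll[P^{-1}]^*(X)$ for every space $X$, not merely on $S^0$.

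To identify the operators on $S^0$, I would use the natural transformation $\iota\colon \elll[P^{-1}]^* \to \Ell[P^{-1}]^*$ induced by the ring map $\tmf[\frac{1}{6}]\to \TMF[\frac{1}{6}]$. This map of ring spectra carries the complex orientation and the Adams operations of $\elll[P^{-1}]$ to those of $\Ell[P^{-1}]$---the latter compatibility is already invoked in Remark~\ref{adams ops remark}---so the operations $\tilde{T}_p$ built from these data are intertwined by $\iota$. Evaluated on $S^0$, $\iota$ is the inclusion of holomorphic modular forms into weakly holomorphic modular forms, which is injective. Since Baker's Theorem~\ref{baker thm on hecke action} identifies $\tilde{T}_p$ on $\Ell[P^{-1}]^{-2k}(S^0)$ with the classical weight $k$ Hecke operator $T_p$ on weakly holomorphic forms, the commuting square determined by $\iota$ forces $\tilde{T}_p$ on $\elll[P^{-1}]^{-2k}(S^0)$ to agree with the restriction of $T_p$ to the holomorphic forms; passing to homology via Observation~\ref{brown rep observation} gives the stated action on $\elll[P^{-1}]_{2k}(S^0)$.

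The only point that genuinely involves the word ``holomorphic'' is that this restriction is well defined, i.e.\ that the operations do not leave the holomorphic subtheory. On $S^0$ this is the classical fact that $T_p$ preserves holomorphy: the $q$-expansion formula $T_p(\sum a_n q^n) = \sum a_{pn}q^n + p^{k-1}\sum a_n q^{np}$ introduces no negative powers of $q$, so $T_p$ maps $M_k$ into $M_k$. For a general space $X$ no such issue arises, because $\tilde{T}_p$ on $\elll[P^{-1}]^*(X)$ is defined intrinsically from the complex orientation and Adams operations of $\elll[P^{-1}]$ itself, and so lands in $\elll[P^{-1}]^*(X)$ by construction. I therefore expect the main obstacle to be mild and essentially bookkeeping: confirming that $\elll[P^{-1}]$ supplies exactly the orientation and Adams-operation data that Baker's argument consumes, and that $\iota$ is a map of complex-oriented theories commuting with the $\Psi^p$, so that the $S^0$-comparison with Baker's weakly holomorphic result is legitimate.
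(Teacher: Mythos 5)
Your proposal is correct and takes essentially the same route as the paper, whose entire proof of this theorem is the assertion that Baker's argument for Theorem \ref{baker thm on hecke action} ``works equally well'' for $\elll[P^{-1}]$. The two points you isolate --- that $\elll[P^{-1}]$ supplies the complex orientation and Adams operations that Baker's construction consumes, and that the identification on $S^0$ follows from the injective comparison to $\Ell[P^{-1}]$ together with the fact that $T_p$ preserves holomorphy --- are exactly the content the paper leaves implicit.
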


As a consequence of Theorem \ref{baker thm on hecke action 2} and Observation \ref{brown rep observation}, we have maps\footnote{To be careful: since these maps of spectra are constructed using Brown representability, they are only defined up to homotopy.} of spectra $\Psi^p: \elll[\frac{1}{p}] \rightarrow \elll[\frac{1}{p}]$ and $\tilde{T}_{p^r}: \elll[\frac{1}{p}] \rightarrow \elll[\frac{1}{p}]$ for each prime $p$ and each positive integer $r$, and consequently, given a set $P$ of primes, we have an Adams operator $\Psi^p:\elll[P^{-1}]_* \rightarrow \elll[P^{-1}]_*$ for each $p\in P$ and a Hecke operator $\tilde{T}_n: \elll[P^{-1}]_* \rightarrow \elll[P^{-1}]_*$ for each positive integer $n$ whose prime factors are all in $n$, satisfying relations \eqref{hecke relation 0} and \eqref{hecke relation}. The Adams operator $\Psi^p$ agrees with multiplication by $p^k$ on $\elll[P^{-1}]_{2k}(S^0)$, and so relation \eqref{hecke relation} reduces to the familiar classical relation \begin{align}\label{hecke relation 2} \tilde{T}_{p^{r+2}} &= \tilde{T}_p\tilde{T}_{p^{r+1}} - p^{k-1}\tilde{T}_{p^r}\end{align} on $\elll_{2k}[P^{-1}](S^0)$.

\begin{remark}\label{remark on why 6 is inverted 2}
Baker's Hecke operations are defined on weakly holomorphic elliptic homology $\Ell_*$ and also on holomorphic elliptic homology $\elll_*$, but $6$ is already inverted in the coefficient ring of both of these generalized homology theories (see Definition \ref{def of elll-cohomology}). The primes $2$ and $3$ are not already inverted in the generalized homology theory $TMF_*$ (which agrees with $\Ell_*$ after inverting $6$) and in the generalized homology theories $Tmf_*$ and $tmf_*$ (the latter of which agrees with $\elll_*$ after inverting $6$).

We see some indications that it may be possible to construct stable Hecke operators on $TMF_*$, $Tmf_*$, and $tmf_*$: the idea would be to mimic, in spectral algebraic geometry, Conrad's integral version (from section 4.5 of \cite{MR2311664}) of the well-known geometric construction of Hecke operators. 
Conrad constructed a moduli stack $\mathcal{M}_{\Gamma_0(p)}$ of generalized elliptic curves $E$ over $\mathbb{Z}$ equipped with a cyclic subgroup $G$ of order $p$ on $E^{sm}$ whose Cartier divisor is ample, and finite flat maps $\pi_1,\pi_2: \mathcal{M}_{\Gamma_0(p)} \rightarrow \mathcal{M}_{1,1}$ to the moduli stack of generalized elliptic curves, such that the induced trace map $\Tr: H^0\left( \mathcal{M}_{1,1}; \pi_{1*}\pi_1^*\omega_{\mathcal{M}_{1,1}}^{\otimes k}\right) \rightarrow H^0\left( \mathcal{M}_{1,1}; \omega_{\mathcal{M}_{1,1}}^{\otimes k}\right) \cong M_k$ has the property that its composite
\begin{align} M_k = H^0\left(\mathcal{M}_{1,1}; \omega_{\mathcal{M}_{1,1}}^{\otimes k}\right)
\label{map 1} &\stackrel{\xi^{\otimes k}\circ \pi_2^*}{\longrightarrow}  
  H^0\left(\mathcal{M}_{\Gamma_0(p)}; \omega_{\mathcal{M}_{\Gamma_0(p)}}^{\otimes k}\right) \\
\label{map 2} &\stackrel{\cong}{\longrightarrow}  
  H^0\left( \mathcal{M}_{1,1}; \pi_{1*}\pi_1^*\omega_{\mathcal{M}_{1,1}}^{\otimes k}\right) \\
\label{map 3} &\stackrel{\Tr}{\longrightarrow}
  M_k, \end{align}
where $\xi: \pi_2^*\omega \rightarrow \pi_1^*\omega$ is the pull-back along the universal $p$-isogeny,  agrees with $pT_p$ on the complex fiber. 
It seems quite plausible that Conrad's constructions can also be carried out with the well-known $E_{\infty}$-ring-spectral enrichment of $\mathcal{M}_{1,1}$ (e.g. as in \cite{MR2597740}), especially if one is willing to invert the prime $p$, yielding a topological Hecke operator $\tilde{T}_p: \Tmf[\frac{1}{p}] \rightarrow \Tmf[\frac{1}{p}]$. Our understanding from \cite{daviesinpreparation} is that Jack Davies has obtained positive results in this area.

However, the torsion elements in $\pi_*(TMF)$ and $\pi_*(Tmf)$ and $\pi_*(tmf)$ arise from the $2$-torsion and $3$-torsion in the \'{e}tale cohomology groups $H^n_{\et}(\mathcal{M}_{1,1}; \omega^{\otimes k})$ for $n>0$, arising from $\mathcal{M}_{1,1}$ being a Deligne-Mumford stack with ``stacky'' points with finite isotropy groups of orders divisible by $2$ and $3$. Now the stack $\mathcal{M}_{\Gamma_0(p)}$ is covered by $\mathcal{M}_{\Gamma_1(p)}$, the classifying stack of elliptic curves together with a point of order $p$. For $p>3$ this stack is actually a (smooth) scheme, and so we do not get any of these torsion groups in $H^n_{\et}(\mathcal{M}_{\Gamma_1(p)}; \omega^{\otimes k})$ for $n>0$. The stack $\mathcal{M}_{\Gamma_1(p)}$ is a Galois cover of $\mathcal{M}_{\Gamma_0(p)}$ with Galois group $Aut(\mathbb{F}_p) \cong \mathbb{F}_p^{\times}$, acting on $\mathcal{M}_{\Gamma_1(p)}$ by the ``diamond operators'' which permute the generators in an order $p$ subgroup of a given elliptic curve. In particular, all the torsion in the dimension $>1$ \'{e}tale cohomology groups of $\mathcal{M}_{\Gamma_0(p)}$ comes from the isotropy of the action of the diamond operators, i.e., from the group cohomology of $\mathbb{F}_p^{\times}$. 

Consequently, under the composite of the maps \eqref{map 1}, \eqref{map 2}, and \eqref{map 3}, above, the $2$ and $3$-torsion elements of $H^n_{\et}(\mathcal{M}_{1,1}; \omega^{\otimes k})$, which give rise to $2$- and $3$-torsion elements in $\pi_*(TMF)$ and $\pi_*(Tmf)$, factor through group cohomology groups of $\mathbb{F}_p^{\times}$ acting via diamond operators on $\mathcal{M}_{\Gamma_1(p)}$. So, even if one had topological Hecke operators $\tilde{T}_p$ defined on $TMF$ or $Tmf$ or $tmf$ (i.e., with $2$ and $3$ not inverted), the effect of these topological Hecke operators on $tmf_*$ algebraically comes from the group cohomology of a cyclic group. In addition, the torsion in $\Gamma_0(p)$ is actually trivial for a set of primes $p$ of positive density, and consequently the  action of $\tilde{T}_p$ on 2- and 3-torsion in $tmf_*$ will be trivial for a set of primes of positive density. In particular, from the formulas in \cite[Prop. 1.43]{Shimura} the action of $\tilde{T}_p$ on the $2$-torsion in $tmf_*$ is trivial if $p\equiv 3$ modulo $4$, and the action of $\tilde{T}_p$ on the $3$-torsion in $tmf_*$ is trivial if $p\equiv 2$ modulo $3$. 

For these reasons, the special behavior of the primes $2$ and $3$ in $TMF$ and $Tmf$ seems orthogonal to questions about topological Hecke operators and their eigenforms, so we have not hesitated to simply invert $6$, and work with elliptic homology rather than $tmf$.
\end{remark}

\section{Hecke eigenforms over topological spaces.}
\label{Hecke eigenforms over topological spaces.}

\subsection{Topological Hecke eigenforms}
\label{Topological Hecke eigenforms and...}

Let $X$ be a CW-complex or a bounded-below spectrum, and let $E$ be a spectrum. We have the homological and cohomological Atiyah-Hirzebruch spectral sequences
\begin{align}
\label{homological ahss}   E^2_{s,t} \cong \tilde{H}_s(X; E_t(S^0)) &\Rightarrow E_{s+t}(X)\mbox{\ \ and}\\ 
\nonumber             d^r: E^r_{s,t}                   &\rightarrow E^r_{s-r,t+r-1} \\
\label{cohomological ahss} E_2^{s,t} \cong \tilde{H}^s(X; E^t(S^0)) &\Rightarrow E^{s+t}(X) \\
\nonumber             d_r: E_r^{s,t}                   &\rightarrow E_r^{s+r,t-r+1}.
\end{align}
Spectral sequence \eqref{cohomological ahss} is conditionally convergent, with strong convergence if $X$ is finite-dimensional\footnote{To be absolutely clear about the terminology: when a CW-complex is called ``finite-dimensional,'' this means it is required to have cells in only finitely many dimensions, although it is allowed to have infinitely many cells in individual dimensions. When a CW-complex is ``finite,'' this means it has only finitely many cells {\em tout court}.}.
Meanwhile, spectral sequence \eqref{homological ahss} is always strongly convergent. See section 12 of \cite{MR1718076} for these convergence claims.

It is a theorem of A. Dold (see 14.18 of \cite{MR0198464} or Corollary 2.6 of \cite{MR0254838}, or for a reference more easily found on the Web, \cite{MR1193149}) that the shortest differential in any spectral sequence of the type \eqref{homological ahss} or \eqref{cohomological ahss} is torsion-valued. 
Consequently, if $\tilde{H}^*(X; \mathbb{Z})$ and $E^*(S^0)$ are each torsion-free, then spectral sequence \eqref{cohomological ahss} collapses with no nonzero differentials. 
This leads, without much fuss, to the following result, which is known to experts but which we have not been able to locate in the literature:
\begin{prop}\label{ahss splitting}
Let $X$ be a finite-dimensional CW-complex and let $E$ be a ring spectrum. Suppose that $P$ is a set of prime numbers such that:
\begin{itemize}
\item $\pi_*(E)$ is torsion-free as an abelian group,
\item $\frac{1}{p}\in \pi_0(E)$ for all $p\in P$, and
\item $\tilde{H}_*(X;\mathbb{Z}[P^{-1}])$ is a free graded $\mathbb{Z}[P^{-1}]$-module.
\end{itemize}
Equip the bigraded $\pi_*(E)$-module $\tilde{H}_*(X; \mathbb{Z})\otimes_{\mathbb{Z}} \pi_*(E)$ with the (single) grading by total degree, i.e., $\left(\tilde{H}_*(X; \mathbb{Z})\otimes_{\mathbb{Z}} \pi_*(E)\right)_n = \bigoplus_{s+t=n}\tilde{H}_s(X; \mathbb{Z})\otimes_{\mathbb{Z}} \pi_t(E)$. Then we have an isomorphism of graded $E_*$-modules
\[ E_*(X) \cong \tilde{H}_*(X; \mathbb{Z})\otimes_{\mathbb{Z}} \pi_*(E).\]

Similarly, 
we have an isomorphism of graded $E^*(S^0)$-modules
\[ E^*(X) \cong \hom_{\mathbb{Z}}\left(\tilde{H}_*(X; \mathbb{Z}),E^*(S^0)\right),\]
i.e., $E^n(X) \cong \bigoplus_s \hom_{\mathbb{Z}}\left( \tilde{H}_s(X;\mathbb{Z}),E^{n-s}(S^0)\right).$
\end{prop}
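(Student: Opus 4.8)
The plan is to obtain the first (homological) isomorphism by running the Atiyah--Hirzebruch spectral sequence \eqref{homological ahss} and checking that it collapses, and then to deduce the second (cohomological) isomorphism from the first by a universal coefficient argument, rather than by fighting the extension problem in \eqref{cohomological ahss} directly. First I would identify the $E^2$-page of \eqref{homological ahss} as $E^2_{s,t}\cong \tilde H_s(X;\pi_t(E))$ and show every term is a torsion-free abelian group. Since $\pi_*(E)$ is torsion-free it is flat over the PID $\mathbb Z$, so the $\mathrm{Tor}$ summand in the universal coefficient theorem for ordinary homology vanishes and $E^2_{s,t}\cong \tilde H_s(X;\mathbb Z)\otimes_{\mathbb Z}\pi_t(E)$. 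Because $\frac1p\in\pi_0(E)$ for every $p\in P$, each $\pi_t(E)$ is a $\mathbb Z[P^{-1}]$-module, so this group rewrites as $\tilde H_s(X;\mathbb Z[P^{-1}])\otimes_{\mathbb Z[P^{-1}]}\pi_t(E)$; the hypothesis that $\tilde H_*(X;\mathbb Z[P^{-1}])$ is free then exhibits it as a direct sum of copies of the torsion-free group $\pi_t(E)$, hence torsion-free.

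With every $E^2$-term torsion-free I would invoke the Dold theorem quoted above to force collapse: if some differential were nonzero, take the smallest $r$ with $d^r\neq 0$; then all earlier pages coincide with $E^2$ and are torsion-free, while Dold makes $d^r$ torsion-valued, so its target (a subquotient equal to $E^2$) being torsion-free forces $d^r=0$, a contradiction. Hence \eqref{homological ahss} collapses at $E^2$. Since $X$ is finite-dimensional the resulting filtration of $E_*(X)$ is finite, and its associated graded is the \emph{free} graded $E_*$-module $\tilde H_*(X;\mathbb Z)\otimes_{\mathbb Z}\pi_*(E)$. A finite filtration of $E_*$-modules whose successive quotients are projective splits---lift a basis of each quotient through the quotient maps using projectivity---so $E_*(X)\cong \tilde H_*(X;\mathbb Z)\otimes_{\mathbb Z}\pi_*(E)$ as graded $E_*$-modules, which is the first isomorphism.

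For the cohomological statement I would feed the freeness of $E_*(X)$ over $E_*$, just established, into the universal coefficient spectral sequence $\mathrm{Ext}^{s}_{E_*}(E_*(X),E_*)\Rightarrow E^*(X)$ for the ring spectrum $E$. Projectivity of $E_*(X)$ kills the higher $\mathrm{Ext}$-groups, collapsing it to a natural isomorphism $E^*(X)\cong \hom_{E_*}(E_*(X),E_*)$, with finite-dimensionality of $X$ guaranteeing strong convergence and no $\lim^1$ term. Tensor--hom adjunction then gives $\hom_{E_*}\big(\tilde H_*(X;\mathbb Z)\otimes_{\mathbb Z}E_*,E_*\big)\cong \hom_{\mathbb Z}\big(\tilde H_*(X;\mathbb Z),E_*\big)=\hom_{\mathbb Z}\big(\tilde H_*(X;\mathbb Z),E^*(S^0)\big)$, and tracking gradings yields $E^n(X)\cong\bigoplus_s \hom_{\mathbb Z}(\tilde H_s(X;\mathbb Z),E^{n-s}(S^0))$, the sum being finite because $X$ is finite-dimensional.

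I expect the main obstacle to be resolving the two extension problems. On the homology side it is clean, since the associated graded is a free (hence projective) $E_*$-module and the filtration is finite; the crucial decision is to route the cohomology side through the universal coefficient theorem rather than through \eqref{cohomological ahss}, precisely because the $E_\infty$-pieces of the cohomological spectral sequence are infinite products of copies of $E^*(S^0)$ and are not visibly projective, so the analogous splitting argument is unavailable there. A secondary point requiring care is the convergence of the universal coefficient spectral sequence when $X$ has infinitely many cells in a single dimension, where finite-dimensionality is exactly what rescues strong convergence.
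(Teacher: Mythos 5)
Your homological argument is essentially the paper's: identify the $E^2$-page of \eqref{homological ahss} via the ordinary universal coefficient theorem, use Dold's theorem to force collapse, and resolve the finite extension problem using freeness of the associated graded over $\pi_*(E)$. The paper phrases the splitting as an explicit induction starting from the top skeletal filtration rather than as ``a finite filtration with projective quotients splits,'' but the content is the same, and your observation that $\tilde{H}_s(X;\mathbb{Z})\otimes_{\mathbb{Z}}\pi_t(E)\cong \tilde{H}_s(X;\mathbb{Z}[P^{-1}])\otimes_{\mathbb{Z}[P^{-1}]}\pi_t(E)$ is free over $\pi_t(E)$ is exactly the point the paper also uses.

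For the cohomological half you genuinely diverge, and there is a gap. The paper runs the cohomological Atiyah--Hirzebruch spectral sequence \eqref{cohomological ahss} (finite-dimensionality of $X$ supplying strong convergence), resolves the extensions there, and then identifies $H^*(X;E^*(S^0))$ with $\hom_{\mathbb{Z}}\left(\tilde{H}_*(X;\mathbb{Z}),E^*(S^0)\right)$ by the \emph{ordinary} universal coefficient theorem over $\mathbb{Z}[P^{-1}]$, using that $\Ext^1_{\mathbb{Z}[P^{-1}]}$ out of a free module vanishes. You instead invoke the generalized universal coefficient spectral sequence $\Ext^s_{E_*}(E_*(X),E_*)\Rightarrow E^*(X)$. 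That spectral sequence does not exist for an arbitrary ring spectrum $E$, which is all the proposition hypothesizes: its known constructions require either Adams-type conditions on $E$ (a presentation as a colimit of finite spectra with projective $E$-cohomology satisfying a UCT) or a structured multiplication on $E$ permitting $E$-module-spectrum resolutions of $X\wedge E$. These conditions do hold for the theories to which the paper actually applies the proposition ($\elll$, $MU_{(p)}$, $BP$), so your route is salvageable in context, but as written it proves less than what is claimed. Your worry that the $E_\infty$-pieces of \eqref{cohomological ahss} are products and not visibly projective is a fair criticism of the direct route; the repair is either to add hypotheses on $E$ legitimizing your UCT spectral sequence, or to prove $E^*(X)\cong\hom_{E_*}(E_*(X),E_*)$ by a cell-by-cell Kronecker-duality induction using the freeness of $E_*$ of the skeleta, or simply to follow the paper and apply the ordinary UCT to the $E_2$-page of \eqref{cohomological ahss}.
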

\begin{proof}
Dold's theorem, described above, implies that spectral sequence \eqref{homological ahss} collapses with no nonzero differentials.
There still remains the extension problem: the $E^{\infty}$-page of spectral sequence \eqref{homological ahss} is isomorphic, as a bigraded $\pi_*(E)$-module, to $\tilde{H}_*(X;\mathbb{Z})\otimes_{\mathbb{Z}}\pi_*(E)$, but this $E^{\infty}$-page is only the associated graded of the filtration on $E_*(X)$ induced by the skeletal filtration of $X$. 

However, the extension problem is not difficult to solve. Since $X$ is finite-dimensional, there exists some $n$ such that the $n$-skeleton of $X$, $X^n$, agrees with $X$. So the $E^{\infty}$-page of spectral sequence \eqref{homological ahss} is concentrated to the left of the $s=n+1$-line (``left'' here refers to the convention of drawing spectral sequences \eqref{homological ahss} and \eqref{cohomological ahss} with the Serre convention, so that $s$ is the horizontal axis and $t$ is the vertical axis), i.e., all of $E_*(X)$ is concentrated in Atiyah-Hirzebruch filtration $n$, the image of the homomorphism $E_*(X^n) \rightarrow E_*(X)$. So the projection map from $E_*(X^n)$ to the $s=n$ line in the $E^{\infty}$-term of spectral sequence \eqref{homological ahss} is a projection from $E_*(X)$ to that line. Since $\pi_*(E)$ is torsion-free and every prime in $P$ is inverted in $\pi_*(E)$, we have that $\pi_*(E)$ is flat as a $\mathbb{Z}[P^{-1}]$-module, so by the universal coefficient sequence for homology, we have that $\tilde{H}_*(X; \pi_*(E)) \cong \tilde{H}_*(X; \mathbb{Z}[P^{-1}]) \otimes_{\mathbb{Z}[P^{-1}]} \pi_*(E)$, so freeness of $\tilde{H}_n(X; \mathbb{Z}[P^{-1}])$  as an $\mathbb{Z}[P^{-1}]$-module implies freeness of $\tilde{H}_n(X; \pi_*(E))$ as an $\pi_*(E)$-module. Collapse of the spectral sequence implies that the $s=n$ line in the $E^{\infty}$-term is isomorphic to the free $E_*$-module $\tilde{H}_n(X; \pi_*(E))$, so the projection of $\tilde{H}_*(X; \mathbb{Z}[P^{-1}])$ to the $s=n$-line in the $E^{\infty}$-term splits. 

That handles the first (counting from the right-hand side in the $E^{\infty}$-term of spectral sequence \eqref{homological ahss}) extension problem: the extension is trivial. The rest are handled by an easy induction, by essentially the same argument as in the previous paragraph: if we already have shown that the first $j$ extension problems are trivial, then the canonical inclusion of $\pi_*(E)$-modules $\im \left(E_*(X^{n-j})\rightarrow E_*(X)\right) \hookrightarrow E_*(X)$ is split.
The inclusion 
\begin{equation}\label{incl 230}\im \left(E_*(X^{n-j-1})\rightarrow E_*(X)\right) \hookrightarrow \im \left(E_*(X^{n-j})\rightarrow E_*(X)\right)\end{equation} has cokernel the $s=n-j$-line in the $E^{\infty}$-page of spectral sequence \eqref{homological ahss}, which is a free $\pi_*(E)$-module by the same argument as in the previous paragraph. So the inclusion of $\pi_*(E)$-modules \eqref{incl 230} splits, finishing the inductive step.

The cohomological claim is similar: in that case, we do not need the finite-dimensionality of the CW-complex $X$ for the extension problem (since we begin solving extension problems on the left-hand vertical line in $E_{\infty}$-term of spectral sequence \eqref{cohomological ahss}, rather than on the right-hand vertical line as in spectral sequence \eqref{homological ahss}. Instead, in the cohomological case, the finite-dimensionality of $X$ is simply used to get strong convergence of spectral sequence \eqref{cohomological ahss}. Collapse of the spectral sequence and triviality of extension problems in its $E_{\infty}$-term is otherwise the same as in the homological case. The isomorphism between $\hom_{\mathbb{Z}[P^{-1}]}\left(\tilde{H}_*(X; \mathbb{Z}[P^{-1}]),E^*(S^0)\right)$ and $H^*(X; E^*(S^0))$ is, of course, just the universal coefficient sequence for cohomology, since $\Ext^1_{\mathbb{Z}[P^{-1}]}\left(\tilde{H}_*(X; \mathbb{Z}[P^{-1}]),E^*(S^0)\right)$ vanishes.
\end{proof}

\begin{observation}\label{hecke action observation}
Let $P$ be a set of primes containing $2$ and $3$, and let $X$ be a finite-dimensional CW-complex such that $\tilde{H}_*(X; \mathbb{Z}[P^{-1}])$ is a free graded $\mathbb{Z}[P^{-1}]$-module. 
By Proposition \ref{ahss splitting}, the holomorphic elliptic homology $\elll[P^{-1}]_*(X)$ of $X$ is isomorphic to $\tilde{H}_*(X; \mathbb{Z})\otimes_{\mathbb{Z}} \elll[P^{-1}]_*(S^0)$ as a graded $\elll[P^{-1}]_*(S^0)$-module, and by naturality (in the choice of space/spectrum) of the Hecke action, the action of Hecke operators on the elliptic homology of the $n$-skeleton $X^n$ of $X$ lands in the elliptic homology of $X^n$.
Furthermore, from the proof of Proposition \ref{ahss splitting}, we obtained the isomorphism $\elll[P^{-1}]_k(X) \cong \oplus_{i+j=k}\tilde{H}_i(X; \mathbb{Z})\otimes_{\mathbb{Z}} \elll[P^{-1}]_j(S^0)$ by identifying $\tilde{H}_i(X; \mathbb{Z})\otimes_{\mathbb{Z}}\elll[P^{-1}]_j(S^0)$ with $\elll[P^{-1}]_{i+j}$ of a wedge of $i$-spheres, i.e., $\elll[P^{-1}]_{j}$ of a wedge of $0$-spheres, i.e., a direct sum of copies of $\elll[P^{-1}]_{j}(S^0)$, on which the Hecke diagonal action is a block sum of copies of the classical Hecke action on weight $j$ modular forms. Consequently the action of each Hecke operator $T\in \mathbf{A}_P$ on \[ \elll[P^{-1}]_k(X) \cong \oplus_{i+j=k}\tilde{H}_i(X; \mathbb{Z})\otimes_{\mathbb{Z}} \elll[P^{-1}]_j(S^0)\] is {\em upper triangular}: $\tilde{T}$ sends an element of $\tilde{H}_{k-i}(X; \mathbb{Z})\otimes_{\mathbb{Z}} \elll[P^{-1}]_i(S^0) \subseteq \elll[P^{-1}]_k(X)$ to the block sum of $\dim_{\mathbb{Z}[P^{-1}]}\tilde{H}_{k-i}(X;\mathbb{Z}[P^{-1}])$ copies of the classical action of the Hecke operator $T$ on  weight $i$ modular forms, {\em plus terms in $\elll[P^{-1}]_k(X)$ of lower skeletal/Atiyah-Hirzebruch filtration,} i.e., terms in $\tilde{H}_{k-j}(X; \mathbb{Z})\otimes_{\mathbb{Z}} \elll[P^{-1}]_j$ with $j>i$. The conclusion here is that, after extending scalars to $\Qbar$, the eigenspaces of the action of the abstract Hecke algebra $\mathbf{A}_P$ on $\elll[P^{-1}]_*(X)$ are contained in the eigenspaces of the diagonal classical action of $\mathbf{A}_P$ on $\tilde{H}_*(X; \mathbb{Z})$ tensored with the ring $\elll[P^{-1}]_*(S^0)\otimes \Qbar$ of modular forms over $\Qbar$. 
\end{observation}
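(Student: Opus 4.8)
The plan is to derive everything from two inputs: the splitting isomorphism of Proposition \ref{ahss splitting}, and the fact that Baker's Hecke operators are \emph{stable} natural transformations, so that (Observation \ref{brown rep observation}) they are natural with respect to arbitrary maps of spectra and commute with the suspension isomorphism. First I would fix the increasing skeletal filtration $F_s\elll[P^{-1}]_*(X) = \mathrm{im}\left(\elll[P^{-1}]_*(X^s)\to \elll[P^{-1}]_*(X)\right)$. By the collapse of the homological Atiyah--Hirzebruch spectral sequence \eqref{homological ahss} established inside the proof of Proposition \ref{ahss splitting}, its associated graded is the page $E^{\infty}_{s,t}=E^2_{s,t}=\tilde H_s(X;\mathbb Z)\otimes_{\mathbb Z}\elll[P^{-1}]_t(S^0)$. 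Applying the naturality of each $\tilde T\in\mathbf A_P$ to the skeletal inclusion $X^s\hookrightarrow X$ yields a commuting square, hence $\tilde T(F_s)\subseteq F_s$; this is precisely the assertion that the Hecke action on the homology of the $n$-skeleton lands in the homology of the $n$-skeleton, and it is the source of the triangularity.

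Next I would identify the map induced by $\tilde T$ on each graded quotient $\mathrm{gr}_s=F_s/F_{s-1}$. Applying naturality to the inclusion $X^{s-1}\hookrightarrow X^s$ and to the collapse map $X^s\to X^s/X^{s-1}$ identifies this induced map with the Hecke action on the reduced elliptic homology of $X^s/X^{s-1}=\bigvee S^s$. On $\elll[P^{-1}]_*\!\left(\bigvee S^s\right)$ the action is $\mathrm{id}\otimes\tilde T$, and stability lets $\tilde T$ commute with the suspension isomorphism $\elll[P^{-1}]_{s+t}(S^s)\cong\elll[P^{-1}]_t(S^0)$, so on each sphere it is the Hecke action on $\elll[P^{-1}]_t(S^0)$, which by Theorem \ref{baker thm on hecke action 2} is the classical action of $T$ on weight-$(t/2)$ modular forms. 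Passing to the homology subquotient, the diagonal block of $\tilde T$ on $\mathrm{gr}_s$ is $\mathrm{id}_{\tilde H_s(X;\mathbb Z)}\otimes T$, i.e.\ the block sum of $\dim_{\mathbb Z[P^{-1}]}\tilde H_s(X;\mathbb Z[P^{-1}])$ copies of the classical Hecke operator, while the off-diagonal terms strictly drop skeletal filtration (equivalently, strictly raise the modular-forms grading $t$). I expect this identification of the graded action to be the main obstacle, since it is where one must combine naturality across the cofiber sequences, the wedge-of-spheres description of the subquotients, and suspension-invariance, all while tracking the grading convention $\mathrm{grading}=2\times\mathrm{weight}$. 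I would also emphasize that while the off-diagonal entries depend on the (non-canonical) splitting of Proposition \ref{ahss splitting}, the diagonal graded action is canonical, which is what makes the eigenspace conclusion well-posed.

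Finally, for the eigenspace statement I would base-change along the flat map $\mathbb Z[P^{-1}]\to\Qbar$, which commutes with homology, with the filtration, and with the induced graded action. Given an eigenvector $v\in\elll[P^{-1}]_*(X)\otimes\Qbar$ for $\mathbf A_P$ with eigencharacter $\lambda$, let $s$ be the largest index with nonzero image $\bar v\in\mathrm{gr}_s\otimes\Qbar$, so $v\in F_s$. Since $\tilde T$ preserves $F_{s-1}$, projecting the eigenvector equation $\tilde T v=\lambda(\tilde T)\,v$ to $\mathrm{gr}_s\otimes\Qbar$ gives $\overline{\tilde T}\,\bar v=\lambda(\tilde T)\,\bar v$, where $\overline{\tilde T}$ is the diagonal block $\mathrm{id}_{\tilde H_s(X;\mathbb Z)}\otimes T$ computed above. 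Hence $\bar v$ is an eigenvector, with the same eigencharacter $\lambda$, for the block-diagonal classical Hecke action on $\tilde H_*(X;\mathbb Z)\otimes\elll[P^{-1}]_*(S^0)\otimes\Qbar$. This exhibits the containment of eigenspaces claimed, and as a by-product shows that every eigencharacter occurring over $X$ already occurs for the classical diagonal action, which is the linear-algebra shadow of the triangular form established in the previous step.
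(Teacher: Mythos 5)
Your proposal is correct and follows essentially the same route as the paper's own justification: the splitting from Proposition \ref{ahss splitting}, naturality of the stable Hecke operators with respect to skeletal inclusions to get preservation of the Atiyah--Hirzebruch filtration, identification of the induced action on the associated graded with the block-diagonal classical Hecke action on $\elll[P^{-1}]_*$ of the wedges of spheres $X^s/X^{s-1}$, and the standard linear-algebra step of projecting an eigenvector to its top nonzero filtration quotient. You spell out some steps the paper leaves implicit (the cofiber-sequence identification of the graded quotients and the explicit eigenspace projection), but there is no substantive difference in method.
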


This leads us to Definition \ref{def of eigenform}.
Let $P\subseteq \Pi$ be a set of primes containing $2$ and $3$, and let $X$ be a finite-dimensional CW-complex such that $\tilde{H}_*(X; \mathbb{Z}[P^{-1}])$ is a free graded $\mathbb{Z}[P^{-1}]$-module. We continue to write $M_*$ for the graded ring of level 1 holomorphic modular forms with coefficients in $\mathbb{Z}[P^{-1}]$. 
Let $k \in \frac{1}{2}\ZZ $ be a half-integer and let $\mathbf{A}_{P}$ be the abstract Hecke algebra for $P$ acting on $X$. 

\begin{definition}
\label{def of eigenform} 
By a {\em topological Hecke eigenform over $X$ of weight $k \in \frac{1}{2}\ZZ $ 
for the primes in $P$} we mean 
an element $f\in \elll_{2k}(X)\otimes_{\mathbb{Z}}\mathcal{O}_{\Qbar}[P^{-1}]$ which is a common  eigenvector for the action of $\mathbf{A}_{P}$. That is, 
$$
\tilde{T}_nf = \lambda(\tilde{T}_n)f
$$
for all $n$ whose prime factors are each in $P$, and for some eigencharacter $\lambda: \mathbf{A}_{P} \rightarrow \mathcal{O}_{\Qbar}[P^{-1}]
$ with values in the ring $\mathcal{O}_{\Qbar}$ of algebraic integers. 
\end{definition}

Under the isomorphism
\begin{equation}
\label{splitting iso 34094}
\coprod_{i} \tilde{H}_{2k-i}(X; \mathbb{Z})\otimes_{\mathbb{Z}} M_i \otimes_{\mathbb{Z}} \mathcal{O}_{\Qbar}\cong \elll_{2k}(X)\otimes_{\mathbb{Z}} \mathcal{O}_{\Qbar}[P^{-1}],\end{equation}
every topological Hecke eigenform over $X$ corresponds to a vector
of classical holomorphic modular forms, of various weights depending on the homology groups of $X$. Of course, not every vector of classical modular forms of those weights will correspond to a topological Hecke eigenform over $X$. 
One of our tasks in this section and the next is to identify the conditions on such a vector of classical modular forms which cause it to be a topological Hecke eigenform over $X$. These conditions depend on the choice of topological space $X$. We begin with the simplest case, in the following example.
 
\begin{example}\label{example examples of hecke eigenforms}
The trivial cases are spheres: the weight $k$ Hecke eigenforms over $S^0$ for $P = \Pi$ (or $P$ cofinite) are exactly the classical weight $k$ Hecke eigenforms, since 
$$
\elll_{2k}(S^0)\otimes_{\mathbb{Z}} \mathcal{O}_{\Qbar}[P^{-1}] \simeq M_k \otimes_{\mathbb{Z}} \mathcal{O}_{\Qbar}
$$
and the action of the topological Hecke operators $\tilde{T}_n\in \mathbf{A}_P$ on $\elll_{2k}(S^0)\otimes_{\mathbb{Z}} \mathcal{O}_{\Qbar}[P^{-1}]$
coincides with that of the classical Hecke operators on $M_k$. Note that the degree in elliptic homology is twice the weight. This is the reason why we need to allow $k\in \frac{1}{2}\ZZ$ in the definition of a topological eigenform.

Only slightly more generally, the weight $k \in \frac{1}{2}\ZZ$ topological Hecke eigenforms over $S^i$ for all primes are exactly the classical weight $k-\frac{i}{2}$ Hecke eigenforms  (See Remark \ref{remark on various topics} for some comments on the appearance of a half-integer weight here.) This follows simply from the fact that Baker's topological Hecke operators are {\em stable}. See Remark \ref{remark on why 6 is inverted 1} for some discussion of stability of the topological Hecke operators.
\end{example}

\begin{remark}\label{remark on various topics}
Here are some remarks about Definition \ref{def of eigenform} which we expect may be useful to some readers:
\begin{enumerate}
\item
In Definition \ref{def of eigenform}, the only reason for the base change to $\mathcal{O}_{\overline{\mathbb{Q}}}$ is to ensure that all eigenvalues of the action of the Hecke operators on $\elll_{2k}(X)$ are present in the ground ring. If $X$ is assumed to be a finite-type CW complex, then $\elll_{2k}(X)$ is finite-dimensional for each $k$, so it is not necessary to base change all the way to $\mathcal{O}_{\overline{\mathbb{Q}}}$: all topological Hecke eigenforms over $X$ of weight $k$ will already be present after base change to $\mathcal{O}_K$ for some finite extension $K$ of $\mathbb{Q}$. So the presence of $\mathcal{O}_{\overline{\mathbb{Q}}}$ in Definition \ref{def of eigenform}, rather than merely the ring of integers in some sufficiently large finite extension of $\mathbb{Q}$, is not essential.
\item
When $X$ has nontrivial homology in odd degrees, Definition \ref{def of eigenform} allows for the possibility of half-integer-weight topological Hecke eigenforms. For example, in the case $X = S^1$, every classical Hecke eigenform of weight $k$ also describes a topological Hecke eigenform of weight $k+\frac{1}{2}$ over $S^1$. Such half-integer weight topological Hecke eigenforms do not directly relate to holomorphic functions on the upper half-plane which satisfy a modularity law of half-integer weight, nor should be mistaken for the ``classical half-integer weight modular forms'' (\cite{MR332663}). A suitable generalized homology theory of ``half-integer weight topological modular forms,'' which when evaluated on spheres includes modular forms of half-integer weight as studied in \cite{MR332663}, is perhaps constructible after inverting $2$ (and perhaps, with much more difficulty, also without inverting $2$) using methods of spectral algebraic geometry applied to the metaplectic stacks and theta multiplier line bundles considered in \cite{candelorithesis}, but this has not yet been written down anywhere or studied systematically, and is orthogonal to the ideas presented in the present paper.
\end{enumerate}
\end{remark}

\subsection{Computation of Hecke eigenforms over wedges of spheres.}
\label{Computation of Hecke...}

Since the cohomology of any finite CW-complex $X$ with torsion-free homology coincides with the cohomology of a wedge of spheres, one sphere for each element in a $\mathbb{Z}$-linear basis for $\tilde{H}^*(X; \mathbb{Z})$, we see a relationship between the homotopical properties of $X$ and the supply of topological Hecke eigenforms over $X$. That is, suppose we fix a connective graded free $\mathbb{Z}[P^{-1}]$-algebra $B$, and we study all the finite CW-complexes $X$ such that $\tilde{H}^*(X; \mathbb{Z}[P^{-1}]) \cong B$. The homotopically simplest such CW-complex $X$ is simply a wedge of spheres: it is ``homotopically simple'' in the sense that all the attaching maps in the CW-complex are nulhomotopic. If $X$ is not a wedge of spheres, then some of the attaching maps in the CW-decomposition of $X$ must not be nulhomotopic, and consequently the Hecke action on the cofibers of those attaching maps may have nonzero off-diagonal terms. Consequently, not every vector (indexed by the homology of $X$) of classical Hecke eigenforms will be a topological Hecke eigenform on $X$, since an upper-triangular matrix can have smaller eigenspaces than a diagonal matrix. As a slogan, ``the more non-nulhomotopic attaching maps there are in a CW-decomposition for $X$, the fewer Hecke eigenforms we expect $X$ to have.''


For a finite wedge of spheres, however, we have a complete and simple understanding of the topological Hecke eigenforms. To state the result, given a topological eigencharacter $\lambda$ on a space $X$, denote by $V_X(\lambda)$ the corresponding eigenspace, and similarly denote by $V(\nu(\lambda))$  the corresponding eigenspace of classical eigenforms corresponding to its natural character.

\begin{prop}
\label{prop:wedges}
\label{examples of hecke eigenforms}
Let $P$ be a cofinite set of primes, and suppose that $X$ is a finite wedge of spheres\footnote{This same result remains true under the weaker assumption that $X$ splits as a finite wedge of spheres after inverting the primes in $P$.}. Let $\lambda: \mathbf{A}_{P}\rightarrow \overline{\mathbb{Q}}$ be an eigencharacter.
Write $V(\lambda)_*$ for the graded $\mathbb{Z}[P^{-1}]$-submodule of $M_*\otimes_{\mathbb{Z}} \mathbb{Z}[P^{-1}]$ consisting of eigenforms\footnote{We remind the reader that here, and everywhere else throughout this paper, all modular forms are assumed to be of level $1$. Analogous results at higher level are provable by similar methods.} for the action of $T_p$, for all $p\in P$, with eigencharacter $\lambda$. 
Write $V_X(\lambda)_*$ for the graded $\mathbb{Z}[P^{-1}]$-submodule of $\elll_*(X)\otimes_{\mathbb{Z}} \mathbb{Z}[P^{-1}]$ consisting of eigenforms for the action of $\tilde{T}_p$, for all $p\in P$, with eigencharacter $\lambda$.
Then we have isomorphisms of graded $\mathbb{Z}[P^{-1}]$-modules
\begin{align}
\label{iso 301} HH^0\left(\mathbf{A}_P; \left(\elll_*(X)\otimes_{\mathbb{Z}} \mathbb{Z}[P^{-1}]\right)^{\lambda} \right)
  &\cong V_X(\lambda)_* \\
\label{iso 302}  &\cong D(V(\lambda)_{*})\otimes_{\mathbb{Z}} \tilde{H}_{*}(X;\mathbb{Z})\otimes_{\mathbb{Z}} \mathbb{Z}[P^{-1}] \\
\label{iso 303}  &\cong HH^0\left(\mathbf{A}_P; \left(M_*\otimes_{\mathbb{Z}} \mathbb{Z}[P^{-1}]\right)^{\lambda} \right),
\end{align}
where $D(V(\lambda)_*)$ is the graded $\mathbb{Z}[P^{-1}]$-module $V(\lambda)_*$ with all grading degrees multiplied by $2$.
\end{prop}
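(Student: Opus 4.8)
The plan is to treat the three displayed isomorphisms separately, since they are of genuinely different natures: the outer two are formal unwindings of definitions, while all the real content lives in \eqref{iso 302}. For \eqref{iso 301}, I would recall from Definition \ref{bimodules definition 1} that $HH^0(\mathbf{A}_P; N^{\lambda})$ is by construction the module of elements $n$ satisfying $Tn = \lambda(T)n$ for every $T \in \mathbf{A}_P$; since $\mathbf{A}_P$ is a polynomial algebra on the generators $T_p$, this is exactly the module of simultaneous $\lambda$-eigenvectors for the $T_p$, $p \in P$. Taking $N = \elll[P^{-1}]_*(X)$ and comparing with Definition \ref{def of eigenform}, this module is precisely $V_X(\lambda)_*$, giving \eqref{iso 301}. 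The only thing to pin down here is a consistent choice of coefficient ring (extending to the ring of integers containing the eigenvalues and inverting $P$), so that the bimodule $N^{\lambda}$ and the submodule $V_X(\lambda)_*$ are taken over the same ground ring; this is routine.

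The heart of the argument is \eqref{iso 302}, where I would exploit two facts about a finite wedge of spheres $X \simeq \bigvee_{\alpha} S^{n_{\alpha}}$: that a generalized homology theory is additive, so $\elll[P^{-1}]_*(X) \cong \bigoplus_{\alpha} \elll[P^{-1}]_*(S^{n_{\alpha}})$ (consistent with Proposition \ref{ahss splitting}), and that Baker's operators $\tilde{T}_p$ are \emph{stable}, hence natural. For a wedge, each summand is carved out by the inclusion and collapse maps $S^{n_{\alpha}} \to X \to S^{n_{\alpha}}$; naturality of $\tilde{T}_p$ with respect to these maps forces $\tilde{T}_p$ to preserve each summand, so the decomposition is a splitting of $\mathbf{A}_P$-modules with \emph{no off-diagonal terms}. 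This is the decisive structural input, and the one place where a wedge is strictly better behaved than the general situation of Observation \ref{hecke action observation}: all attaching maps being nullhomotopic, the block-upper-triangular action there becomes block-diagonal here. Stability then gives suspension isomorphisms $\elll[P^{-1}]_{2k}(S^{n_{\alpha}}) \cong \elll[P^{-1}]_{2k-n_{\alpha}}(S^0) \cong M_{k-n_{\alpha}/2}\otimes_{\mathbb{Z}}\mathbb{Z}[P^{-1}]$ intertwining each $\tilde{T}_p$ with the classical weight-$(k-n_{\alpha}/2)$ Hecke operator, as in Example \ref{example examples of hecke eigenforms}. Since the action is block-diagonal, passing to $\lambda$-eigenspaces commutes with the direct sum, giving $V_X(\lambda)_* \cong \bigoplus_{\alpha} V_{S^{n_{\alpha}}}(\lambda)_*$, each summand a degree-shifted copy of the classical eigenspace $V(\lambda)_*$.

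It then remains to reassemble these shifted copies as $D(V(\lambda)_*)\otimes_{\mathbb{Z}}\tilde{H}_*(X;\mathbb{Z})\otimes_{\mathbb{Z}}\mathbb{Z}[P^{-1}]$. Here I would use that $\tilde{H}_*(\bigvee_{\alpha}S^{n_{\alpha}};\mathbb{Z})$ is free with a single generator in each degree $n_{\alpha}$, so that in total degree $2k$ the tensor product contributes $\bigoplus_{\alpha} D(V(\lambda)_*)_{2k-n_{\alpha}} = \bigoplus_{\alpha} V(\lambda)_{k-n_{\alpha}/2}$. The degree-doubling operator $D$ is precisely what converts modular weight into elliptic-homology degree, since the grading of a modular form in $\elll_*$ is twice its weight (Definition \ref{def of elll-cohomology}); this matches, summand by summand, the computation of $V_X(\lambda)_{2k}$ from the previous paragraph, establishing \eqref{iso 302}. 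The half-integer weights $k - n_{\alpha}/2$ appearing when $n_{\alpha}$ is odd (cf. Remark \ref{remark on various topics}) cause no difficulty, since everything is indexed by the integer degree $2k$. Finally, unwinding Definition \ref{bimodules definition 1} once more and using that each $T_p$ preserves weight identifies the right-hand side of \eqref{iso 303}, namely $HH^0(\mathbf{A}_P; (M_*\otimes_{\mathbb{Z}}\mathbb{Z}[P^{-1}])^{\lambda})$, with the classical eigenform module $V(\lambda)_*$ — the degree-doubled version of which is exactly the tensor factor $D(V(\lambda)_*)$ appearing in \eqref{iso 302}.

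I expect the genuine obstacle to be the justification that the Hecke action is block-diagonal and not merely block-upper-triangular. Once naturality over the wedge is invoked the point is clean, but it is exactly this vanishing of the off-diagonal (lower Atiyah--Hirzebruch filtration) terms that guarantees every vector of classical eigenforms lifts to a topological eigenform, so that passing from the eigenspace inclusion implicit in Observation \ref{hecke action observation} to an equality is legitimate and no eigenforms are lost. Secondary care is needed in the coefficient bookkeeping noted above and in invoking the weight-rigidity of eigencharacters from Proposition \ref{prop:HH0} (which is where cofiniteness of $P$ enters), so as to be sure a single eigencharacter $\lambda$ is matched consistently across the sphere summands.
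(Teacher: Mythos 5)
Your proposal is correct and follows essentially the same route as the paper: isomorphisms \eqref{iso 301} and \eqref{iso 303} are formal unwindings of the definition of $HH^0$ of the bimodule $N^{\lambda}$, and \eqref{iso 302} comes from the fact that for a wedge of spheres the splitting of $\elll[P^{-1}]_*(X)$ from Proposition \ref{ahss splitting} is preserved by the Hecke operators (the paper states the diagonality without spelling out the naturality-under-inclusion-and-collapse argument you give, but that is exactly the intended justification), after which the suspension isomorphism and the degree-doubling bookkeeping identify each block with a shifted copy of $V(\lambda)_*$. Your closing appeal to the weight-rigidity of Proposition \ref{prop:HH0} is harmless but not actually needed, since the $\lambda$-eigenspace of a block-diagonal operator is automatically the direct sum of the blockwise eigenspaces.
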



\begin{proof}
Isomorphisms \eqref{iso 301} and \eqref{iso 303} were explained already in \cref{Hochschild cohomology and derived eigenforms} preceding Definition \ref{def:derivedEigenform}.
Isomorphism \eqref{iso 302} is simply due to $X$ being a wedge of spheres, so that the Hecke operators act diagonally on the summands in the decomposition 
\begin{align*}
 \elll_{2k}(X)\otimes_{\mathbb{Z}}\overline{\mathbb{Q}} 
  &\cong \coprod_i \elll_i \otimes_{\mathbb{Z}} \tilde{H}_{2k-i}(X; \mathbb{Q})\otimes_{\mathbb{Q}} \overline{\mathbb{Q}} \\
  &\cong \coprod_i M_{i/2} \otimes_{\mathbb{Z}} \tilde{H}_{2k-i}(X; \mathbb{Q})\otimes_{\mathbb{Q}} \overline{\mathbb{Q}} 
\end{align*}
obtained from Proposition \ref{ahss splitting}.
%
\end{proof}


\subsection{Multiplicity-one spaces}

More interesting examples of topological eigenforms originate from spaces that satisfy a suitable ``multiplicity one'' property, defined as follows.

\begin{definition}\label{def of multiplicity one}
Given a set $P$ of primes, we will say that a topological space $X$ {\em has multiplicity one (for $P$)} if, for each $k\in \frac{1}{2}\ZZ$ and each eigencharacter $\lambda: \mathbf{A}_P\rightarrow \mathcal{O}_{\Qbar}[P^{-1}]$ of the abstract Hecke algebra, the rank of the $\mathcal{O}_{\Qbar}[P^{-1}]$-module of eigenvectors in $ell_{2k}(X)\otimes \mathcal{O}_{\Qbar}[P^{-1}]$ of eigencharacter $\lambda$ is at most one.
\end{definition}

The classical multiplicity one theorem for cuspidal newforms is fundamental in the theory of modular forms. Even still, multiplicity one results have an additional importance in our topological setting which does not appear to have any analogue in the classical number-theoretic setting: if an element $f$ of $\elll_{2k}(X)$ is an eigenvector for $\tilde{T}_{\ell}$ for each prime $\ell$ in some set $P$ of primes, it is not immediately clear that $f$ is a topological Hecke eigenform for $P$, because $f$ might not be an eigenform for (for example) $\tilde{T}_{\ell^2} = \tilde{T}_{\ell}\tilde{T}_{\ell} - \frac{1}{p}\Psi^p$, since $f$ might not be an eigenform for the action of the Adams operation $\Psi^p$. This trouble is avoided whenever we have a multiplicity one theorem, as we show below in Prop. \ref{multiplicity one is convenient}.

\begin{prop}\label{multiplicity one is convenient} 
Let $P$ be a set of prime numbers and let $X$ be a topological space which has multiplicity one for $P$. Then the topological Hecke eigenforms of weight $k$ for $P$ are precisely the elements $f\in \elll_{2k}(X)\otimes_{\mathbb{Z}}\mathcal{O}_{\overline{\mathbb{Q}}}[P^{-1}]$ such that $f$ is an eigenform of $\tilde{T}_{\ell}$ for each prime $\ell\in P$.
\end{prop}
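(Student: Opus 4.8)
The plan is to prove the nontrivial inclusion, since the reverse is immediate: a topological Hecke eigenform satisfies $\tilde{T}_n f = \lambda(\tilde{T}_n)f$ for every $n$ whose prime factors lie in $P$, so in particular it is an eigenform for each $\tilde{T}_\ell$ with $\ell$ prime. Conversely, suppose $f \in \elll_{2k}(X)\otimes_{\mathbb{Z}}\mathcal{O}_{\Qbar}[P^{-1}]$ satisfies $\tilde{T}_\ell f = \lambda(T_\ell)f$ for each prime $\ell \in P$. Since $\mathbf{A}_P$ is the polynomial algebra on the generators $T_\ell$, the assignment $T_\ell \mapsto \lambda(T_\ell)$ extends uniquely to an eigencharacter $\lambda \colon \mathbf{A}_P \to \mathcal{O}_{\Qbar}[P^{-1}]$, and $f$ is a common eigenvector for the action of $\mathbf{A}_P$ with eigencharacter $\lambda$. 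Because of the Hecke relations \eqref{hecke relation 0} and \eqref{hecke relation}, it will suffice to show that $f$ is an eigenform for $\tilde{T}_{\ell^r}$ for every prime $\ell \in P$ and every $r \geq 0$; multiplicativity \eqref{hecke relation 0} then upgrades this to eigenform-ness for $\tilde{T}_n$ for all $n$ with prime factors in $P$, which is exactly the condition of Definition \ref{def of eigenform}.

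The crux, and the only place the multiplicity-one hypothesis enters, is showing that $f$ is an eigenform for each Adams operation $\Psi^\ell$, $\ell \in P$. Here I would use that $\Psi^\ell$ is degree-preserving and commutes with each $\tilde{T}_p$ --- a property of Baker's operations, coming from their construction as stable operations on the complex-oriented theory $\elll[P^{-1}]^*$. Since $\Psi^\ell$ is $\mathcal{O}_{\Qbar}[P^{-1}]$-linear after base change, for each prime $p \in P$ we obtain
\[
 \tilde{T}_p(\Psi^\ell f) = \Psi^\ell(\tilde{T}_p f) = \Psi^\ell(\lambda(T_p)f) = \lambda(T_p)\,\Psi^\ell f,
\]
so that $\Psi^\ell f$ is again a common eigenvector for $\mathbf{A}_P$ with the same eigencharacter $\lambda$, still lying in $\elll_{2k}(X)\otimes_{\mathbb{Z}}\mathcal{O}_{\Qbar}[P^{-1}]$ because $\Psi^\ell$ preserves degree. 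The multiplicity-one hypothesis (Definition \ref{def of multiplicity one}) forces the module of $\lambda$-eigenvectors in this degree to have rank at most one, so $\Psi^\ell f$ is a scalar multiple of $f$ (trivially if $f = 0$; and if $f \neq 0$ then $f$ spans the rank-one eigenspace containing it). Hence $f$ is an eigenform for $\Psi^\ell$.

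With this in hand I would finish by induction on $r$. The case $r = 0$ is trivial since $\tilde{T}_1 = \id$, and $r = 1$ is the hypothesis. For the inductive step, relation \eqref{hecke relation} gives
\[
 \tilde{T}_{\ell^{r+2}} f = \tilde{T}_\ell\bigl(\tilde{T}_{\ell^{r+1}} f\bigr) - \frac{1}{\ell}\Psi^\ell\bigl(\tilde{T}_{\ell^r} f\bigr),
\]
and once $f$ is known to be an eigenform for $\tilde{T}_{\ell^{r+1}}$, $\tilde{T}_{\ell^r}$ and $\Psi^\ell$, the right-hand side is a scalar multiple of $f$; thus $f$ is an eigenform for $\tilde{T}_{\ell^{r+2}}$, completing the induction. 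I expect the main obstacle to be exactly the Adams-operation step of the second paragraph: this is the subtlety flagged before the proposition, namely that eigenform-ness for the prime Hecke operators does not on its own control the operators $\Psi^\ell$ appearing in \eqref{hecke relation}. The whole content of the statement is that multiplicity one is precisely what pins $\Psi^\ell f$ down as proportional to $f$, and I would take care to justify cleanly the commutation $\tilde{T}_p\Psi^\ell = \Psi^\ell\tilde{T}_p$ that lets $\Psi^\ell f$ land back in the same eigenspace.
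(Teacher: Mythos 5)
Your proof is correct and follows essentially the same route as the paper: use commutativity of $\Psi^{\ell}$ with the Hecke operators plus multiplicity one to conclude $\Psi^{\ell}f$ is proportional to $f$, then propagate to all $\tilde{T}_{\ell^r}$ via relation \eqref{hecke relation} and to all $\tilde{T}_n$ via \eqref{hecke relation 0}. If anything, you are slightly more careful than the paper, since you verify that $\Psi^{\ell}f$ is a common eigenvector for the whole of $\mathbf{A}_P$ (not just for $\tilde{T}_{\ell}$) before invoking multiplicity one, which is what Definition \ref{def of multiplicity one} strictly requires.
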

\begin{proof}
Suppose $f$ is a Hecke eigenform for the action of $\tilde{T}_{p}$ for all $p\in P$, and suppose that $\tilde{T}_{\ell}f = \lambda_{\ell}f$ for some particular prime $\ell\in P$. Then we have $\tilde{T}_{\ell}\Psi^{\ell}f = \Psi^{\ell}\tilde{T}_{\ell}f = \Psi^{\ell}\lambda_{\ell}f = \lambda_{\ell}\Psi^{\ell}f$, i.e., $\Psi^{\ell}f$ is an eigenform for $\tilde{T}_{\ell}$  with eigenvalue $\lambda_{\ell}$. By multiplicity one, $\Psi^{\ell}f$ is a scalar multiple of $f$, i.e., $f$ is an eigenform for the action of the Adams operation $\Psi^{\ell}$. This argument applies to every prime $\ell\in P$, so $f$ is also an eigenform for the action of every Adams operator $\Psi^{\ell}$ with $\ell\in P$. Consequently, by the equation $\tilde{T}_{\ell^{r+2}} = \tilde{T}_{\ell}\tilde{T}_{\ell^{r+1}} - \frac{1}{\ell}\Psi^{\ell}\tilde{T}_{\ell^r}$ for each $r\geq 0$, $f$ is also an eigenform for the action of $\tilde{T}_{\ell^r}$ for every prime power $\ell^r$ with $\ell\in P$. Finally, since $\tilde{T}_{mn} = \tilde{T}_m \tilde{T}_n$ for coprime $m,n$, $f$ is an eigenform for the action of every Hecke operator $\tilde{T}_n$ with all prime factors of $n$ contained in $P$.
\end{proof}

\begin{example}
\leavevmode
\begin{itemize}
\item
If $P$ is cofinite, then the zero-sphere $S^0$ has multiplicity one for $P$. Indeed, a topological Hecke eigenform of weight $k$ on $S^0$ for $P$ is just a classical weight $k$ Hecke eigenform $f$ for all Hecke operators $T_p$, $p\in P$, so in this case the multiplicity one property is classical. Conjecturally, it even suffices to take $P\neq \emptyset$: Maeda's conjecture for level one forms would imply that the characteristic polynomial of $T_p$ has no repeated roots, for any prime $p\in \Pi$. 
\item
As a consequence,
by the stability of elliptic homology and the fact that Hecke operators are stable operators, all spheres have multiplicity one for all cofinite $P$. This is another example of a deduction that relies on the stability of the Hecke operators, which is a feature of Baker's Hecke operators; we point this out because some examples of {\em unstable} topological Hecke operators have also been studied, e.g. in \cite{MR1637129} and in \cite{MR2219307}.
\item In Theorem \ref{multiplicity one thm} we generalize the sphere examples greatly: if $X$ is a finite CW complex with torsion-free homology and at most one cell in each dimension, and $P$ is cofinite, then $X$ has multiplicity one for $P$.
\item On the other hand, a finite CW complex with more than one cell in some dimension is unlikely to have multiplicity one, even if the set of primes $P$ is assumed to be all primes. The simple example is when $X$ is the wedge product of two copies of $S^0$: then the eigenspace of each eigencharacter $\lambda$ on $\elll_{2k}(X)\otimes_{\mathbb{Z}}\mathcal{O}_{\mathbb{Q}}[P^{-1}]$ is isomorphic to a direct sum of two copies of the classical eigenspace of $\lambda$ on $M_k\otimes_{\mathbb{Z}}\mathcal{O}_{\mathbb{Q}}[P^{-1}]$, i.e., is either trivial or two-dimensional, contradicting the multiplicity one property.
\end{itemize}
\end{example}

Proposition \ref{prop:HH0}, and the density argument in its proof, plays a central role in much of the rest of this paper. One consequence is a ``multiplicity one'' theorem for a large class of CW-complexes:
\begin{theorem}\label{multiplicity one thm}
If $X$ is a finite CW-complex with torsion-free homology and at most one cell in each dimension, and $P$ is cofinite, then $X$ has multiplicity one for $P$. 
\end{theorem}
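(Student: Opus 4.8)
The plan is to combine the Atiyah--Hirzebruch splitting of Proposition \ref{ahss splitting} with the upper-triangularity of the Hecke action from Observation \ref{hecke action observation}, and then to reduce the multiplicity-one statement to a purely linear-algebraic fact about the associated graded. Since the rank of a module of eigenvectors may be computed after tensoring up to the fraction field $\Qbar$ of $\mathcal{O}_{\Qbar}[P^{-1}]$, and since the hypotheses on $X$ force $\tilde{H}_b(X;\mathbb{Z})$ to be free of rank at most one in each degree $b$, Proposition \ref{ahss splitting} gives a decomposition $\elll_{2k}(X)\otimes_{\mathbb{Z}}\Qbar \cong \bigoplus_b W_b$, where $W_b = \tilde{H}_b(X;\mathbb{Z})\otimes_{\mathbb{Z}} M_{k-b/2}\otimes_{\mathbb{Z}}\Qbar$ is a single copy of the weight-$(k-b/2)$ modular forms over $\Qbar$ (and is zero unless $2k-b$ is a nonnegative even integer, so that only genuine integer weights ever occur). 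By Observation \ref{hecke action observation}, each Hecke operator $\tilde{T}_\ell$ preserves the increasing skeletal filtration $F_b = \bigoplus_{a\leq b} W_a$, and the induced action on the associated graded piece $\gr_b = W_b$ is the classical action of $T_\ell$ on $M_{k-b/2}$.

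The heart of the argument is the following elementary lemma: if a commutative algebra acts on a finite-dimensional filtered vector space preserving a finite increasing filtration, and if the total dimension of the $\lambda$-eigenspaces in the associated graded is at most one, then the $\lambda$-eigenspace of the whole space is at most one-dimensional. First I would prove this by a ``top component'' argument: given a nonzero $\lambda$-eigenvector $v$, let $b$ be the highest degree in which $v$ has nonzero graded component; projecting the eigenvector equations to $\gr_b$ shows that this component is a $\lambda$-eigenvector in $\gr_b$, so the associated-graded $\lambda$-eigenspace is nonzero in exactly one degree $b_0$ and is one-dimensional there. Consequently every nonzero $\lambda$-eigenvector has top degree exactly $b_0$, and given two such eigenvectors one may subtract a scalar multiple to cancel the top component, producing a $\lambda$-eigenvector of strictly smaller top degree, which must therefore be zero; hence the two eigenvectors are proportional.

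It then remains to verify the hypothesis that the associated-graded $\lambda$-eigenspace $\bigoplus_b (W_b)_\lambda$ (where $(W_b)_\lambda$ denotes the $\lambda$-eigenspace of $W_b$) is at most one-dimensional in total, and this is where the two key inputs enter. Within a fixed degree $b$, the module $W_b$ is a single copy of $M_{k-b/2}$ (here the at-most-one-cell hypothesis is essential), so classical multiplicity one for level-one modular forms---together with the cofiniteness of $P$, which guarantees that the eigenvalues $\{\lambda(T_\ell)\}_{\ell\in P}$ determine the normalized eigenform---shows $\dim (W_b)_\lambda \leq 1$. Across distinct degrees, the decisive point is that a single eigencharacter cannot occur in two different weights: this is precisely the Galois-representation and Chebotarev density argument from the proof of Proposition \ref{prop:HH0}, which shows that $\lambda$ determines $\det\rho_\lambda(\Frob_\ell) = \ell^{w-1}$ and hence the weight $w = k-b/2$. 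Therefore at most one value of $b$ contributes, and the total associated-graded eigenspace has dimension at most one.

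I expect the main obstacle to be the passage from the associated graded back to the genuine eigenspace, since eigenvectors for the non-semisimple (merely upper-triangular) Hecke action need not be homogeneous for the skeletal filtration; the filtered-module lemma above is exactly what is needed to control this, and its proof is the one genuinely new step, with the weight-rigidity input being supplied wholesale by Proposition \ref{prop:HH0}.
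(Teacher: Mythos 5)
Your proposal is correct and follows essentially the same route as the paper: exploit the Hecke-equivariance of the skeletal filtration to reduce to the associated graded (the paper phrases this as comparing an upper-triangular matrix with its diagonal part, which is exactly your filtered ``top component'' lemma), and then invoke classical multiplicity one within a weight together with the Chebotarev/Galois-representation weight-rigidity of Proposition \ref{prop:HH0} to see that the graded eigenspace is at most one-dimensional. Your explicit statement and proof of the filtered-vector-space lemma for the \emph{simultaneous} eigenspace of the whole commuting family is a slightly more careful rendering of the linear algebra the paper cites, but the substance is identical.
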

\begin{proof}
Let $X^0\subseteq X^1 \subseteq \dots \subseteq X^n$ be the skeleta in a minimal CW-decomposition (i.e., one which does not include any cells which are capped off) of $X$. 
Then the map $\elll_*(X^j) \rightarrow \elll_*(X)$ is injective for each $j$, by Proposition \ref{ahss splitting}, and this map is equivariant with respect to the topological Hecke operators. Consequently, in the skeletal/Atiyah-Hirzebruch filtration on $H_*(X; \mathbb{Z})\otimes_{\mathbb{Z}} \elll_*(S^0)\cong \elll_*(X)$, the action of the Hecke operators sends elements in filtration $j$ to elements in filtration $j$. That is, the action of each Hecke operator is given by an upper-triangular matrix, after we choose a generator for each $H_j(X; \mathbb{Z})$ and consequently a basis in which to write the Hecke operators as matrices.

Now we use some elementary linear algebra: if $M$ is an upper-triangular square matrix, and $M^{\prime}$ is the same matrix but with all entries not on the main diagonal set to zero, then the eigenvalues of $M$ are the same as those of $M^{\prime}$, and furthermore, for each eigenvalue $\lambda$ of $M$, the dimension of the $\lambda$-eigenspace of $M$ is at most the dimension of the $\lambda$-eigenspace of $M^{\prime}$. 

If we let $M$ be the upper triangular matrix expressing the action of a Hecke operator $T_{\ell}$, then $M^{\prime}$ is (with respect to the same basis for $H_*(X;\mathbb{Z})$) the matrix expressing the action of the Hecke operator $T_{\ell}$ on a CW-complex with the same cells as $X$ but in which all attaching maps are nulhomotopic. 
So, if we have multiplicity one for a finite wedge of spheres of pairwise distinct dimensions, then we have multiplicity one for the space $X$ as described in the statement of the theorem.

Consequently, all we have left to do is to prove multiplicity one for finite wedges of spheres of pairwise distinct dimensions: but this is a simple corollary of Prop. \ref{prop:wedges}.
\end{proof}

\section{Topological Hecke eigenforms over CW-complexes with two cells.}

\label{Extending a Hecke eigenform...}
\subsection{Extending a Hecke eigenform from a cell.}
Our next task is to carry out computations, in the simplest nontrivial cases, to see exactly what the relationship is between the properties (eigenvalues, dimension counts, etc.) of topological Hecke eigenforms over $X$ and the homotopical properties of the space $X$.

The simplest case of topological Hecke eigenforms is the case of spheres and wedges of spheres. That case was already handled completely in Proposition \ref{prop:wedges}. The next case is that of a $2$-cell complex with non-nulhomotopic attaching map, i.e., a suspension of the homotopy cofiber $\cof f$ of a stable\footnote{Since elliptic homology is a generalized homology theory and Baker's Hecke operations are stable operations, we have a natural isomorphism $\elll_*(\Sigma X) \cong \elll_{*-1}(X)$, i.e., $\elll^*$ turns (de)suspensions into shifts of grading, and consequently the collection of Hecke eigenforms over $X$ is a {\em stable} homotopy invariant of $X$. In particular, only the {\em stable} homotopy class of an attaching map in a CW-complexes has an effect on the Hecke eigenforms over that CW-complex.} map $f: S^d \rightarrow S^0$. We refer the reader to the introduction to this paper for an exposition on the relationship between $2$-cell complexes and the stable homotopy groups of spheres.

To deduce a relationship between $f\in \pi_d(S^0)$ and number-theoretic properties of the topological Hecke eigenforms over $\cof f$, we need to be given $f$ in some kind of understandable fashion. One way for an element of $\pi_d(S^0)$ to be given to us is as an element of the $E_2$-term of the Adams-Novikov spectral sequence
\begin{align}
 \label{anss E2 1}  E_2^{s,t}&\cong \Ext^{s,t}_{\gr\ MU_*MU-comod}(MU_*,MU_*) \\
 \label{anss E2 2}           &\cong \Cotor^{s,t}_{\gr\ MU_*MU-comod}(MU_*,MU_*)\\
 \label{anss E2 3}           &\cong H^{s}_{fl}(\mathcal{M}_{fg};\omega^{\otimes t/2})\\
 \label{anss E2 4}                   &\Rightarrow \pi_{t-s}(S^0),\end{align} 
with $t-s = d$. 
The bigraded abelian group \eqref{anss E2 1} is relative $\Ext$ in the abelian category of graded comodules\footnote{Throughout, we will write ``comodule'' as shorthand for ``left comodule.'' Recall that, given a commutative Hopf algebroid $(A,\Gamma)$, a {\em left $\Gamma$-comodule} is a left $A$-module $M$ equipped with a left $A$-linear map $M \rightarrow \Gamma\otimes_AM$ which is counital and coassociative. If $\Gamma$ is flat over $A$, then the category of left $\Gamma$-comodules is abelian and has enough relative injectives. Appendix 1 of \cite{MR860042} is the standard reference for these definitions and results.} over the Hopf algebroid $MU_*MU$. These $\Ext$ groups are taken relative to the class of $MU_*MU$-comodules tensored up from $MU_*$. A good reference for relative $\Ext$ is chapter IX of \cite{MR1344215}. Meanwhile, \eqref{anss E2 2} is the derived functors of the cotensor product in the category of graded comodules over the Hopf algebroid $MU_*MU$, and \eqref{anss E2 3} is the flat cohomology of the moduli stack of one-dimensional formal groups over $\Spec \mathbb{Z}$; see \cite{MR860042} for \eqref{anss E2 1} and \eqref{anss E2 2}, and \cite{coctalos} for \eqref{anss E2 3}.

The stable homotopy groups of spheres are finitely generated abelian groups, and in positive degrees they are finite; hence $\pi_n(S^0) \cong \bigoplus_p \left( \pi_n(S^0)\right)_{(p)}$ for all $n>0$, and so $p$-local methods are usually used to calculate stable homotopy groups of spheres. In particular, for each prime $p$, the $p$-localization $MU_{(p)}$ of the complex bordism spectrum $MU$ splits as a wedge of suspensions of a smaller ring spectrum $BP$ (which depends on the prime $p$, but the choice of $p$ is suppressed from the notation for $BP$), and the $p$-localization of spectral sequence \eqref{anss E2 4} is isomorphic to the $BP$-Adams spectral sequence
\begin{align}
 \label{local anss E2 1}  E_2^{s,t}&\cong \Ext^{s,t}_{\gr\ BP_*BP-comod}(BP_*,BP_*) \\
 \label{local anss E2 2}           &\cong \Cotor^{s,t}_{\gr\ BP_*BP-comod}(BP_*,BP_*)\\
 \label{local anss E2 3}           &\cong H^{s}_{fl}(\mathcal{M}_{fg}\times_{\Spec\mathbb{Z}}\Spec\mathbb{Z}_{(p)};\omega^{\otimes t/2})\\
 \label{local anss E2 4}                   &\Rightarrow \pi_{t-s}(S^0)_{(p)},\end{align} 
where $BP$ denotes $p$-local Brown-Peterson homology, and \eqref{local anss E2 3} is the flat cohomology of the moduli stack of one-dimensional formal groups over $\Spec \mathbb{Z}_{(p)}$. Spectral sequences \eqref{anss E2 4} and \eqref{local anss E2 4} are both called ``the Adams-Novikov spectral sequence,'' although the second is a localization of the first. In this article  we will refer to \eqref{local anss E2 4} as ``the Adams-Novikov spectral sequence'' or ``the ANSS,'' using the phrase ``the global ANSS'' for spectral sequence \eqref{anss E2 4}.

Now suppose we have an element $f\in \pi_{d}(S^0)_{(p)}$. Then $f$ comes from some element\footnote{Possibly more than one, since the ANSS at $p=2$ has many nonzero differentials, and at odd primes the ANSS has nonzero differentials starting with the $E_{2p-1}$-term.} in $\Ext^{s,s+d}_{\gr\ BP_*BP-comod}(BP_*,BP_*)$ for some $s$. One can ask what the least $s$ is such that $f$ comes from an element in $\Ext^{s,s+d}_{\gr\ BP_*BP-comod}(BP_*,BP_*)$. This numerical invariant of elements in $\pi_{d}(S^0)_{(p)}$ is called the {\em $BP$-Adams degree}, and it has an excellent topological description: an element $f\in \pi_{d}(S^0)_{(p)}$ is in $BP$-Adams degree $\geq m$ if and only if $f: S^d \rightarrow S^0$ is the composite of at least $m$ maps of spectra, each of which becomes nulhomotopic after smashing with $BP$. So, for example, $f$ is in $BP$-Adams degree $0$ if and only if $f$ induces a nonzero map in $BP$-homology $\Sigma^dBP_* \cong BP_*(S^d) \rightarrow BP_*(S^0) \cong BP_*$. We refer to the elements in spectral sequence \eqref{local anss E2 4} for a fixed value of $s$ as {\em the Adams-Novikov $s$-line.}

The first calculation one makes with the Adams-Novikov spectral sequence (see chapter 4 of \cite{MR860042}) is that $\Ext^{0,t}_{\gr\ BP_*BP-comod}(BP_*,BP_*)$ is isomorphic to $\mathbb{Z}_{(p)}$, concentrated in $t=0$. Consequently the Adams-Novikov $0$-line is almost trivial: it consists of a single copy of $\mathbb{Z}_{(p)}$, concentrated in bidegree $(0,0)$. (As one might expect, at every prime $p$, this copy of $\mathbb{Z}_{(p)}$ survives the spectral sequence to become $\pi_0(S^0)_{(p)} \cong \mathbb{Z}_{(p)}$.) So, if we take the homotopy cofiber of an element $f\in \pi_d(S^0)_{(p)}$ of $BP$-Adams degree $0$, then the resulting spectrum has holomorphic elliptic homology simply the mod $n$ reduction of the ring of holomorphic modular forms, for some $n\in \mathbb{Z}_{(p)}$, with the topological Hecke operators acting simply as the mod $n$ classical Hecke operators.

So we move on to elements $f\in \pi_d(S^0)_{(p)}$ of $BP$-Adams degree $1$. The ANSS $1$-line is completely calculated; we give the answer here for all $p>2$, where it has a particularly clean form. See \cite{MR737778} for this calculation. We have  
\begin{align}
\label{novikov 1-line} \Ext^{1,t}_{\gr\ BP_*BP-comod}(BP_*,BP_*) &\cong \left\{ \begin{array}{ll}
   \mathbb{Z}/p^{1+\nu_p(t)}\mathbb{Z} & \mbox{\ if\ } 2(p-1)\mid t>0 \\
   0 &\mbox{\ otherwise,}
\end{array}\right.
\end{align}
and when $p>2$ the Adams-Novikov $1$-line neither supports nor is hit by nonzero differentials. Consequently \eqref{novikov 1-line} is a complete picture of the Adams-Novikov $1$-line, at all pages of the ANSS, at odd primes. 

A generator for 
$E_2^{1,2n(p-1)}$ is given by the $1$-cocycle in the $(BP_*,BP_*BP)$ cobar complex
\begin{equation}\label{cocycle rep 10943} \sum_{i=1}^n \binom{n}{i}p^{i-1-\nu_p(n)}  v_1^{n-i} t_1^{i}. \end{equation}

The above cocycle representative is not difficult to calculate (see \cref{appendix on cobar complexes} for the relevant tools), and is certainly not original, and although it probably occurs somewhere in the existing literature, we do not know specifically where.

At odd primes, the cohomology class of \eqref{cocycle rep 10943} in $H^1\left(\mathcal{M}_{fg}\times_{\Spec\mathbb{Z}}\Spec\mathbb{Z}_{(p)};\omega^{n(p-1)}\right)$ survives the Adams-Novikov spectral sequence to generate a summand $\mathbb{Z}/p^{1+\nu_p(n)}\mathbb{Z}$ of $\pi_{(2p-2)n +1}(S^0)$. We refer to the cohomology class of \eqref{cocycle rep 10943} as $v_1^n\alpha_1$; one can sort through the relationship between the cocycle \eqref{cocycle rep 10943} and certain Bockstein spectral sequence differentials in section 5.1 of \cite{MR860042} if one wants to express $v_1^n\alpha_1$ in terms of the divided alpha element $\alpha_{n/(\nu_p(n)+1)}$.\footnote{Since this paper may have readers who are number theorists and not topologists, we remark that at each prime $p$ the {\em divided alpha-family} is the first and shortest-period of an infinite family of quasiperiodic families---the beta-family, the gamma-family, etc.---of $p$-power-torsion elements in the stable homotopy groups of spheres. These do not exhaust the stable homotopy groups of spheres, but they play an important role, partly because the $n$th Greek letter family is the $2p^n(p-1)$-quasiperiodic family which appears on the lowest possible line (i.e., the $s=n$ line) in the Adams-Novikov spectral sequence, and partly because the $n$th Greek letter family (as well as other $2p^n(p-1)$-quasiperiodic families in the Adams-Novikov $E_2$-term) is computable from the cohomology of the automorphism group scheme of a height $n$ formal group over $\mathbb{F}_p$, via some (very highly nontrivial) spectral sequence calculations. Chapters 5 and 6 of \cite{MR860042} are standard for this material.}



Assume $j<1+\nu_p(n)$, so that $p^jv_1^n\alpha_1\neq 0\in \pi_{2(p-1)(n+1)-1}(S^0)$.
By Proposition \ref{ahss splitting} (or just by analysis of the long exact sequence induced in $BP$-homology by the cofiber sequence $S^0 \rightarrow \cof p^jv_1^n\alpha_1\rightarrow S^{(2p-2)n}$), we have an isomorphism $BP_*(\cof p^jv_1^n\alpha_1) \cong BP_*\oplus \Sigma^{(2p-2)n}BP_*$ of graded $BP_*$-modules. In order to calculate the Hecke action on $\elll_*(\cof p^jv_1^n\alpha_1)$, we will need to know something about the $BP_*BP$-coaction\footnote{It is standard that, whenever $E$ is a ring spectrum and $X$ a spectrum, we have a coaction of $E_*E$ on $E_*X$ given by applying $\pi_*$ to the map of spectra $E\smash X \stackrel{\id_E \smash \eta\smash \id_X}{\longrightarrow} E\smash E\smash X$, where $\eta: S\rightarrow E$ is the unit map of the ring spectrum $E$.} $\psi: BP_*(\cof p^jv_1^n\alpha_1) \rightarrow BP_*BP\otimes_{BP_*}BP_*(\cof p^jv_1^n\alpha_1)$. We write $\tilde{\psi}$ for the composite of the coaction map with the evident isomorphisms:
\begin{align*} 
 \tilde{\psi}: BP_*\oplus \Sigma^{(2p-2)n}BP_* &\stackrel{\cong}{\longrightarrow} BP_*(\cof p^jv_1^n\alpha_1) \\
 &\stackrel{\psi}{\longrightarrow} BP_*BP \otimes_{BP_*} BP_*(\cof p^jv_1^n\alpha_1) \\
 &\stackrel{\cong}{\longrightarrow}  BP_*BP\oplus \Sigma^{(2p-2)n}BP_*BP.\end{align*}
Inclusion of the $0$-skeleton $S^0\rightarrow \cof p^jv_1^n\alpha_1$ induces the evident map of graded $BP_*BP$-comodules $BP_*\rightarrow BP_*(\cof p^jv_1^n\alpha_1)$ which picks out the left-hand summand in $BP_*(\cof p^jv_1^n\alpha_1)\cong BP_*\oplus \Sigma^{(2p-2)n}BP_*$. Hence $\tilde{\psi}(x,0) = (x,0)$ for all $x\in BP_*$, since the left unit\footnote{By convention, given an element $a\in A$, we also write $a$ to denote the element $\eta_L(a)$ in $\Gamma$. That is, we treat the left unit map $\eta_L$ as a canonical embedding of $A$ into $\Gamma$.} map $\eta_L: BP_* \rightarrow BP_*BP$ is the $BP_*BP$-coaction map on $BP_*(S^0)\cong BP_*$.

In the cobar complex \begin{equation}\label{cobar cplx 1} \xymatrix{
0 \ar[r]\ar[d] & 0 \ar[d] \\
BP_*(\cof p^jv_1^n\alpha_1) \ar[d]^{d^0}\ar[r]^{\cong} &  BP_*\oplus \Sigma^{(2p-2)n}BP_* \ar[d]^{\tilde{d}^0} \\
BP_*BP \otimes_{BP_*} BP_*(\cof p^jv_1^n\alpha_1) \ar[d]^{d^1}\ar[r]^{\cong} & BP_*BP \oplus \Sigma^{(2p-2)n}BP_*BP \ar[d]^{\tilde{d}^1} \\
BP_*BP^{\otimes_{BP_*} 2} \otimes_{BP_*} BP_*(\cof p^jv_1^n\alpha_1)\ar[d]^{d^2}\ar[r]^{\cong} & BP_*BP^{\otimes_{BP_*}2} \oplus \Sigma^{(2p-2)n}BP_*BP^{\otimes_{BP_*}2}\ar[d]^{\tilde{d}^2} \\
 \vdots  & \vdots ,
}\end{equation}
the first two differentials 
are as follows (see \cref{appendix on cobar complexes}):
\begin{align*}
 d^0(x) &= 1\otimes x - \psi(x) \\
 d^1(y\otimes x) &= 1\otimes y\otimes x - \Delta(y)\otimes x + y\otimes \psi(x),\mbox{\ \ i.e.,}\\
 \tilde{d}^0(x_0,x_1) &= \left(\eta_R(x_0) - \eta_L(x_0) - \zeta(x_1),\eta_R(x_1) - \eta_L(x_1)\right) \\
 \tilde{d}^1(y_0,y_1) &= \left(1\otimes y_0 - \Delta(y_0) + y_0\otimes 1 - \zeta(y_1), 1\otimes y_1 - \Delta(y_1) + y_1\otimes 1\right) 
\end{align*}
where $\zeta$ is the twist arising from the nontriviality of $p^jv_1^n\alpha_1$ when $j\leq \nu_p(n)$.
Taking the cofiber of $p^jv_1^n\alpha_1: S^{(2p-2)n-1}\rightarrow S^0$ kills the element $p^jv_1^n\alpha_1\in \pi_{(2p-2)n-1}(S^0)$ represented in the Adams-Novikov $E_2$-term by the element $\left[ p^j\sum_{i=1}^n \binom{n}{i} p^{i-1-\nu_p(n)}  v_1^{n-i} t_1^{i}\right]$ of  $\Ext^{1,(2p-2)n}_{BP_*BP-comod}(BP_*,BP_*)$, so the $1$-cocycle $\left(p^j\sum_{i=1}^n \binom{n}{i} p^{i-1-\nu_p(n)}  v_1^{n-i} t_1^{i},0\right)\in BP_*BP\oplus \Sigma^{(2p-2)n}BP_*BP$ must be a coboundary.
Since $j<1+\nu_p(n)$, for degree reasons this is only possible if $\zeta(1) = \left(p^j\sum_{i=1}^n \binom{n}{i} p^{i-1-\nu_p(n)}  v_1^{n-i} t_1^{i},0\right)$. (Of course, if $j\geq 1+\nu_p(n)$, we simply have $\zeta(1) =0$.)


Now it is possible to continue in a totally $p$-local way, by constructing a $p$-local splitting of elliptic cohomology much like the $p$-local splitting of $MU$ into a wedge of copies of $BP$, but eventually we want to return to a more global picture, since we want to understand Hecke eigenforms in $\elll_*(\cof p^jv_1^n\alpha_1)$---not some summand in a local splitting of $\elll_*(\cof p^jv_1^n\alpha_1)$---in terms of classical Hecke eigenforms.
So we return to working with $MU_{(p)}$, rather than $BP$.
Proposition \ref{ahss splitting} applies to $MU_{(p)}$ just as well as to $BP$,
so $(MU_{(p)})_*(\cof p^jv_1^n\alpha_1) \cong (MU_{(p)})_* \oplus \Sigma^{(2p-2)n}(MU_{(p)})_*$. The attaching map $p^jv_1^n\alpha_1$ is represented by a $1$-cocycle $p^j\sum_{i=1}^n \binom{n}{i} p^{i-1-\nu_p(n)}  v_1^{n-i} t_1^{i}$ in the image of the composite of the forgetful map $MU_*MU\rightarrow BP_*BP$ with the ring map $C: BP_*BP\rightarrow (MU_{(p)})_*MU_{(p)}$ 
which classifies the Cartier $p$-typicalization map of the universal strict isomorphism of $1$-dimensional formal group laws.
Consequently the composite map
\begin{align*} 
 \tilde{\psi}_{MU}: (MU_{(p)})_*\oplus \Sigma^{(2p-2)n}(MU_{(p)})_* 
  &\stackrel{\cong}{\longrightarrow} (MU_{(p)})_*(\cof p^jv_1^n\alpha_1) \\
  &\stackrel{\psi}{\longrightarrow} (MU_{(p)})_*MU_{(p)} \otimes_{(MU_{(p)})_*} (MU_{(p)})_*(\cof p^jv_1^n\alpha_1) \\
  &\stackrel{\cong}{\longrightarrow}  (MU_{(p)})_*MU_{(p)}\oplus \Sigma^{(2p-2)n}(MU_{(p)})_*MU_{(p)}\end{align*}
sends $(x_0,x_1)$ to $\left(x_0 + x_1C\left(p^j\sum_{i=1}^n \binom{n}{i} p^{i-1-\nu_p(n)}  v_1^{n-i} t_1^{i}\right),x_1\right)$.

Now suppose we have a pair $(f_k,f_{k-(p-1)n})$ of modular forms of level 1, of weights $k$ and $k-(p-1)n$ respectively. We regard $(f_k,f_{k-(p-1)n})$ as an element of 
\[  \elll_{2k}(\cof p^jv_1^n\alpha_1) \cong \left(\elll_* \oplus \Sigma^{(2p-2)n}\elll_*\right)_{2k} \cong \elll_{2k}\oplus \elll_{2(k-(p-1)n)}.\]
The $MU_{(p)}$-coaction on $\elll_{*}(\cof p^jv_1^n\alpha_1)$ sends $(f_k,f_{k-(p-1)n})$ to 
\[ \left(f_k + f_{k-(p-1)}C\left(p^j\sum_{i=1}^n \binom{n}{i} p^{i-1-\nu_p(n)}  v_1^{n-i} t_1^{i}\right),f_{k-(p-1)n}\right).\] This tells us the behavior of the map ``$1\otimes\overline{mu}$,'' in the case $X = \cof p^jv_1^n\alpha_1$,
under the composite
\begin{align*} 
 \elll_*(X) 
  &\stackrel{\cong}{\longrightarrow} \elll_*\otimes_{MU_*}MU_*(X) \\
  &\stackrel{1\otimes\overline{mu}}{\longrightarrow} \elll_*\otimes_{MU_*}MU_*MU \otimes_{MU_*}MU_*(X) \\
  &\stackrel{\overline{H}\otimes 1}{\longrightarrow} \elll_*\otimes_{MU_*}MU_*(X) \\
  &\stackrel{\cong}{\longrightarrow} \elll_*(X) 
\end{align*}
which Baker uses to define the Hecke action on elliptic homology of a spectrum $X$ (\cite{MR1037690}, bottom of p. 9). Note that Baker works with elliptic cohomology, not elliptic homology, but to switch between the two is entirely formal. The map $\overline{H}$ depends on a choice of prime $\ell$, and it can be calculated as follows: write any element $f\otimes u \in \elll_*\otimes_{MU_*}MU_*(X)$ as $\sum_n f_n\otimes u_n$, where $u_n \in \eta_R(MU_*\otimes_{\mathbb{Z}} \mathbb{Q})$ and $f_n \in \elll_*\otimes_{\mathbb{Z}} \mathbb{Q}$. Then (\cite{MR1037690}, middle of p. 9)
\begin{equation}\label{baker eq 2} 
\overline{H}(f_n\otimes v_n) = T_{\ell}(f_n)\otimes v_n
\end{equation}
where $T_{\ell}$ is a classical Hecke operator on level one modular forms.

So, if we choose some prime $\ell$, we have a calculation of the topological Hecke operator $\tilde{T}_{\ell}$ on the elliptic homology of $\cof p^jv_1^n\alpha_1$. Given $(x_0,y_0)\in \elll_*\oplus \Sigma^{(2p-2)n}\elll_*\cong \elll_*(\cof p^jv_1^n\alpha_1)$, we have:
\begin{description}
\item[If $p=\ell$] One has to invert $\ell$ to get Baker's action of $\tilde{T}_{\ell}$ on elliptic homology. Of course $\elll_*(-)[1/p] \cong \pi_*\left(\elll \smash - \smash S^0[1/p]\right)$, so we have $\cof p^jv_1^n\alpha_1 \smash S^0[1/p] \simeq S^0[1/p]\vee S^{(2p-2)n}[1/p],$
since the attaching map \[ p^jv_1^n\alpha_1\in \pi_{(2p-2)(n+1)-1}(S^0)\] is $p$-power-torsion and hence becomes nulhomotopic on inverting $p$. So $\tilde{T}_p$ acts on $\elll_*(\cof p^jv_1^n\alpha_1)[1/p]$ the same way that $\tilde{T}_p$ acts on $\elll_*(S^0\vee S^{(2p-2)n})[1/p]$, i.e., diagonally by the classical Hecke action. 

So a weight $k$ eigenform for $\tilde{T}_p$ on $\cof p^jv_1^n\alpha_1$ is simply a pair $(f_k,f_{k-(p+1)n})$ of classical modular forms, with $f_k$ weight $k$ and $f_{k-(p+1)n}$ weight $k-(p+1)n$, such that $f_k$ and $f_{k-(p+1)n}$ are each eigenforms for $T_p$ in the classical sense, with the same eigenvalue. (The density argument of Proposition \ref{prop:HH0} then implies that, if $(f_k,f_{k-(p+1)n})$ is a topological Hecke eigenform for all but finitely many primes, then either $f_k$ or $f_{k-(p+1)n}$ must be zero; this is the same argument as in Proposition \ref{examples of hecke eigenforms}.)
\item[If $p\neq \ell$]
\begin{align}
\nonumber T_{\ell}(f_0,f_1) 
  &= \left(\overline{H}\otimes 1\right)(f_0\otimes 1 + f_1\otimes p^j\sum_{i=1}^n \binom{n}{i} p^{i-1-\nu_p(n)}  v_1^{n-i} t_1^{i},f_1\otimes 1) \\
\nonumber  &= \left(\overline{H}\otimes 1\right)\left(f_0\otimes 1 + p^j f_1\otimes \sum_{i=1}^n \binom{n}{i} p^{i-1-\nu_p(n)}  \eta_L(v_1^{n-i}) \left( \frac{1}{p}\eta_R(v_1) - \frac{1}{p}\eta_L(v_1)\right)^{i},f_1\otimes 1\right) \\
\nonumber  &= \left(\overline{H}\otimes 1\right)\left(f_0\otimes 1 + p^j f_1\otimes \sum_{i=1}^n \binom{n}{i} p^{-1-\nu_p(n)}  \eta_L(v_1^{n-i}) \sum_{k=0}^i (-1)^k\binom{i}{k}\eta_R(v_1)^{i-k} \eta_L(v_1)^k,f_1\otimes 1\right) \\
\nonumber  &= \left( T_{\ell}(f_0)    
  + \sum_{a=0}^n \sum_{i=\max\{1,a\}}^n p^{j-1}\binom{n}{i}\binom{i}{i-a}\frac{(-1)^{i-a}}{n}  T_{\ell}(f_1 v_1^{n-a}) \xi(\eta_R(v_1)^{a}),
   T_{\ell}(f_1)\right) \\
\label{eq fj049344}  &= \left( T_{\ell}(f_0) 
  + p^{j-1}\sum_{a=0}^n \frac{(-1)^a}{n}\sum_{i=\max\{1,a\}}^n (-1)^{i} \binom{n}{i} \binom{i}{i-a}  T_{\ell}(f_1 v_1^{n-a}) \xi(\eta_R(v_1)^{a}),
   T_{\ell}(f_1)\right) .
\end{align}
\end{description}

\begin{theorem}\label{eigenform conditions thm}
Let $j$ and $n$ be nonnegative integers with $j\leq \nu_p(n)$, and let $X$ denote the homotopy cofiber of $p^jv_1^n\alpha_1 \in \pi_{(2p-2)(n+1) - 1}(S^0)$. Let $P$ be a cofinite set of prime numbers not containing $p$.
Suppose we are given a finite extension $K/\mathbb{Q}$ and 
an eigencharacter $\lambda: \mathbf{A}_P \rightarrow \mathcal{O}_K[P^{-1}]$.
Given holomorphic modular forms $(f_0,f_1)$ over $\mathcal{O}_K[P^{-1}]$ of level $1$ and weights $k$ and $k-(p-1)n$ respectively, the element $(f_0,f_1) \in \left(M_{k}\oplus M_{k-(p-1)n}\right)\otimes_{\mathbb{Z}}\mathcal{O}_K[P^{-1}] \cong \elll_{2k}(X)\otimes_{\mathbb{Z}}\mathcal{O}_K[P^{-1}]$ is a topological eigenform over $X$ for $P$ with eigencharacter $\lambda$ if and only if
\begin{align}
\label{eigenform condition 1} T_{\ell}(f_1) &= \lambda(\ell)f_1, \mbox{\ \ and} \\
\label{eigenform condition 2} 
T_{\ell}(f_0) + p^{j-1-\nu_p(n)} \left(T_{\ell}(f_1E_{p-1}^n) - T_{\ell}(f_1)E_{p-1}^n \right) &= \lambda(\ell)f_0. 
%
\end{align}
for all $\ell \in P$. 
\end{theorem}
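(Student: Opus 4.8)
The plan is to reduce the eigenform condition to a single prime-by-prime check, then extract the two stated equations from the explicit formula \eqref{eq fj049344} for the Hecke action, with the binomial identity of Lemma \ref{binomial coeffs lemma} doing the essential computational work.

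First I would invoke multiplicity one. The space $X$ is the homotopy cofiber of a map between two spheres, hence a finite CW-complex with torsion-free homology and at most one cell in each dimension, so Theorem \ref{multiplicity one thm} applies and $X$ has multiplicity one for the cofinite set $P$. By Proposition \ref{multiplicity one is convenient}, it therefore suffices to verify that $(f_0,f_1)$ is an eigenvector for $\tilde{T}_{\ell}$ with eigenvalue $\lambda(\ell)$ for each prime $\ell\in P$ separately; the eigenform conditions for all composite $\tilde{T}_m$ then follow automatically. This reduces the theorem to comparing the two graded components of $\tilde{T}_{\ell}(f_0,f_1)$ with those of $\lambda(\ell)(f_0,f_1)$, for each $\ell\in P$.

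Next, since $p\notin P$, every $\ell\in P$ satisfies $\ell\neq p$, and I may apply formula \eqref{eq fj049344}. The top-cell component is immediately $T_{\ell}(f_1)$, which gives condition \eqref{eigenform condition 1}. The bottom-cell component is $T_{\ell}(f_0)$ plus the double sum, and simplifying that double sum is the heart of the argument. For fixed $a$, after rewriting $\binom{i}{i-a}=\binom{i}{a}$ and reindexing, the inner sum over $i$ is governed by Lemma \ref{binomial coeffs lemma}: it vanishes for $1\leq a\leq n-1$, contributes at $a=n$, and -- crucially, because the summation omits the $i=0$ term -- contributes a boundary term at $a=0$. Thus only $a=0$ and $a=n$ survive. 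Using that the image of $v_1$ in $\elll_*$ is $E_{p-1}$, so that $\xi(\eta_R(v_1)^a)=E_{p-1}^a$ and $f_1v_1^{n-a}=f_1E_{p-1}^{n-a}$, the two surviving terms are exactly $T_{\ell}(f_1E_{p-1}^n)$ (from $a=0$) and $T_{\ell}(f_1)E_{p-1}^n$ (from $a=n$). Tracking the explicit constants and signs, these assemble into $p^{j-1-\nu_p(n)}\bigl(T_{\ell}(f_1E_{p-1}^n)-T_{\ell}(f_1)E_{p-1}^n\bigr)$, so that the bottom-cell component of $\tilde{T}_{\ell}(f_0,f_1)$ equals $T_{\ell}(f_0)+p^{j-1-\nu_p(n)}\bigl(T_{\ell}(f_1E_{p-1}^n)-T_{\ell}(f_1)E_{p-1}^n\bigr)$. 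Setting this equal to $\lambda(\ell)f_0$ yields condition \eqref{eigenform condition 2}, and the equivalence follows.

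It is worth recording why the correction term is integral over $\mathbb{Z}[P^{-1}]$, in which $p$ is \emph{not} inverted. By Proposition \ref{interpretation of Dn} the quantity $T_{\ell}(f_1E_{p-1}^n)-T_{\ell}(f_1)E_{p-1}^n$ equals $p\,D_nT_{\ell}(f_1)$, and Definition-Proposition \ref{def of delta} shows $D_nT_{\ell}(f_1)$ is divisible by $p^{\nu_p(n)}$; hence the correction equals $p^{j}\Delta_nT_{\ell}(f_1)$, which is $p$-integral since $j\leq\nu_p(n)$. Recognizing the correction as $p^j\Delta_nT_{\ell}(f_1)$, i.e.\ as the $1$-cocycle $p^j\phi^{E_{p-1}}_n$ evaluated on $f_1$, is precisely what will later identify the obstruction to extending $f_1$ from the top cell with the dot-cup product $f_1\cupdot p^j\kappa^{E_{p-1}}_n$ of Corollary \ref{main thm cor 123409}.

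The main obstacle is the binomial simplification of the double sum, and in particular the careful bookkeeping of the boundary case $a=0$: the summation range $i\geq 1$ rather than $i\geq 0$ is exactly what produces the term $T_{\ell}(f_1)E_{p-1}^n$, and getting this, the identification $\xi(\eta_R(v_1)^a)=E_{p-1}^a$, and the overall sign and power of $p$ all correct is where the delicacy lies. Everything else -- the reduction to single primes and the reading-off of the two graded components -- is a formal consequence of multiplicity one and the established Hecke-action formula.
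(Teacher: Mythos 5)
Your proposal is correct and follows essentially the same route as the paper's own proof: reduce to single primes via Theorem \ref{multiplicity one thm} and Proposition \ref{multiplicity one is convenient}, then collapse the double sum in \eqref{eq fj049344} using Lemma \ref{binomial coeffs lemma}, with only the $a=0$ boundary term and the $a=n$ term surviving. Your added observation that the correction term is $p^j\Delta_nT_{\ell}(f_1)$ and hence $p$-integral is a worthwhile clarification not made explicit in the paper's proof, but it does not change the argument.
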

\begin{proof}
Theorem \ref{multiplicity one thm} gives us that $X$ has multiplicity one for $P$ (see Definition \ref{def of multiplicity one} for the multiplicity one condition for topological Hecke eigenforms), so by Proposition \ref{multiplicity one is convenient}, if $(f_0,f_1)$ is an eigenform for the action of $\tilde{T}_{\ell}$ for each prime $\ell\in P$, then $(f_0,f_1)$ is a topological Hecke eigenform for $P$. 

From \eqref{eq fj049344} and \eqref{baker eq 2} we have that 
\begin{align}
\nonumber \tilde{T}_{\ell}(f_0,f_1) 
  &= \left( T_{\ell}(f_0)  + \sum_{a=0}^n \sum_{i=\max\{1,a\}}^n p^{j-1}\binom{n}{i}\binom{i}{i-a}\frac{(-1)^{i-a}}{n}  T_{\ell}(f_1 E_{p-1}^{n-a}) E_{p-1}^{a},T_{\ell}f_1\right) \\
\label{eq ok409409}  &= \left( T_{\ell}(f_0)  + \left(\frac{p^{j-1}}{n} \sum_{i=1}^n\binom{n}{i}(-1)^i T_{\ell}(f_1 E_{p-1}^n)\right) + \frac{p^{j-1}}{n}T_{\ell}(f_1)E_{p-1}^n,T_{\ell}f_1\right) \\
\label{eq ok409410}  &= \left( T_{\ell}(f_0)  + \frac{p^{j-1}}{n}\left(  T_{\ell}(f_1)E_{p-1}^n- T_{\ell}(f_1 E_{p-1}^n) \right),T_{\ell}f_1\right) 
\end{align}
with \eqref{eq ok409409} due to Lemma \ref{binomial coeffs lemma}, and with \eqref{eq ok409410} due to the equality $-1 = \sum_{i=1}^n \binom{n}{i} (-1)^i$, a simple consequence of expanding out $(x-1)^n$ and then substituting $1$ for $x$.
\end{proof}

\begin{corollary}\label{main thm cor 123409}
Let $j$ and $n$ be nonnegative integers with $j\geq \nu_p(n)$, and let $X$ denote the homotopy cofiber of $p^jv_1^n\alpha_1 \in \pi_{(2p-2)(n+1) - 1}(S^0)$.
Suppose we are given a finite extension $K/\mathbb{Q}$ and a set $P$ of prime numbers.
Let $f$ be a weight $k-(p-1)n$ level $1$ eigenform, over $\mathcal{O}_K[P^{-1}]$, for $T_{\ell}$ for all primes $\ell\in P$. Let $\lambda: A \rightarrow \mathcal{O}_K[P^{-1}]$ be the eigencharacter of $f$.
 Then there exists a topological Hecke eigenform on $X$ whose restriction to the top cell is $f$ if and only if the derived Hecke eigenform $f\cupdot p^j \kappa^{E_{p-1}}_n\in HH^1\left(A; M_{k}^{\lambda}\right)$ is trivial.
\end{corollary}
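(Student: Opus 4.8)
The plan is to read the statement off directly from the eigenform conditions of Theorem \ref{eigenform conditions thm}, by reinterpreting those conditions as a Hochschild coboundary condition. Under the splitting $\elll_{2k}(X)\otimes_{\mathbb{Z}}\mathcal{O}_K[P^{-1}]\cong (M_{k}\oplus M_{k-(p-1)n})\otimes_{\mathbb{Z}}\mathcal{O}_K[P^{-1}]$, a topological Hecke eigenform on $X$ restricting to $f$ on the top cell is a pair $(f_0,f)$ with $f_0$ of weight $k$. By Theorem \ref{eigenform conditions thm}, such a pair is a topological eigenform with eigencharacter $\lambda$ if and only if conditions \eqref{eigenform condition 1} and \eqref{eigenform condition 2} hold; since $f$ is assumed to be a $\lambda$-eigenform, \eqref{eigenform condition 1} is automatic, so the existence of the desired extension reduces to the existence of a weight-$k$ form $f_0\in M_k\otimes_{\mathbb{Z}}\mathcal{O}_K[P^{-1}]$ satisfying \eqref{eigenform condition 2} for all $\ell\in P$.

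Next I would unwind the cocycle representing $f\cupdot p^j\kappa^{E_{p-1}}_n$. By Definition-Proposition \ref{def of dot-cup product}, cup-then-evaluate sends the $1$-cochain $T\mapsto p^j\phi^{E_{p-1}}_n(T)=p^j\Delta_nT$ (Definition-Propositions \ref{def of kappas} and \ref{def of delta}) together with the $0$-cochain $f$ to the $1$-cochain $\psi\colon \mathbf{A}_P\to M_k^{\lambda}$ given by $\psi(T)=p^j(\Delta_nT)(f)$. Using $\mu=p$ and the identity $D_nT=\tfrac1p(TE_{p-1}^n-E_{p-1}^nT)$ from the proof of Proposition \ref{interpretation of Dn} together with $p^{\nu_p(n)}\Delta_n=D_n$, and using that $f$ is a $\lambda$-eigenform so that $E_{p-1}^nT_\ell(f)=\lambda(\ell)E_{p-1}^nf$, I obtain
$$\psi(T_\ell)=p^{j-1-\nu_p(n)}\bigl(T_\ell(E_{p-1}^nf)-\lambda(\ell)E_{p-1}^nf\bigr),$$
which is precisely the correction term appearing in \eqref{eigenform condition 2} once one sets $f_1=f$ there.

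Finally I would recall, from the discussion following Definition \ref{bimodules definition 1}, that the $1$-coboundaries in $HH^1(\mathbf{A}_P;M_k^{\lambda})$ are exactly the cochains $T\mapsto T(m)-\lambda(T)m$ for some fixed $m\in M_k\otimes_{\mathbb{Z}}\mathcal{O}_K[P^{-1}]$. Rewriting \eqref{eigenform condition 2} (with $f_1=f$) as $T_\ell(f_0)-\lambda(\ell)f_0=-\psi(T_\ell)$ shows that a solution $f_0$ exists if and only if $\psi$ is the coboundary of $-f_0$, i.e.\ if and only if $[\psi]=f\cupdot p^j\kappa^{E_{p-1}}_n$ vanishes in $HH^1(\mathbf{A}_P;M_k^{\lambda})$. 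Combined with the first paragraph, this yields the claimed equivalence.

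The main obstacle I anticipate is bookkeeping rather than anything conceptual: correctly matching the powers of $p$ (the interplay of $p^j$, $p^{\nu_p(n)}$, and the $\tfrac1p$ hidden in $D_n$), and verifying that every coboundary $\psi$ genuinely arises from an honest modular form $f_0$ over $\mathcal{O}_K[P^{-1}]$ (in which $p$ is \emph{not} inverted), with no spurious denominator. I would also reconcile the hypothesis $j\ge\nu_p(n)$ here with the hypothesis $j\le\nu_p(n)$ of Theorem \ref{eigenform conditions thm}: the two ranges overlap at $j=\nu_p(n)$, while for $j\ge 1+\nu_p(n)$ the attaching map $p^jv_1^n\alpha_1$ is nullhomotopic, so $X$ is a wedge of spheres, $p^j\kappa^{E_{p-1}}_n=0$ by Theorem \ref{nontriviality of all kappas}, and the extension $f_0=-p^{j-1-\nu_p(n)}E_{p-1}^nf$ exists unconditionally; thus the equivalence holds trivially in that range, consistently with Proposition \ref{examples of hecke eigenforms}.
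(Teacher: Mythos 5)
Your proposal is correct and follows essentially the same route as the paper: the paper's entire proof is a one-sentence appeal to ``unwinding the definitions'' to see that condition \eqref{eigenform condition 2} says exactly that the Hochschild $1$-cocycle $p^jf\cupdot\phi^{E_{p-1}}_n$ is a coboundary, which is precisely what your more detailed computation (reducing via Theorem \ref{eigenform conditions thm}, identifying the correction term with $p^j(\Delta_nT_\ell)(f)$, and matching coboundaries with choices of $f_0$) establishes. Your remark reconciling the hypothesis $j\geq\nu_p(n)$ in the statement with the hypothesis $j\leq\nu_p(n)$ of Theorem \ref{eigenform conditions thm} addresses a discrepancy the paper's proof passes over in silence, and your resolution of it is sensible.
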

\begin{proof}
Unwinding the definitions and comparing the parenthesized terms in \eqref{eigenform condition 2} to the derivation $\Delta_n$ defined in Definition-Proposition \ref{def of delta} (which was used, in turn, to define the cocycle representative $\phi_n^E$ for $\kappa^E_n$ in Definition-Proposition \ref{def of kappas}), the condition \eqref{eigenform condition 2} expresses precisely that the Hochschild $1$-cocycle $p^jf_1 \cupdot\phi^{E_{p-1}}_n$ is the coboundary of $f_0$.
\end{proof}

\begin{prop}\label{nonvanishing of dotcup 0}
Let $n$ be a nonnegative integer.
Suppose we are given a finite extension $K/\mathbb{Q}$ and a cofinite set of prime numbers $P$.
Suppose $p$ is a prime number not contained in $P$, and suppose that $f$ is a holomorphic Hecke eigenform over $\mathbb{Z}[P^{-1}]$ of level $1$ and weight $k$ with eigencharacter $\lambda: A\rightarrow \mathcal{O}_K[P^{-1}]$.
If $f$ is not divisible by $p$, then the derived Hecke eigenform $f\cupdot \kappa^{E_{p-1}}_n\in HH^1(A; M_k^{\lambda})$ has (additive) order $p^{1+\nu_p(n)}$.
\end{prop}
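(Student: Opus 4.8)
The plan is to follow the template of the proof of Theorem \ref{nontriviality of all kappas} almost verbatim, now carrying the eigenform $f$ through the computation. First I would dispose of the upper bound. By Definition-Proposition \ref{def of dot-cup product} the dot-cup product is $R$-bilinear, so
\[ p^{1+\nu_p(n)}\left( f\cupdot\kappa^{E_{p-1}}_n\right) = f\cupdot\left( p^{1+\nu_p(n)}\kappa^{E_{p-1}}_n\right) = 0, \]
where the last equality is Theorem \ref{nontriviality of all kappas}. Hence the additive order of $f\cupdot\kappa^{E_{p-1}}_n$ divides $p^{1+\nu_p(n)}$, and in particular is a power of $p$. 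It therefore suffices to prove the matching lower bound, namely that the class $p^{\nu_p(n)}\left( f\cupdot\kappa^{E_{p-1}}_n\right)$ is nonzero in $HH^1\left(\mathbf{A}_P; M_{k+(p-1)n}^{\lambda}\right)$, the single weight in which the dot-cup product actually lands (multiplication by $E_{p-1}^n$ raises weight by $(p-1)n$).

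The next step is to write down an explicit cocycle representative. Cupping the $0$-cocycle $f$ with the $1$-cocycle $\phi^{E_{p-1}}_n(T)=\Delta_nT$ of Definition-Proposition \ref{def of kappas} and applying the evaluation-factoring map $\chi$ of Definition-Proposition \ref{def of dot-cup product}, one sees that $f\cupdot\kappa^{E_{p-1}}_n$ is represented by the $1$-cocycle $T\mapsto(\Delta_nT)(f)$. Using the relation $p^{\nu_p(n)}\Delta_n=D_n$ from Definition-Proposition \ref{def of delta}, the identity $D_nT=\frac1p\left(Te^n-e^nT\right)$ extracted from the proof of Proposition \ref{interpretation of Dn}, and the eigenform relation $Tf=\lambda(T)f$, the class $p^{\nu_p(n)}\left(f\cupdot\kappa^{E_{p-1}}_n\right)$ is represented by the $1$-cocycle
\[ \psi\colon T\longmapsto \frac1p\left( T(E_{p-1}^nf) - \lambda(T)\,E_{p-1}^nf\right). \]
Writing $g=E_{p-1}^nf\in M_{k+(p-1)n}\otimes\mathcal{O}_K[P^{-1}]$, in the single-weight module $M_{k+(p-1)n}^{\lambda}$ the $1$-coboundaries are exactly the cocycles $T\mapsto Th-\lambda(T)h$ for $h\in M_{k+(p-1)n}\otimes\mathcal{O}_K[P^{-1}]$, so everything reduces to showing $\psi$ is not of this form.

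The heart of the argument is a non-coboundary statement proved by contradiction, and it is here that the one genuinely non-formal input enters. Suppose there were such an $h$ with $\psi(T)=Th-\lambda(T)h$ for all $T\in\mathbf{A}_P$. Rearranging gives $T\left(\frac{g}{p}-h\right)=\lambda(T)\left(\frac{g}{p}-h\right)$, so after extending scalars to $\overline{\mathbb{Q}}$ the element $\frac{g}{p}-h$ is a $\lambda$-eigenform of weight $k+(p-1)n$. But $f$ is already a nonzero $\lambda$-eigenform of weight $k$ (it is nonzero, being not divisible by $p$), and $k+(p-1)n\neq k$ since $n\geq 1$ and $p>3$. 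The Galois-representation-plus-Chebotarev-density argument in the proof of Proposition \ref{prop:HH0} shows that an eigencharacter determines the weight of its eigenforms (via the determinant $\det\rho(\Frob_\ell)=\ell^{\mathrm{wt}-1}$), so the weight $k+(p-1)n$ $\lambda$-eigenspace must vanish; thus $\frac{g}{p}-h=0$, i.e.\ $h=\frac{g}{p}$.

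Finally I would close with the integrality contradiction, exactly as in Theorem \ref{nontriviality of all kappas}. By Proposition \ref{prop: p-integrality} we have $E_{p-1}\equiv 1\pmod p$, hence $g=E_{p-1}^nf\equiv f\pmod p$; since $f$ is not divisible by $p$ by hypothesis, neither is $g$, and therefore $\frac{g}{p}$ does not lie in $M_{k+(p-1)n}\otimes\mathcal{O}_K[P^{-1}]$. This contradicts $h=\frac{g}{p}$ being a modular form over $\mathcal{O}_K[P^{-1}]$, so $\psi$ is not a coboundary and $p^{\nu_p(n)}\left(f\cupdot\kappa^{E_{p-1}}_n\right)\neq 0$. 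Combined with the upper bound from the first paragraph, the order is exactly $p^{1+\nu_p(n)}$. I expect the only real obstacle to be the weight-rigidity step, which I am citing from Proposition \ref{prop:HH0}; the rest is careful bookkeeping to produce the cocycle $\psi$ and to verify that non-divisibility of $f$ by $p$ is inherited by $g=E_{p-1}^nf$.
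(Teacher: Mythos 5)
Your proposal is correct and follows essentially the same route as the paper's own proof: the upper bound from Theorem \ref{nontriviality of all kappas}, the explicit cocycle representative $T\mapsto p^{j-\nu_p(n)-1}\left(T_{\ell}(E_{p-1}^nf)-\lambda(\ell)E_{p-1}^nf\right)$, the rearrangement forcing a putative cobounding $h$ to equal $p^{j-\nu_p(n)-1}E_{p-1}^nf$ via the weight-rigidity/Chebotarev argument of Proposition \ref{prop:HH0}, and the $p$-integrality contradiction from $E_{p-1}^nf\equiv f\not\equiv 0\pmod p$. The only cosmetic difference is that you specialize to $j=\nu_p(n)$ (which suffices, as you note) where the paper runs the argument for general $j$ and concludes $j\geq \nu_p(n)+1$.
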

\begin{proof}
In Theorem \ref{nontriviality of all kappas} we showed that $\kappa^{E_{p-1}}_n\in HH^1\left(A; \hom_{\mathbb{Z}[P^{-1}]}(M_{*-(p-1)n},M_{*})\right)$ is of order $p^{1+\nu_p(n)}$, so the order of $f\cupdot \kappa^{E_{p-1}}_n$ must be a divisor of $p^{1+\nu_p(n)}$. 
Suppose $p^j\left(f\cupdot \kappa^{E_{p-1}}_n\right) = 0$. 
Then $p^jf\cupdot \phi^{E_{p-1}}_n$ must be a coboundary, i.e., there must exist some holomorphic modular form $h\in M_k^{\lambda}$ such that equality \eqref{eq 0439jfj} holds in the chain of equalities
\begin{align}
 T_{\ell}(h) - \lambda(\ell)h 
  &= (dh)(T_{\ell}) \\
\label{eq 0439jfj}  &= \left(p^jf\cupdot \phi^{E_{p-1}}_n\right)(T_{\ell}) \\
\nonumber  &= p^{j-\nu_p(n)-1} \left( T_{\ell}(E_{p-1}^nf) - E_{p-1}^n T_{\ell}(f)\right) \\
\nonumber  &= p^{j-\nu_p(n)-1} \left( T_{\ell}(E_{p-1}^nf) - E_{p-1}^n \lambda_{\ell}f\right) ,\mbox{\ \ \ i.e.,}\\
\label{eq 0439jfk} T_{\ell}(h - p^{j-\nu_p(n)-1}E_{p-1}^nf) &= \lambda(\ell) \left( h - p^{j-\nu_p(n)-1}E_{p-1}^n f\right).
\end{align}
Equality \eqref{eq 0439jfk} establishes that $h - p^jE_{p-1}^nf$ and $f$ are each Hecke eigenforms (for all primes in the cofinite set $P$) of level $1$ and of weights $k$ and $k-2(p-1)n$, respectively. The density argument from Proposition \ref{prop:HH0} consequently implies that either $f=0$ or $h - p^{j-\nu_p(n)-1}E_{p-1}^nf=0$. Since we assumed that $f$ is not divisible by $p$, we cannot have $f=0$. So we must have $h = p^{j-\nu_p(n)-1}E_{p-1}^nf$. Now $f$ is not divisible by $p$, and $E_{p-1}^n\equiv 1$ modulo positive degree terms in its $q$-expansion, so $fE_{p-1}^n$  is also not divisible by $p$, i.e., $fE_{p-1}^n$ is not equal to a positive power of $p$ times any modular form, in particular, $h$. So $j-\nu_p(n)-1\geq 0$, i.e., $j\geq \nu_p(n)+1$. So the only way for the cohomology class $p^j\left(f\cupdot \kappa^{E_{p-1}}_n\right)$ to be zero is for $j$ to be at least $\nu_p(n)+1$, i.e., the order of $f\cupdot \kappa^{E_{p-1}}_n$ is $\nu_p(n)+1$.
\end{proof}

We are now ready to give a complete description of the topological Hecke eigenforms over $2$-cell complexes whose attaching map has $BP$-Adams degree $1$. In Theorem \ref{main thm 304jf}, the elliptic homology $\elll_{*}(X)$ vanishes in odd degrees, so the description given of topological Hecke eigenforms over $X$ in even degrees is indeed a description of all topological Hecke eigenforms over $X$.
\begin{theorem}\label{main thm 304jf}
Let $n$ be a nonnegative integer.
Suppose we are given a cofinite set of prime numbers $P$. Let $p$ be an odd prime, let $k$ be an integer, and let $X$ be the cofiber of $p^jv_1^n\alpha_1$ for some integer $j$. 
If $j > \nu_p(n)$, then $p^jv_1^n\alpha_1\in \pi_{2(p-1)n-1}(S^0)$ is nulhomotopic, so $X$ is stably homotopy equivalent to a wedge of spheres, and so its topological Hecke eigenforms are described by Proposition \ref{examples of hecke eigenforms}. If $j\leq \nu_p(n)$, then the topological Hecke eigenforms over $X$ are of exactly two types:
\begin{description}
\item[Topological Hecke eigenforms supported on the bottom cell] under the splitting 
\[\elll_{2k}(X)\otimes_{\mathbb{Z}}\mathcal{O}_{\Qbar}[P^{-1}]
\cong (M_k \oplus M_{k-(p-1)n}) \otimes_{\mathbb{Z}}\mathcal{O}_{\Qbar}[P^{-1}],\] 
the topological Hecke eigenforms in $\elll_{2k}(X)\otimes_{\mathbb{Z}}\mathcal{O}_{\Qbar}[P^{-1}]$ corresponding to an element $(f,0)\in (M_k \oplus M_{k-(p-1)n}) \otimes_{\mathbb{Z}}\mathcal{O}_{\Qbar}[P^{-1}]$ with $f$ a classical Hecke eigenform; and
\item[Topological Hecke eigenforms nontrivial on the top cell] 
topological Hecke eigenforms of the form $(-p^{j-1-\nu_p(n)}gE_{p-1}^n,g)\in (M_k \oplus M_{k-(p-1)n}) \otimes_{\mathbb{Z}}\mathcal{O}_{\overline{\mathbb{Q}}}[P^{-1}]$ with $g$ a classical Hecke eigenform divisible by $p^{1+\nu_p(n)-j}$.
\end{description}
\end{theorem}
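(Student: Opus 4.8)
The plan is to assemble this theorem from the machinery already in place, treating it essentially as a bookkeeping exercise on top of Theorem \ref{eigenform conditions thm}, Corollary \ref{main thm cor 123409}, Proposition \ref{nonvanishing of dotcup 0}, and the density argument of Proposition \ref{prop:HH0}. First I would dispose of the case $j>\nu_p(n)$: the element $v_1^n\alpha_1$ generates the cyclic group of order $p^{1+\nu_p(n)}$ computed in \eqref{novikov 1-line}, so $p^jv_1^n\alpha_1$ is nulhomotopic precisely when $j\geq 1+\nu_p(n)$, i.e.\ when $j>\nu_p(n)$; in that range $X$ is stably a wedge of two spheres and its topological Hecke eigenforms are given by Proposition \ref{examples of hecke eigenforms}. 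So the substance of the theorem lies entirely in the range $j\leq\nu_p(n)$.

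In that range, $X$ has one cell in each dimension and torsion-free homology, so Theorem \ref{multiplicity one thm} gives multiplicity one for $P$, and Proposition \ref{multiplicity one is convenient} then reduces the problem to classifying the pairs $(f_0,f_1)\in (M_k\oplus M_{k-(p-1)n})\otimes\mathcal{O}_{\Qbar}[P^{-1}]$ that are eigenforms for $\tilde T_\ell$ for every $\ell\in P$. By Theorem \ref{eigenform conditions thm} these are exactly the pairs satisfying \eqref{eigenform condition 1} and \eqref{eigenform condition 2}. I would then split into two cases according to whether $f_1$ vanishes. If $f_1=0$, condition \eqref{eigenform condition 2} collapses to $T_\ell(f_0)=\lambda(\ell)f_0$, so $f_0$ is an ordinary weight-$k$ Hecke eigenform; this yields exactly the eigenforms supported on the bottom cell.

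If $f_1\neq 0$, then \eqref{eigenform condition 1} forces $f_1=g$ to be a classical eigenform of weight $k-(p-1)n$, and Corollary \ref{main thm cor 123409} identifies the existence of a compatible $f_0$ with the vanishing of $g\cupdot p^j\kappa^{E_{p-1}}_n$ in $HH^1(A;M_k^\lambda)$. Writing $g=p^a g'$ with $p\nmid g'$, Proposition \ref{nonvanishing of dotcup 0} says $g'\cupdot\kappa^{E_{p-1}}_n$ has order $p^{1+\nu_p(n)}$, so the vanishing condition is exactly $a+j\geq 1+\nu_p(n)$, i.e.\ $p^{1+\nu_p(n)-j}\mid g$. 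I would then exhibit $f_0=-p^{\,j-1-\nu_p(n)}gE_{p-1}^n$ as a solution of \eqref{eigenform condition 2}: the rearranged condition reads $T_\ell(f_0)-\lambda(\ell)f_0=-p^{\,j-1-\nu_p(n)}\bigl(T_\ell(gE_{p-1}^n)-\lambda(\ell)gE_{p-1}^n\bigr)$, which this $f_0$ satisfies by inspection, and the divisibility of $g$ together with $E_{p-1}^n\equiv 1$ modulo positive-degree terms makes $f_0$ integral. These are the eigenforms nontrivial on the top cell.

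The step I expect to require the most care, and which secures the word ``exactly'' in the statement, is the uniqueness of $f_0$ given $g$ and the mutual exclusivity of the two types: any two choices of $f_0$ differ by a weight-$k$ eigenform with eigencharacter $\lambda$, whereas $g$ is a nonzero eigenform of the \emph{different} weight $k-(p-1)n$ with the same $\lambda$, so the density/Galois argument of Proposition \ref{prop:HH0} rules out any nonzero weight-$k$ eigenform of eigencharacter $\lambda$. This forces $f_0$ to equal $-p^{\,j-1-\nu_p(n)}gE_{p-1}^n$ exactly and simultaneously prevents a bottom-cell (type one) eigenform from sharing the eigencharacter of a top-cell (type two) eigenform, so that multiplicity one is respected and the two families together exhaust all topological Hecke eigenforms over $X$.
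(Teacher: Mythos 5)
Your proposal is correct and follows essentially the same route as the paper's proof: it reduces to the eigenform conditions of Theorem \ref{eigenform conditions thm}, converts the extension question into vanishing of $g\cupdot p^j\kappa^{E_{p-1}}_n$ via Corollary \ref{main thm cor 123409}, extracts the divisibility criterion from Proposition \ref{nonvanishing of dotcup 0}, and pins down $f_0=-p^{\,j-1-\nu_p(n)}gE_{p-1}^n$ by the Chebotarev density argument of Proposition \ref{prop:HH0}. Your explicit handling of the $j>\nu_p(n)$ case, the $f_1=0$ case, and the uniqueness of $f_0$ only makes more visible steps the paper leaves implicit.
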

\begin{proof} 
Suppose $(f,g)\in (M_k \oplus M_{k-(p-1)n}) \otimes_{\mathbb{Z}}\mathcal{O}_{\Qbar}[P^{-1}]$ defines a topological Hecke eigenform over $X$, with $g$ nonzero. By Theorem \ref{eigenform conditions thm}, we must have that $g$ is a classical Hecke eigenform, i.e., $g\in HH^0(A; M_{k-(p-1)n}^{\lambda})$ where $\lambda$ is the eigencharacter of $g$. By Corollary \ref{main thm cor 123409}, we must have that the derived Hecke eigenform $g\cupdot p^j\kappa^{E_{p-1}}_n\in HH^1(A; M_k^{\lambda})$ vanishes. By Proposition \ref{nonvanishing of dotcup 0}, we have $g\cupdot p^j\kappa^{E_{p-1}}_n=0$ if and only if $g$ is divisible by $p^{1+\nu_p(n)-j}$.
However, if $g$ is divisible by $p^{1+\nu_p(n)-j}$, then \eqref{eigenform condition 2} reads
\begin{align*}
 T_{\ell}(f) + p^{j-1-\nu_p(n)} \left(T_{\ell}(gE_{p-1}^n) -  \lambda_{\ell}(g)E_{p-1}^n \right)&= \lambda(\ell)f,\mbox{\ \ \ i.e.,}\\
 T_{\ell}(f + p^{j-1-\nu_p(n)}gE_{p-1}^n) &= \lambda(\ell)(f + p^{j-1-\nu_p(n)}gE_{p-1}^n),
\end{align*}
i.e., $g$ and $f + p^{j-1-\nu_p(n)}gE_{p-1}^n$ are Hecke eigenforms over $\mathbb{Z}[P^{-1}]$ of the same level and eigencharacter but with differing weights. The density argument of Proposition \ref{prop:HH0} gives us that either $g$ is zero (which is false by assumption) or $f + p^{j-1-\nu_p(n)}gE_{p-1}^n$ is zero, i.e., $f = -p^{j-1-\nu_p(n)}gE_{p-1}^n$.
\end{proof}

\begin{remark}
It seems extremely plausible that the comparison of the Hochschild cohomology $HH^1(A; \hom_{\mathbb{Z}[P^{-1}]}(M,M))\cong \Ext^1_A(M,M)$ of the ring of modular forms to the Adams-Novikov $1$-line, carried out in this paper, is related to the results of \cite{MR1692001} by some kind of global duality which would presumably exchange Hecke invariants in $\Ell_*(S^0)\otimes_{\mathbb{Z}} \mathbb{Q}/\mathbb{Z}$ and derived Hecke eigenforms in $\Ell_*(S^0)$. In \cite{MR1692001}, Baker calculates fixed points of the Hecke action on $\Ell_*(S^0)\otimes_{\mathbb{Z}}\mathbb{Q}/\mathbb{Z}$ in order to get the coinvariants $\Ext^{0,*}_{\Ell_*\Ell-comod}\left(\Ell_*(S^0),\Ell_*(S^0)\otimes_{\mathbb{Z}}\mathbb{Q}/\mathbb{Z}\right)$ of the coaction of the Hopf algebroid $(\Ell_*(S^0),\Ell_*\Ell)$ of stable co-operations in weakly holomorphic elliptic homology on $\Ell_*(S^0)\otimes_{\mathbb{Z}}\mathbb{Q}/\mathbb{Z}$. The long exact sequence in comodule $\Ext$ induced by the extension 
\[ 0 \rightarrow \Ell_*(S^0) \rightarrow \Ell_*(S^0)\otimes_{\mathbb{Z}}\mathbb{Q} \rightarrow \Ell_*(S^0)\otimes_{\mathbb{Z}}\mathbb{Q}/\mathbb{Z} \rightarrow 0\]
then lets Baker calculate the $1$-line $\Ext^{1,*}_{\Ell_*\Ell-comod}\left(\Ell_*(S^0),\Ell_*(S^0)\right)$ in the $\Ell$-Adams spectral sequence $\Ext^{s,t}_{\Ell_*\Ell-comod}\left(\Ell_*(S^0),\Ell_*(S^0)\right) \Rightarrow \pi_{t-s}(L_{\Ell}S)$, deducing that the orders of the groups in the $\Ell$-Adams $1$-line agree with the orders of the groups in the $MU[\frac{1}{6}]$-Adams $1$-line (i.e., the $6$-inverted Adams-Novikov $1$-line). Baker's approach does not, however, result in a description of the effect of Hecke operators on cofibers of elements in the Adams-Novikov $1$-line, or the topological Hecke eigenforms over such cofibers; so the results of \cite{MR1692001} do not overlap greatly with the results of the present paper, although we think it is likely that some of the results of the present paper could also be proven by developing some appropriate kind of duality and applying it to the results of \cite{MR1692001}. Such an approach would not result in the connection to the Hochschild-cohomological ``derived eigentheory'' from \cref{Abstract calculus....} that we establish in Corollary \ref{main thm cor 123409}, however.
\end{remark}

\subsection{The spectral sequence for derived topological Hecke eigenforms.}

Let $X$ be a topological space, let $P$ be a set of primes, and let $\lambda: \mathbf{A}_P \rightarrow \mathcal{O}_K[P^{-1}]$ be an eigencharacter. Combining Definition \ref{def of eigenform} and the definition of a derived Hecke eigenform immediately following Definition \ref{bimodules definition 1}, by a {\em derived topological Hecke eigenform on $X$ with eigencharacter $\lambda$} we mean an element of 
\[ HH^*\left(\mathbf{A}_P; (\elll_*(X)\otimes_{\mathbb{Z}}\mathcal{O}_K[P^{-1}])^{\lambda}\right).\] Then, for example, $HH^0\left(\mathbf{A}_P; (\elll_*(X)\otimes_{\mathbb{Z}}\mathcal{O}_K[P^{-1}])^{\lambda}\right)$ coincides with the (non-derived) topological Hecke eigenforms over $X$ with eigencharacter $\lambda$.

We have a spectral sequence which goes from derived Hecke eigenforms to derived topological Hecke eigenforms:
\begin{theorem} 
There exists a trigraded spectral sequence
\begin{align}
\label{sseq 04394} E_1^{s,t,k} \cong H_t(X; \mathbb{Z})\otimes_{\mathbb{Z}} HH^s(\mathbf{A}_P; M_k^{\lambda}) &\Rightarrow HH^s\left(\mathbf{A}_P; (\elll_k(X)\otimes_{\mathbb{Z}}\mathcal{O}_K[P^{-1}])^{\lambda}\right)\\
\nonumber d_r: E_r^{s,t,k} &\rightarrow E_r^{s+1,t-r,k+r}
\end{align}
which is strongly convergent if $X$ is a finite-dimensional CW-complex.
\end{theorem}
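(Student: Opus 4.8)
The plan is to realize the spectral sequence as the spectral sequence of a finite, Hecke-equivariant filtration on a cochain complex computing the target Hochschild cohomology. First I would equip $\elll_*(X)\otimes_{\mathbb{Z}}\mathcal{O}_K[P^{-1}]$ with the increasing filtration $F_t = \im\!\left(\elll_*(X^t)\to\elll_*(X)\right)\otimes_{\mathbb{Z}}\mathcal{O}_K[P^{-1}]$ coming from the skeletal filtration $X^0\subseteq X^1\subseteq\cdots\subseteq X^N = X$ of the finite-dimensional complex $X$. By Observation \ref{hecke action observation} the topological Hecke operators respect this filtration (they act upper-triangularly with respect to it), so each $F_t$ is an $\mathbf{A}_P$-submodule, and the associated graded piece $F_t/F_{t-1}$ is the $s=t$ line of the homological Atiyah--Hirzebruch spectral sequence \eqref{homological ahss}, namely $\tilde H_t(X;\mathbb{Z})\otimes_{\mathbb{Z}}\elll_{*-t}(S^0)$ equipped with the diagonal block-sum Hecke action described there. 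In particular, as an $\mathbf{A}_P$-module, $F_t/F_{t-1}$ is a finite direct sum of copies of the graded ring $M_*$ indexed by a basis of $\tilde H_t(X;\mathbb{Z})$.

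Next I would record the crucial \emph{relative} splitting. Each short exact sequence $0\to (F_{t-1})^{\lambda}\to (F_t)^{\lambda}\to (F_t/F_{t-1})^{\lambda}\to 0$ of $\mathbf{A}_P$-bimodules is a short exact sequence of $\mathbb{Z}[P^{-1}]$-modules with free, hence projective, cokernel, by Proposition \ref{ahss splitting}; thus it is $\mathbb{Z}[P^{-1}]$-split and so lies in the allowable class defining relative Hochschild cohomology. Fixing a single $\mathbb{Z}[P^{-1}]$-relative projective resolution $P_\bullet\to\mathbf{A}_P$ of $\mathbf{A}_P$ as an $\mathbf{A}_P^{\epsilon}$-module (for instance the reduced bar resolution, or the explicit resolution \eqref{resolution 34091}), one has $HH^s(\mathbf{A}_P;(-)^{\lambda})\cong H^s\hom_{\mathbf{A}_P^{\epsilon}}(P_\bullet,(-)^{\lambda})$. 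Since each $P_i$ is relatively projective, $\hom_{\mathbf{A}_P^{\epsilon}}(P_i,(-)^{\lambda})$ is exact on $\mathbb{Z}[P^{-1}]$-split sequences; applying it to the filtration therefore produces an increasing filtration $F_tC^{\bullet}:=\hom_{\mathbf{A}_P^{\epsilon}}(P_\bullet,(F_t)^{\lambda})$ of the cochain complex $C^{\bullet}=\hom_{\mathbf{A}_P^{\epsilon}}(P_\bullet,(\elll_*(X)\otimes_{\mathbb{Z}}\mathcal{O}_K[P^{-1}])^{\lambda})$ by subcomplexes, whose subquotients compute the Hochschild cohomology of the graded pieces.

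The spectral sequence of this filtered cochain complex is then the desired object. Its $E_1$-term is the cohomology of the associated graded, $HH^s(\mathbf{A}_P;(F_t/F_{t-1})^{\lambda})$; because $\tilde H_t(X;\mathbb{Z})$ is free of finite rank and all functors in sight are $\mathbb{Z}[P^{-1}]$-linear, Hochschild cohomology commutes with this finite direct sum, giving $E_1^{s,t,k}\cong \tilde H_t(X;\mathbb{Z})\otimes_{\mathbb{Z}} HH^s(\mathbf{A}_P;M_k^{\lambda})$, where the third index $k$ records the elliptic-homology degree of the modular-form coefficient, so that the total elliptic degree $t+k$ is the abutment grading and is preserved by all differentials. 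The $d_1$-differential is the connecting homomorphism of the long exact Hochschild-cohomology sequence of the pair $(F_t,F_{t-1})$, which raises $s$ by one and lowers $t$ by one; in general $d_r$ lowers $t$ by $r$, raises $s$ by one, and (by conservation of $t+k$) raises $k$ by $r$, exactly as stated. Strong convergence is then automatic: finite-dimensionality of $X$ makes the filtration bounded and exhaustive, so the spectral sequence of the finite filtration converges to $H^*(C^{\bullet})=HH^*(\mathbf{A}_P;(\elll_*(X)\otimes_{\mathbb{Z}}\mathcal{O}_K[P^{-1}])^{\lambda})$.

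The main obstacle I expect is precisely the relative-homological bookkeeping of the middle paragraph: one must work with \emph{relative} rather than absolute Hochschild cohomology throughout and verify that the skeletal-filtration short exact sequences are $\mathbb{Z}[P^{-1}]$-split, so that a single fixed relative projective resolution can be applied filtration-wise and convert the filtered module into a filtered complex. The feature distinguishing this from a naive Atiyah--Hirzebruch argument, and the point that forces the ``$s\mapsto s+1$'' in the differential, is that $HH^s$ is not exact, so the graded pieces are linked by connecting homomorphisms that raise Hochschild degree; confirming that these assemble into the $d_1$ of a genuine spectral sequence (equivalently, that the exact couple built from the long exact sequences of the pairs $(F_t,F_{t-1})$ carries the claimed trigrading) is where the care is required.
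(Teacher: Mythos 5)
Your proposal is correct and follows essentially the same route as the paper: filter $\elll_*(X)\otimes_{\mathbb{Z}}\mathcal{O}_K[P^{-1}]$ by skeleta, note via Observation \ref{hecke action observation} that the Hecke action preserves the filtration, apply the Hochschild cochain complex (the paper's ``cyclic cobar complex'' is the complex obtained from your bar resolution) to get a filtered cochain complex, and take its spectral sequence. Your additional care about $\mathbb{Z}[P^{-1}]$-splitness of the filtration steps and the identification of the $E_1$-term just fills in details the paper leaves implicit.
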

\begin{proof}
Equip $\left(\elll_*(X)\otimes_{\mathbb{Z}}\mathcal{O}_K[P^{-1}]\right)^{\lambda}$ with the skeletal (i.e., Atiyah-Hirzebruch) filtration 
\begin{equation}\label{filt bimodule 2}\left(\elll_*(X^0)\otimes_{\mathbb{Z}}\mathcal{O}_K[P^{-1}]\right)^{\lambda}\subseteq \left(\elll_*(X^1)\otimes_{\mathbb{Z}}\mathcal{O}_K[P^{-1}]\right)^{\lambda}\subseteq \left(\elll_*(X^2)\otimes_{\mathbb{Z}}\mathcal{O}_K[P^{-1}]\right)^{\lambda}\subseteq \dots\end{equation} and then take the cyclic cobar complex of $\mathbf{A}_P$ with coefficients in each of these bimodules \eqref{filt bimodule 2}, to get a filtered cochain complex whose cohomology is \[ HH^*\left(\mathbf{A}_P; (\elll_k(X)\otimes_{\mathbb{Z}}\mathcal{O}_K[P^{-1}])^{\lambda}\right).\] and whose associated graded is \[ H_*(X;\mathbb{Z})\otimes_{\mathbb{Z}}HH^*\left(\mathbf{A}_P; (\elll_*(S^0)\otimes_{\mathbb{Z}}\mathcal{O}_K[P^{-1}])^{\lambda}\right).\] Spectral sequence \eqref{sseq 04394} is simply the spectral sequence of this filtered cochain complex.
\end{proof}

In the case where $X$ is the two-cell complex $\cof p^jv_1^n\alpha_1$, spectral sequence \eqref{sseq 04394} is concentrated on the $t=0$ and $t=2n(p-1)$ lines already at the $E_1$-term, and so the only possible nonzero differentials are $d_{2n(p-1)}$-differentials. Consequently the spectral sequence degenerates to a Gysin-type long exact sequence
\begin{equation*}
\xymatrix{
  0 \rightarrow HH^0(A; M_k^{\lambda}) \ar[r] & 
  HH^0(A;\elll_k(X)^{\lambda}) \ar[r] &
  HH^0(A; M_{k-2n(p-1)}^{\lambda}) \ar`r_l[ll] `l[dll]^{d_{2n(p-1)}} [dll] \\
  HH^1(A; M_k^{\lambda}) \ar[r] &
  HH^1(A;\elll_k(X)^{\lambda}) \ar[r] &
  HH^1(A; M_{k-2n(p-1)}^{\lambda}) \ar`r_l[ll] `l[dll]^{d_{2n(p-1)}} [dll] \\
  HH^2(A; M_k^{\lambda}) \ar[r] &
  HH^2(A;\elll_k(X)^{\lambda})  \ar[r] &
  \dots }\end{equation*}
for each integer $k$.
It follows from Corollary \ref{main thm cor 123409} that the differential 
\[ HH^0(A; M_{k-2n(p-1)}^{\lambda}) \stackrel{d_{2n(p-1)}}{\longrightarrow} HH^1(A; M_k^{\lambda})\] is simply the dot-cup product $f \mapsto f\cupdot p^j \kappa^{E_{p-1}}_n$. Proposition \ref{nonvanishing of dotcup 0} then establishes that this differential sends an additive generator of $HH^0(A; M_{k-2n(p-1)}^{\lambda})$ to an element of order $p^{1+\nu_p(n)}$ in $HH^1(A; M_k^{\lambda})$.

We have not tried to determine whether the higher differentials in this spectral sequence are also given by the dot-cup product with $p^j \kappa^{E_{p-1}}_n$.

\appendix
\section{Appendix on cobar complexes.}
\label{appendix on cobar complexes}

In this paper, we occasionally must refer to cocycles in the cobar complex of a Hopf algebroid $(A,\Gamma)$ with coefficients in a left $\Gamma$-comodule $M$. The standard reference for Hopf algebroids is Appendix 1 of \cite{MR860042}, and the (two-sided) cobar complex of $(A,\Gamma)$ is defined in Definition A1.2.11 of \cite{MR860042}. For convenience, we recall the one-sided version of that definition here, as well as a simplification that occurs when the coefficient comodule $M$ is $A$ itself.
\begin{definition}
Let $(A,\Gamma)$ be a Hopf algebroid with left unit map $\eta_L:A \rightarrow \Gamma$, right unit map $\eta_R:A \rightarrow \Gamma$, and coproduct $\Delta: \Gamma\rightarrow \Gamma\otimes_A \Gamma$. Let $M$ be a left $\Gamma$-comodule with structure map $\psi: M \rightarrow \Gamma\otimes_A M$. 
Then the (one-sided) cobar complex of $(A,\Gamma)$ with coefficients in $M$ is the Moore complex\footnote{The ``Moore complex'' of a cosimplicial abelian group $X^{\bullet}$ is the alternating sign cochain complex of $X^{\bullet}$, i.e., the cochain complex whose group of $n$-cochains is $X^n$ and whose differential $X^n \rightarrow X^{n+1}$ is the alternating sum of the coface maps $X^n \rightarrow X^{n+1}$ in $X^{\bullet}$.}
\begin{equation}\label{cosimp 4039} C^{\bullet} = \left( \xymatrix{ 
M \ar@<1ex>[r] \ar@<-1ex>[r] & 
 \Gamma\otimes_A M  \ar[l]\ar@<2ex>[r]\ar[r]\ar@<-2ex>[r] & 
 \Gamma\otimes_A\Gamma\otimes_A M  \ar@<1ex>[l] \ar@<-1ex>[l] \ar@<3ex>[r] \ar@<1ex>[r] \ar@<-1ex>[r] \ar@<-3ex>[r] & \dots \ar@<2ex>[l]\ar[l]\ar@<-2ex>[l] , }\right)\end{equation}
whose $0$th coface map $C^n \rightarrow C^{n+1}$ is $\Delta\otimes_A \id_{\Gamma}\otimes_A \id_{\Gamma}\otimes_A \dots \otimes_A \id_{\Gamma}\otimes_A \id_M$,
whose $1$st coface map $C^n \rightarrow C^{n+1}$ is $\id_{\Gamma}\otimes_A\Delta\otimes_A\id_{\Gamma}\otimes_A \dots \otimes_A \id_{\Gamma}\otimes_A \id_M$, and so on, up through its
$(n-1)$st coface map $C^n \rightarrow C^{n+1}$ given by $\id_{\Gamma}\otimes_A \id_{\Gamma}\otimes_A \dots \otimes_A \id_{\Gamma}\otimes_A\Delta\otimes_A \id_M$,
and with its last (that is, $n$th) coface map $C^n \rightarrow C^{n+1}$ given by $\id_{\Gamma}\otimes_A \dots \otimes_A \id_{\Gamma}\otimes_A \psi$.

When $M = A$, the above simplifies, and \eqref{cosimp 4039} is isomorphic to the cosimplicial abelian group
\begin{equation}\label{cosimp 4039a} \tilde{C}^{\bullet} = \left( \xymatrix{ 
A \ar@<1ex>[r] \ar@<-1ex>[r] & 
 \Gamma  \ar[l]\ar@<2ex>[r]\ar[r]\ar@<-2ex>[r] & 
 \Gamma\otimes_A\Gamma  \ar@<1ex>[l] \ar@<-1ex>[l] \ar@<3ex>[r] \ar@<1ex>[r] \ar@<-1ex>[r] \ar@<-3ex>[r] & \dots \ar@<2ex>[l]\ar[l]\ar@<-2ex>[l] , }\right)\end{equation}
whose $0$th coface map $\tilde{C}^0 \rightarrow\tilde{C}^1$ is $\eta_R$, and 
whose $1$st coface map $\tilde{C}^0 \rightarrow\tilde{C}^1$ is $\eta_L$;
and, when $n>0$, whose $0$th coface map $\tilde{C}^n \rightarrow \tilde{C}^{n+1}$ sends $x_1\otimes \dots \otimes x_n$ to $1\otimes x_1\otimes \dots \otimes x_n$, whose $n$th coface map $\tilde{C}^n \rightarrow \tilde{C}^{n+1}$ sends $x_1\otimes \dots \otimes x_n$ to $x_1\otimes \dots \otimes x_n\otimes 1$, and whose intermediate coface maps $\tilde{C}^n \rightarrow \tilde{C}^{n+1}$ are given by letting the $1$st coface map be $\Delta \otimes_A \id_{\Gamma}\otimes_A \dots \otimes_A \id_{\Gamma}$, letting the $2$nd coface map be $\id_{\Gamma}\otimes_A\Delta \otimes_A \id_{\Gamma}\otimes_A \dots \otimes_A \id_{\Gamma}$, and so on.
\end{definition}
The above conventions are standard, and agree with the more well-known standard conventions when working with Hopf algebras. When working with Hopf algebroids, an extra wrinkle is introduced by having two unit maps: the convention when working with Hopf algebroids is that, when we tensor $\Gamma$ over $A$ on the left, we use the $A$-module structure on $\Gamma$ given by the left unit map $\eta_L: A\rightarrow \Gamma$, and when we tensor $\Gamma$ over $A$ on the right, we use the $A$-module structure on $\Gamma$ given by the right unit map $\eta_R:A\rightarrow \Gamma$. So, for example, since $\eta_L(v_1) = v_1$ and $\eta_R(v_1) = v_1 + pt_1$ in the Hopf algebroid $(BP_*,BP_*BP)$ of stable co-operations in Brown-Peterson homology, we have that \[ v_1 \otimes t_1 = (\eta_R(v_1) - pt_1)\otimes t_1 = 1\otimes v_1t_1\ -\ pt_1 \otimes t_1\in BP_*BP\otimes_{BP_*}BP_*BP\cong \tilde{C}^2.\] With these conventions in place, it is an exercise to verify that \eqref{cocycle rep 10943} is a $1$-cocycle in $\tilde{C}^{\bullet}$.


\end{document}